\definecolor{darkgreen}{rgb}{0,0.5,0}
\definecolor{darkblue}{rgb}{0,0,0.7}
\definecolor{darkred}{rgb}{0.9,0.1,0.1}
\definecolor{lightblue}{rgb}{0,0.51,1}
\newtheorem{theorem}{Theorem}
\newtheorem{proposition}{Proposition}
\newtheorem{lemma}[proposition]{Lemma}
\newtheorem{corollary}[proposition]{Corollary}
\theoremstyle{remark}
\newtheorem{remark}[proposition]{Remark}
\theoremstyle{definition}
\newtheorem{definition}[proposition]{Definition}
\numberwithin{equation}{section}
\numberwithin{proposition}{section}
\newcommand{\Z}{\mathbb{Z}}
\newcommand{\N}{\mathbb{N}}
\newcommand{\R}{\mathbb{R}}
\renewcommand{\P}{\mathcal{P}}
\newcommand{\T}{\mathcal{T}}
\renewcommand{\H}{\mathcal{H}}
\newcommand{\Rd}{{\mathbb{R}^d}}
\newcommand{\Zd}{{\mathbb{Z}^d}}
\newcommand{\ep}{\varepsilon}
\renewcommand{\a}{\mathbf{a}}
\renewcommand{\b}{\mathbf{b}}
\renewcommand{\subset}{\subseteq}
\newcommand{\cu}{{\scaleobj{1.2}{\square}}}
\DeclareMathOperator{\dist}{dist}
\DeclareMathOperator*{\essinf}{ess\,inf}
\DeclareMathOperator{\diam}{diam}
\DeclareMathOperator{\supp}{supp}
\DeclareMathOperator{\size}{size}
\DeclareMathOperator{\Idd}{I}
\renewcommand{\bar}{\overline}
\renewcommand{\tilde}{\widetilde}
\newcommand{\indc}{\mathds{1}}
\newcommand{\A}{A} 
\begin{document}

\title[Boundary layers in periodic homogenization]{Quantitative analysis of boundary layers in periodic homogenization}

\begin{abstract}
We prove quantitative estimates on the rate of convergence for the oscillating Dirichlet problem in periodic homogenization of divergence-form uniformly elliptic systems. The estimates are optimal in dimensions larger than three and new in every dimension. We also prove a  regularity estimate on the homogenized boundary condition. 
\end{abstract}

\author[S. Armstrong]{Scott Armstrong}
\address[S. Armstrong]{Universit\'e Paris-Dauphine, PSL Research University, CNRS, UMR [7534], CEREMADE, 75016 Paris, France}
\email{armstrong@ceremade.dauphine.fr}

\author[T. Kuusi]{Tuomo Kuusi}
\address[T. Kuusi]{Department of Mathematics and Systems Analysis, Aalto University, Finland}
\email{tuomo.kuusi@aalto.fi}

\author[J.-C. Mourrat]{Jean-Christophe Mourrat}
\address[J.-C. Mourrat]{Ecole normale sup\'erieure de Lyon, CNRS, UMR [5669], UMPA, Lyon, France}
\email{jean-christophe.mourrat@ens-lyon.fr}

\author[C. Prange]{Christophe Prange}
\address[C. Prange]{Universit\'e de Bordeaux, CNRS, UMR [5251], IMB, Bordeaux, France}
\email{christophe.prange@math.u-bordeaux.fr}

\keywords{}
\subjclass[2010]{}
\date{\today}

\maketitle

\section{Introduction}

\subsection{Motivation and statement of results}

We consider the oscillating Dirichlet problem for uniformly elliptic systems with periodic coefficients, taking the form
\begin{equation}
\label{e.oscDP}
\left\{ 
\begin{aligned}
& -\nabla \cdot\left( \a\left(\frac x\ep \right) \nabla u^\ep(x) \right) = 0   & \mbox{in} & \ \Omega, \\
& u^\ep(x) = g\left( x, \frac x\ep \right)  & \mbox{on} & \ \partial \Omega. 
\end{aligned}
\right.
\end{equation}
Here $\ep>0$ is a small parameter, the dimension $d\geq 2$ and  
\begin{equation}
\label{e.Omega}
\Omega\subseteq\Rd \quad \mbox{is a smooth, bounded, uniformly convex domain.}
\end{equation}
The coefficients are given by a tensor $\a=\left( \a^{\alpha\beta}_{ij}\right)_{i,j=1,\ldots,L}^{\alpha,\beta=1,\ldots,d}$ and the unknown function $u^\ep = (u^{\ep}_j)_{j=1,\ldots,L}$ takes values in $\R^L$, so that the system in~\eqref{e.oscDP} can be written in coordinates as
\begin{equation*}
-\sum_{j=1}^L\sum_{\alpha,\beta=1}^d \partial_\beta \left( \a^{\alpha\beta}_{ij} \left( \frac \cdot\ep \right) \partial_\alpha u^{\ep}_j \right) = 0 \quad \mbox{in} \ \Omega, \quad \forall i \in \{ 1,\ldots,L \}.
\end{equation*}
The coefficients are assumed to satisfy, for some fixed constant $\lambda\in (0,1)$, the uniformly elliptic condition
\begin{equation}
\label{e.unifellip}
\lambda \left| \xi \right|^2 \leq \a^{\alpha\beta}_{ij}(y) \xi^\alpha_i \xi^\beta_j \leq \lambda^{-1}\left| \xi \right|^2 \quad \forall \xi = (\xi^\alpha_i) \in \R^d \times \R^L, \, y\in\Rd. 
\end{equation}
Both $\a(\cdot)$ and the Dirichlet boundary condition $g:\partial \Omega\times\Rd \to \R$ are assumed to be smooth functions,
\begin{equation}
\label{e.smooth}
\a \in C^\infty\left( \Rd ; \R^{L\times L\times d \times d} \right)
 \quad \mbox{and} \quad 
g\in C^\infty(\partial\Omega \times \Rd)
\end{equation}
and periodic in the fast variable, that is, 
\begin{equation}
\label{e.periodic}
\a(y) = \a(y+\xi) \quad \mbox{and} \quad g(x,y) = g(x,y+\xi) \quad \forall x\in \partial \Omega, \, y \in\Rd, \, \xi \in \Zd. 
\end{equation}
The goal is to understand the asymptotic behavior of the system~\eqref{e.oscDP} as $\ep \to 0$. 

\smallskip

The problem arises naturally in the theory of elliptic homogenization when one attempts to obtain a two-scale expansion of solutions of the Dirichlet problem (with non-oscillating boundary condition) near the boundary, since the oscillating term in the two scale expansion induces a locally periodic perturbation of the boundary condition of order~$O(\ep)$, cf.~\cite{BLP,GVM2}. In other words, when examining the fine structure of solutions of the Dirichlet problem with oscillating coefficients, one expects to find a boundary layer in which the solutions behave qualitatively differently than they do in the interior of the domain (for which we have a complete understanding); and the study of this boundary layer can be reduced to a problem of the form~\eqref{e.oscDP}. Unfortunately, the size and characteristics of this boundary layer as well as the behavior of the solutions in it is not well-understood, due to difficulties which arise in the analysis of~\eqref{e.oscDP}. 

\smallskip

The first asymptotic convergence result for the homogenization of the system~\eqref{e.oscDP} in general uniformly convex domains was obtained by G\'erard-Varet and Masmoudi~\cite{GVM1,GVM2}. Under the same assumptions as above, they proved the existence of an homogenized boundary condition
\begin{equation*}
\bar{g}\in L^\infty(\partial\Omega)
\end{equation*}
such that, for each $\delta>0$ and $q\in [2,\infty)$,
\begin{equation}
\label{e.GVMrate}
\left\| u^\ep - \bar{u} \right\|_{L^q(\Omega)}^q \leq C\ep^{\frac{2(d-1)}{3d+5}-\delta},
\end{equation}
where the constant $C$ depends on $(\delta,d,L,\lambda,\Omega,g,\a)$ and~$\bar{u} = \left( \bar{u}_j \right)$ is the solution of the homogenized Dirichlet problem 
\begin{equation}
\label{e.homoDP}
\left\{ 
\begin{aligned}
& -\nabla \cdot\left( \bar{\a}\nabla \bar{u}(x) \right) = 0   & \mbox{in} & \ \Omega, \\
& \bar{u}(x) = \bar{g}(x)  & \mbox{on} & \ \partial \Omega. 
\end{aligned}
\right.
\end{equation}
and~$\bar{\a}$ is the usual homogenized tensor.\footnote{In~\cite{GVM2}, the estimate~\eqref{e.GVMrate} is stated only for $q=2$, but the statement for general~$q$ can be recovered by interpolation since $L^\infty$ bounds are available for both $u^\ep$ and $\bar{u}$.} Besides giving the quantitative rate in~\eqref{e.GVMrate}, this result was the first qualitative proof of homogenization of~\eqref{e.oscDP}.

\smallskip

The asymptotic analysis of~\eqref{e.oscDP} turns out to be more difficult than that typically encountered in the theory of periodic homogenization. It is natural to approximate $\partial \Omega$ locally by hyperplanes and thus the boundary layer by solutions of a Dirichlet problem in a half-space, and these hyperplanes destroy the periodic structure of the problem. The geometry of the domain~$\Omega$ thus enters in a nontrivial way and the local behavior of the boundary layer depends on whether or not the angle of the normal vector to~$\partial \Omega$ is non-resonant with the periodic structure of~$g(x,\cdot)$ and~$\a(\cdot)$ (i.e, the lattice~$\Zd$). In domains with a different geometry -- for example, in polygonal domains as opposed to uniformly convex domains (see~\cite{AA,MV}) -- the behavior can be completely different. This is further complicated by the strength of singularities in the boundary layer and the difficulty in obtaining any regularity of the homogenized boundary condition~$\bar{g}$, which is not known to be even continuous. 

\smallskip

The lack of a periodic structure means the problem requires a quantitative approach as opposed to the softer arguments based on compactness that are more commonly used in periodic homogenization. Such a strategy was pursued in~\cite{GVM2}, based on gluing together the solutions of half-space problems with boundary hyperplanes having Diophantine (non-resonant) slopes, and it led to the estimate~\eqref{e.GVMrate}. As pointed out by the authors of~\cite{GVM2}, the exponent in~\eqref{e.GVMrate} is not optimal and was obtained by balancing two sources of error. Roughly, if one approximates~$\partial\Omega$ by too many hyperplanes, then the constant in the Diophantine condition for some of the planes is not as good, leading to a worse estimate. If one approximates with too few planes, the error in the local approximation (caused by the difference between the local hyperplane and~$\partial \Omega$) becomes large. 

\smallskip

Given the role of the problem~\eqref{e.oscDP} in quantifying asymptotic expansions in periodic homogenization, obtaining the optimal convergence rate of $\| u^\ep - \bar{u} \|_{L^p(\Omega)}$ to zero is of fundamental importance. To make a guess for how far the upper bound for the rate in~\eqref{e.GVMrate} is from being optimal, one can compare it to the known rate in the case that $\a$ is constant-coefficient (i.e.,~$\a=\bar{\a}$). In the latter case, the recent work of Aleksanyan, Shahgholian and Sj\"olin~\cite{ASS2} gives
\begin{equation}
\label{e.ASSrate}
\left\| u^\ep - \bar{u} \right\|_{L^q(\Omega)}^q \leq C\cdot \left\{ 
\begin{aligned}
& \ep^{\frac12} & \mbox{in} & \ d=2,\\
& \ep \left| \log \ep \right| & \mbox{in} & \ d=3,\\
& \ep & \mbox{in} & \ d\geq 4.
\end{aligned}
\right.
\end{equation}
One should not expect a convergence rate better than $O\big(\ep^{\frac1q}\big)$ for $\| u^\ep - \bar{u}\|_{L^q(\Omega)}$. Indeed, observe that the difference in the boundary conditions is $O(1)$ and that we should expect this difference to persist at least in an $O(\ep)$-thick neighborhood of $\partial \Omega$. Thus the solutions will be apart by at least $O(1)$ in a set of measure at least $O(\ep)$, and this already contributes $O\big(\ep^{\frac1q}\big)$ to the $L^q$ norm of the difference. The reason that the rate is worse is low dimensions is because our estimate for the boundary layer is actually optimistic: in some places (near points of $\partial \Omega$ with good Diophantine normals) the boundary layer where $|u^\ep-\bar{u}| \gtrsim 1$ will be $O(\ep)$ thick, but in other places (near points with rational normals with small denominator relative to $\ep^{-\frac12}$) the boundary layer will actually be worse, up to $O\big(\ep^{\frac12}\big)$ thick. In small dimensions (i.e., in $d=2$ and with $d=3$ being critical) the ``bad" points actually take a relatively large proportion of the surface area of the boundary, leading to a worse error. While even the analysis in the constant-coefficient case is subtle, the case of general periodic $\a(y)$ poses much greater difficulties. 

\smallskip

The main result of this paper is the following improvement of the rate~\eqref{e.GVMrate}. In dimensions $d\geq4$, we obtain the optimal convergence rate up to an arbitrarily small loss of exponent, since it agrees with~\eqref{e.ASSrate}. 

\begin{theorem}
\label{t.main}
Assume that~\eqref{e.Omega},~\eqref{e.unifellip},~\eqref{e.smooth} and~\eqref{e.periodic} hold and let $\bar{\a}$ denote the homogenized coefficients associated to~$\a(\cdot)$ obtained in periodic homogenization. 
Then there exists a function $\bar{g} \in L^\infty(\partial\Omega) $ satisfying
\begin{equation*} \label{}
\left\{ 
\begin{aligned}
& \bar{g} \in W^{s,1}(\partial\Omega) \ \forall s < \tfrac23 & \mbox{in} & \ d=2, \\
& \nabla \bar{g} \in 
L^{\frac{2(d-1)}3,\infty}(\partial \Omega) & \mbox{in} & \ d > 2, \\ 
\end{aligned}
\right.
\end{equation*}
and, for every $q\in [2,\infty)$ and $\delta>0$, a constant $C(q,\delta, d,\lambda,\a,g,\Omega)<\infty$ such that, for every $\ep\in (0,1]$, the solutions~$u^\ep$ and $\bar{u}$ of the problems~\eqref{e.oscDP} and~\eqref{e.homoDP} satisfy the estimate
\begin{equation*} \label{}
\left\| u^\ep - \bar{u} \right\|_{L^q(\Omega)}^q 
\leq C\cdot \left\{ 
 \begin{aligned}
  &  \ep^{\frac13 -\delta} & \mbox{in} & \ d= 2, \\
 &  \ep^{ \frac23-\delta } & \mbox{in} & \ d= 3, \\
 &  \ep^{1-\delta} & \mbox{in} & \ d\geq 4.
   \end{aligned} 
 \right.
\end{equation*}
\end{theorem}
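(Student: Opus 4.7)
My approach refines the G\'erard-Varet--Masmoudi scheme by combining a sharper quantitative analysis of the half-space boundary-layer problem with a new Sobolev-type regularity estimate for the homogenized boundary datum $\bar g$. The first step is, for each unit vector $n \in S^{d-1}$ and each smooth $\Zd$-periodic $v$, to solve the half-space system
\[
-\nabla \cdot (\a(y) \nabla V) = 0 \ \text{ in } \ H_n := \{y\cdot n < 0\}, \qquad V = v \ \text{ on } \ \partial H_n,
\]
and to show that when $n$ satisfies a Diophantine condition of type $(\kappa,\ell)$, the solution stabilizes to a unique constant $V_\infty(n,v)$ at a polynomial rate in $|y\cdot n|$, with a prefactor tracked explicitly in $\kappa$. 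This is achieved by Fourier decomposition along $\partial H_n$ and decay of the half-space Green's function normal to the boundary. Setting $\bar g(x) := V_\infty(n(x), g(x,\cdot))$ then defines the candidate homogenized boundary condition.

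The core of the proof is the regularity of $\bar g$. I would first establish a quantitative modulus of continuity for $n \mapsto V_\infty(n,v)$ at Diophantine directions via a perturbative comparison of half-space solutions with nearby slopes: after a change of variables bringing the two half-spaces to a common one, the difference satisfies an equation with a small drift term whose effect can be controlled using the polynomial stabilization from the first step. Combined with the classical fact that the sublevel set $\{n \in S^{d-1} : \kappa(n) \leq t\}$ has surface measure $\lesssim t^{d-1}$, a layer-cake computation places $\nabla \bar g$ in $L^{2(d-1)/3,\infty}(\partial \Omega)$ for $d > 2$, with the analogous $W^{s,1}$ statement for $s<\tfrac23$ in $d=2$. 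The exponent $2(d-1)/3$ arises from optimizing the $\kappa$-loss in the continuity estimate against the measure of the resonant set.

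To conclude, I would construct a global approximation of $u^\ep$ by pasting half-space correctors along $\partial \Omega$. At each $x_0 \in \partial\Omega$, localize in a surface ball of radius $r(x_0)$ chosen adaptively as a function of $\ep$ and of the Diophantine quality of $n(x_0)$, and approximate $u^\ep$ by a two-scale expansion involving both the usual interior corrector and the boundary-layer corrector $V(n(x_0), g(x_0, \cdot); x/\ep) - \bar g(x_0)$, smoothly cut off. An energy estimate controls the interior residual, while harmonic-measure and maximum-principle arguments convert pointwise boundary errors into the $L^q(\Omega)$-norm. The optimal choice of $r(x_0)$, combined with the regularity of $\bar g$ from the previous step, yields the stated dimension-dependent rate: an $O(\ep)$-thick layer contributes the universal $\ep^1$ term dominant in $d \geq 4$, while an up to $O(\ep^{1/2})$-thick layer near resonant normals---whose surface measure is small precisely because $\nabla \bar g \in L^{2(d-1)/3,\infty}$---dominates in $d=2,3$. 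The main obstacle is the regularity estimate in the second step: earlier approaches yield only qualitative continuity of $\bar g$ and cannot exploit the smallness of the resonant set, so no balancing-based argument can improve on the exponent $\tfrac{2(d-1)}{3d+5}$ of~\eqref{e.GVMrate}. Extracting a genuine derivative estimate on $\bar g$, uniform over the Diophantine parameter, is the new ingredient that unlocks the sharper rate.
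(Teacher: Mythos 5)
Your second step---the quantitative modulus of continuity for the boundary-layer tail at Diophantine directions, obtained by comparing half-space solutions with nearby slopes and then running a layer-cake argument against the measure estimate for resonant normals---is essentially the paper's route to $\nabla\bar g\in L^{\frac{2(d-1)}{3},\infty}$ (Lemmas~\ref{lem.estV}--\ref{lem.estVt} and Propositions~\ref{p.contbarg}--\ref{p.gbarregularity}). One correction of attribution: the smallness of the resonant set is governed by $\A^{-1}\in L^{d-1,\infty}(\partial\Omega)$, i.e.\ by~\eqref{eq.measdioph}, not by the regularity of $\bar g$; the regularity of $\bar g$ enters only to control the error of freezing $\bar g$ at the chosen Diophantine reference point of each boundary patch (the term $\mathrm{T}_4$), and it is the $\A^{-3/2}$ loss in that continuity estimate which produces the exponents $\tfrac13$ and $\tfrac23$ in $d=2,3$.

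The genuine gap is in your third step. You revert to the G\'erard-Varet--Masmoudi scheme of pasting cut-off half-space correctors and closing with an energy estimate for the interior residual, and this is precisely the route the paper abandons. The suboptimal exponent $\tfrac{2(d-1)}{3d+5}$ in~\eqref{e.GVMrate} does not come only from the irregularity of $\bar g$: it comes from the commutator terms generated when the correctors are localized. If $V$ is the half-space corrector attached to a patch of size $r$ and $\psi$ the cutoff, the residual equation carries sources of the form $\nabla\psi\cdot\a(\cdot/\ep)\nabla V$ and $\nabla\cdot(\a(\cdot/\ep)\nabla\psi\,(V-V^\infty))$, which are of size $r^{-1}$ on a layer of thickness comparable to $\ep/\A$; estimating their sum in $H^{-1}(\Omega)$, and then converting the resulting energy bound into an $L^q(\Omega)$ bound for $u^\ep-\bar u$, loses powers of $\ep$ that you do not account for, and the adaptive choice of $r(x_0)$ alone does not remove this loss. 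Saying ``an energy estimate controls the interior residual'' is exactly the step where a new mechanism is required. The paper's solution is to avoid the residual equation altogether: it writes $u^\ep(x_0)$ by the Poisson formula, replaces $P^\ep_\Omega$ by its two-scale expansion $\overline{P}_\Omega\,\omega^\ep$ (Kenig--Lin--Shen), shows via the Avellaneda--Lin theory that $\omega^\ep$ is locally a periodic function restricted to the tangent plane (Proposition~\ref{prop:expphieps}), and then applies a quantitative ergodic theorem on each flat piece of a Calder\'on--Zygmund decomposition of $\partial\Omega$. The error is then a pointwise-in-$x_0$ weighted sum over boundary cubes, whose $L^q(\Omega)$ norm is computed directly in Section~\ref{s.estimates}. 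Without this device (or an equivalent replacement for the energy estimate), your scheme does not visibly improve on the GVM2 exponent, even granting the regularity of $\bar g$.
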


\smallskip

The difference in small dimensions from our rate and~\eqref{e.ASSrate} is due to an error which arises only in the case of operators with oscillating coefficients: the largest source of error comes from the possible irregularity of the homogenized boundary condition $\bar{g}$. 
Reducing this source of error requires to improve the regularity of~$\bar{g}$. The statement asserting that $\nabla \bar{g} \in L^{\frac{2(d-1)}{3},\infty}$ in $d>2$ and $\bar{g} \in W^{\frac23-,1}$ in $d=2$ is, to our knowledge, new and the best available regularity for the homogenized boundary condition (although see Remark~\ref{r.shen} below). It is an improvement of the one proved in~\cite{GVM2}, where it was shown\footnote{This estimate was not stated in~\cite{GVM2}, but it follows from their Corollary~2.9.} that~$\nabla \bar{g} \in L^{\frac{(d-1)}{2},\infty}$ in $d>2$ and $\bar{g} \in W^{\frac12-,1}$ in $d=2$.


\begin{remark}[Optimal estimates in dimensions $d=2,3$]
\label{r.shen}
Several months after an earlier version of this paper first appeared on the arXiv, Zhongwei Shen kindly pointed out to us that our method leads to optimal estimates for the boundary layer in dimensions $d=2,3$ (up to an arbitrarily small loss of exponent). Indeed, in a very recent preprint, Shen and Zhuge~\cite{ShenZhuge_arXiv16} were able to upgrade the  regularity statement for the homogenized boundary data in Theorem~\ref{t.main}, reaching $\nabla\bar{g}\in L^q$ for any $q<d-1$ in dimension $d\geq 3$, and $\bar{g}\in W^{s,1}$ for any $s<1$ in dimension $d=2$. As stated in~\cite{ShenZhuge_arXiv16}, this regularity is expected to be optimal. Their proof of the regularity of the homogenized boundary data follows ours, with a new ingredient, namely a weighted estimate for the boundary layer. We will mention below where this new idea makes it possible to improve on our result. Using this improvement of regularity and then following our argument for estimating boundary layers leads to the following improvement of the estimates of Theorem~\ref{t.main} in $d=2,3$, which is also proved in~\cite{ShenZhuge_arXiv16}: \emph{for every $q\in [2,\infty)$ and $\delta>0$, there is a constant $C(q,\delta, d,\lambda,\a,g,\Omega)<\infty$ such that, for every $\ep\in (0,1]$, the solutions~$u^\ep$ and $\bar{u}$ of the problems~\eqref{e.oscDP} and~\eqref{e.homoDP} satisfy the estimate}
\begin{equation} \label{e.optrateslowd}
\left\| u^\ep - \bar{u} \right\|_{L^q(\Omega)}^q 
\leq C\cdot \left\{ 
 \begin{aligned}
  &  \ep^{\frac12 -\delta} & \mbox{in} & \ d= 2, \\
 &  \ep^{ 1-\delta } & \mbox{in} & \ d= 3.
   \end{aligned} 
 \right.
\end{equation}
This is optimal since it agrees with~\eqref{e.ASSrate}, up to an arbitrary loss of exponent. 
\end{remark}

\smallskip

We do not expect it to be possible to eliminate the small loss of exponent represented by~$\delta>0$ without upgrading the qualitative regularity assumption~\eqref{e.smooth} on the smoothness of~$\a$ and~$g$ to a quantitative one (for example, that these functions are analytic). Note that this regularity assumption plays an important role in the proof of Theorem~\ref{t.main} and is not a mere technical assumption or one used to control the small scales of the solutions. Rather, it is used to obtain control over the large scale behavior of the solutions via the quasiperiodic structure of the problem since it gives us a quantitative version of the ergodic theorem (see Proposition~\ref{p.ergodicthm}). In other words, the norms of high derivatives of~$\a$ and~$g$ control the ergodicity of the problem and thus the rate of homogenization. 

\smallskip

In the course of proving Theorem~\ref{t.main}, we give a new expression for the homogenized boundary condition which makes it clear that $\bar{g}(x)$ is a local, weighted average of $g(x,\cdot)$ which depends also on the normal vector $n(x)$ to $\partial \Omega$ at $x$. See~\eqref{e.gbarform1} below and~\eqref{e.defbarg} for the more precise formula.   

\smallskip

The proof of Theorem~\ref{t.main} blends techniques from previous works on the problem~\cite{GVM1,GVM2,ASS1,ASS2} with some original estimates and then combines them using a new strategy. Like the approach of~\cite{GVM2}, we cut the boundary of $\partial \Omega$ into pieces and approximate each piece by a hyperplane. However, rather than gluing approximations of the solution together, we approximate, for a fixed $x_0$, the contribution of each piece of the boundary in the Poisson formula
\begin{equation}
\label{e.formPoisson}
u^\ep(x_0) = \int_{\partial \Omega} P^\ep_\Omega(x_0,x)g\left( x,\frac x\ep \right)\,d\H^{d-1}(x).
\end{equation}
Thus, at least in the use of the Poisson formula, our approach bears a similarity to the one of~\cite{ASS1,ASS2}. 

\smallskip

The first step in the argument is to replace the Poisson kernel $P^\ep_\Omega(x_0,x)$ for the heterogeneous operator $-\nabla \cdot  \a\left( \frac\cdot\ep\right)\nabla $ by its two scale expansion, using a result of Kenig, Lin and Shen~\cite{KLS3} (based on the classical regularity theory of Avellaneda and Lin~\cite{AL1,AL2}), which states that
\begin{equation*} \label{}
P^\ep_\Omega(x_0,x) =  \overline{P}_\Omega(x_0,x) \omega^\ep(x)
\ + \ \mbox{small error,}
\end{equation*}
where $\overline{P}_\Omega(x_0,x)$ is the Poisson kernel for the homogenized operator $-\nabla \cdot \bar{\a}\nabla$ and $\omega^\ep(x)$ is a highly oscillating function which is given explicitly in~\cite{KLS3} and which depends mostly on the coefficients in an $O(\ep)$-sized neighborhood of the point $x\in\partial\Omega$. We then show that this function $\omega^\ep(x)$ can be approximated by the restriction of a \emph{smooth, $\Zd$--periodic} function on $\Rd$ which depends only on the direction of the normal derivative to $\partial \Omega$ at $x$. That is, 
\begin{equation*} \label{}
\omega^\ep(x) = \tilde\omega \left( n(x),\frac x\ep \right)
 \ + \ \mbox{small error,}
\end{equation*}
for a smooth $\Zd$--periodic function $\tilde \omega(n(x),\cdot)\in C^\infty(\Rd)$, where $n(x)$ denotes the outer unit normal to $\partial \Omega$ at $x$. This is true because the boundary of $\partial \Omega$ is locally close to a hyperplane which is then invariant under $\Zd$--translations. To bound the error in this approximation we rely in a crucial way on the $C^{1,1-}$ regularity theory of Avellaneda and Lin~\cite{AL1,AL2} up to the boundary for periodic homogenization.

\smallskip

We can therefore approximate the Poisson formula~\eqref{e.formPoisson} by
\begin{equation} \label{}
\label{e.formPoisson2}
u^\ep(x_0) = \int_{\partial \Omega} \overline{P}_\Omega(x_0,x) \,\tilde{\omega} \left( n(x) ,\frac x\ep\right) g\left( x,\frac x\ep \right) \,d\H^{d-1}(x)  
 \ + \ \mbox{small error.}
\end{equation}
Finally, we cut up the boundary of $\partial \Omega$ into small pieces which are typically of size $O(\ep^{1-})$ but sometimes as large as $O\big(\ep^{\frac12-}\big)$, depending on the non-resonance quality of the local outer unit normal to $\partial \Omega$. This chopping has to be done in a careful way, which we handle by performing a Calder\'on-Zygmund-type cube decomposition. In each piece, we freeze the macroscopic variable~$x=\bar{x}$ on both $\tilde \omega$ and $g$ and approximate the boundary by a piece of a hyperplane, making another small error. The integral on the right of~\eqref{e.formPoisson2} is then replaced by a sum of integrals, each of which is a slowly varying smooth function $\overline{P}_\Omega(x_0,\cdot)$ times the restriction of a smooth, $\ep\Zd$--periodic function $\tilde \omega\left(n(\bar{x}),\frac\cdot\ep\right)g\left(\bar{x} ,\frac\cdot\ep\right)$ to a hyperplane. This is precisely the situation in which an appropriate quantitative form of the ergodic theorem for quasiperiodic functions allows us to compute the integral of each piece, up to a (very) tiny error, which turns out to be close to the integral of $\overline{P}_\Omega(x_0,\cdot)$ times $\left\langle \tilde \omega\left(n(\bar{x}),\cdot\right)g\left(\bar{x},\cdot \right) \right\rangle$, the mean of the local periodic function. Therefore we deduce that 
\begin{equation}
\label{e.gbarform1}
u^\ep(x_0) =  \int_{\partial \Omega} \overline{P}_\Omega(x_0,x) \big\langle \tilde{\omega} (n(x),\cdot ) g( x,\cdot) \big\rangle \,d\H^{d-1}(x)  
 \ + \ \mbox{small error.}
\end{equation}
The right side is now $\bar{u}(x_0)$ plus the errors, since now we can see that the homogenized boundary condition should be  defined by
\begin{equation*} \label{}
\bar{g}(x):=  \big\langle \tilde{\omega} (n(x),\cdot )g( x,\cdot) \big\rangle,\quad x\in\partial \Omega. 
\end{equation*}

\smallskip

There is an important subtlety in the final step, since the function $\bar{g}$ is not known to be very regular. This is because we do not know how to prove that $\tilde{\omega}(n,x)$ is even continuous as a function of the direction $n\in \partial B_1$. As a consequence, we have to be careful in estimating the error made in approximating the homogenized Poisson formula with the sum of the integrals over the flat pieces. This is resolved by showing that $\bar{g}$ is continuous at every $x\in \partial\Omega$ with Diophantine normal $n(x)$, with a quantitative bound for the modulus which leads to the conclusion that $\nabla \bar{g} \in L^{\frac{2(d-1)}3-}$. This estimate is a refinement of those of~\cite{GVM2} and also uses ideas from~\cite{BLtail}; see the discussion in Section~\ref{s.halfspace}. 
 
\smallskip

We conclude this section by remarking that, while many of the arguments in the proof of Theorem~\ref{t.main} are rather specific to the problem, we expect that the high-level strategy-- based on two-scale expansion of the Poisson kernel, a suitable regularity theory (like that of Avellaneda and Lin) and the careful selection of approximating half-spaces (done here using a Calder\'on-Zygmund cube decomposition of the boundary based on the local Diophantine quality)-- to be quite flexible and useful in other situations. For instance, we expect that the analogous problem for equations in nondivergence form, studied for instance by Feldman~\cite{F}, to be amenable to a similar attack.

\subsection{Notations and basic definitions}

The indices $i,\, j,\, k,\, l$ usually stand for integers ranging between $1$ and $L$, whereas the small greek letters $\alpha,\, \beta,\, \gamma$ stand for integers ranging between $1$ and $d$. The vectors $e_i\in\R^L$, for $i=1,\ldots, L$ form the canonical basis of $\R^L$. The vector $e_d=(0,\ldots, 0,1)\in\Rd$ is the $d$-th vector of the canonical basis of $\Rd$. The notation $B_{d-1}(0,r_0)\subset\R^{d-1}$ denotes the Euclidean ball of $\R^{d-1}$ centered at the origin and of radius $r_0$. For a point $z=(z',z_d)\in\Rd$, $z'\in\R^{d-1}$ is the tangential component and $z_d\in\R$ the vertical one. The gradient $\nabla'$ is the gradient with respect to the $d-1$ first variables. The notation $M_{d}(\R)$ (resp. $M_{d-1,d}(\R)$, $M_L(\R)$) denotes the set of $d\times d$ (resp. $(d-1)\times d$, $L\times L$) matrices.

\smallskip

Unless stated otherwise, $x_0$, $x$, $\bar{x}$, $z$ denote slow variables. The point $x_0$ usually denotes a point in the interior of $\Omega$, while $x$ and $\bar{x}$ are points on the boundary ($\bar{x}$ stands for a fixed reference point). The notation $y$ stands for the fast variable, $y=\frac{x}{\ep}$. The vector $n(x)\in\partial B_1$ is the unit outer normal to $\partial\Omega$ at the point $x\in\partial\Omega$. Given $n\in\partial B_1$ and $a\in\R$, the notation $D_n(a)$ stands for the half-space $\{y\cdot n>a\}$ of $\Rd$.

\smallskip

We let~$\H^s$ denote, for $s>0$, the~$s$-dimensional Hausdorff measure on~$\Rd$. For $1\leq p\leq \infty$ and $s\geq 0$, $L^p$ is the Lebesgue space of exponent $p$, $W^{s,p}$ is the Sobolev space of regularity index $s$ and $H^s=W^{s,2}$. For $1<p<\infty$, $L^{p,\infty}$ denotes the weak $L^p$ space.

\smallskip

The first-order correctors $\chi=\chi^\beta(y)\in M_L\left(\mathbb R\right)$, indexed by $\beta=1,\ldots,\, d$, are the unique solutions of the cell problem
\begin{equation*}
\left\{ 
\begin{aligned}
& -\nabla\cdot \left(\a(y)\nabla\chi^\beta(y)\right) = \partial_\alpha\a^{\alpha\beta}(y)   & \mbox{in} & \ \mathbb T^d, \\
& \int_{\mathbb T^d}\chi^\beta(y)\, dy = 0.  &  &
\end{aligned}
\right.
\end{equation*}
The constant homogenized tensor $\bar{\a}=\left( \bar{\a}^{\alpha\beta}_{ij}\right)_{i,j=1,\ldots,L}^{\alpha,\beta=1,\ldots,d}$ is given by
\begin{equation*}
\bar{\a}^{\alpha\beta}:=\int_{\mathbb T^d}\a^{\alpha\beta}(y)\, dy+\int_{\mathbb T^d}\a^{\alpha\gamma}(y)\partial_{\gamma}\chi^\beta(y)\, dy.
\end{equation*}
Starred quantities such as $\chi^*$ refer to the objects associated to the adjoint matrix $\a^*$ defined by $(\a^*)^{\alpha\beta}_{ij}=\a^{\beta\alpha}_{ji}$, for $\alpha,\, \beta=1,\ldots,\, d$ and $i,\, j=1,\ldots,\, L$.

\smallskip

We now turn to the definition of the Poisson kernel. Let $\ep>0$ be fixed. Let $G^\ep_\Omega\in M_L(\mathbb R)$ be the Green kernel associated to the domain $\Omega$ and to the operator $-\nabla\cdot \a\left(\frac\cdot\ep\right)\nabla$. For the definition, the existence and basic properties of the Green kernel, we refer to \cite{DM}. 
The Poisson kernel $P^\ep_\Omega\in M_L(\mathbb R)$ associated to the domain $\Omega$ and to the operator $-\nabla\cdot \a\left(\frac\cdot\ep\right)\nabla$ is now defined in the following way: for all $i,\, j=1,\ldots,\, L$, for all $x_0\in\Omega$, $x\in\partial\Omega$,
\begin{equation*}
P^\ep_{\Omega,ij}(x_0,x):=-n(x)\cdot\a^*_{ik}\!\left(\frac x\ep\right)\nabla G^{\ep}_{\Omega,kj}(x,x_0).
\end{equation*}
We will use many times the following uniform bound for the Poisson kernel (cf.~\cite[Theorem 3(i)]{AL1}): there exists a constant $C(d,L,\lambda,\a,\Omega)$ uniform in $\ep$, such that for $x_0\in\Omega$, for $x\in\partial\Omega$,
\begin{equation}
\label{e.PKB}
\big|P^\ep_\Omega(x_0,x)\big|\leq \frac{C\dist(x_0,\Omega)}{|x_0-x|^d}.
\end{equation}
We denote by $\overline{P}_\Omega$ the Poisson kernel for the homogenized operator $-\nabla\cdot\bar{\a}\nabla$.

\smallskip

Let us conclude this section by two remarks on the constants. In the Diophantine condition (see Definition \ref{def.dioph}), the exponent $\kappa>\frac1{d-1}$ is fixed for the whole paper. This exponent plays no role in our work except that the condition $\kappa > \frac1{d-1}$ implies~\eqref{eq.measdioph}. As usual, $c$ and $C$ denote positive constants that may vary in each occurrence. The dependence of these constants on other parameters is made precise whenever it is necessary.

\subsection{Outline of the paper}
In the next section, we present a quantitative ergodic theorem for quasiperiodic functions and discuss the Diophantine conditions. In Section~\ref{s.CZdecomposition}, we give a triadic cube decomposition of a neighborhood of the boundary~$\partial \Omega$ using a Calder\'on-Zygmund-type stopping time argument. In Section~\ref{s.halfspace}, we analyze the half-space problem and show that the homogenized boundary condition~$\bar{g}$ is continuous at points~$x\in \partial\Omega$ with Diophantine normals~$n(x)$. In Section~\ref{s.twoscale}, we show that the two-scale expansion of the Poisson kernel is, up to a small error, locally periodic. In Section~\ref{sec.outline}, we combine all the previous ingredients to obtain an estimate of the homogenization error in terms of local errors which depend on the size of the local cube in the decomposition. In the final section, we compute the $L^q$ norm of these errors to complete the proof of Theorem~\ref{t.main}.

\section{Quantitative ergodic theorem for quasiperiodic functions}
\label{s.ergodic}

The following result is a quantitative ergodic theorem for quasiperiodic functions satisfying a Diophantine condition. Its statement can be compared to that of~\cite[Proposition 2.1]{AGK}, although the argument we give here, which is Fourier analytic, is very different from the one in~\cite{AGK}. We state it in a very general form, though we apply it later for a more specific Diophantine condition.

\begin{proposition}
\label{p.ergodicthm}
Let $\Psi : \R^{d-1} \to \R$ and $K : \R^d \to \R$ be smooth functions. 
Suppose that the matrix $N$ 
is such that it satisfies, for some $A>0$ and $f : (0,\infty) \to (0,\infty)$, the Diophantine condition
\begin{equation*} 
\left|N^T \xi \right| \geq A f(|\xi|)  \quad \forall \xi \in \Z^d \setminus \{0\}.
\end{equation*}
Then we have the estimate
\begin{multline}
\label{e.ergodic}
\left| 
\int_{\R^{d-1}} \Psi(z') K\!\left( \frac{Nz'}{\eta} \right)\,dz' 
- \widehat{K}(0) \int_{\R^{d-1}} \Psi(z') \, dz' 
\right|  \\
\leq 
\left( A^{-1} \eta \right)^k 
\left( \int_{\R^{d-1}} \left| \nabla^k \Psi(z') \right|\,dz' \right)
\left( \sum_{\xi \in \Zd\setminus\{ 0 \}} 
\left| \widehat{K}(\xi) \right| \left| f(|\xi|) \right|^{-k} \right).
\end{multline}
\end{proposition}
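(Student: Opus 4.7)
The plan is to proceed by Fourier analysis on the torus. Since $K$ is smooth and $\Z^d$-periodic (this is implicit in the very fact that the statement uses the Fourier coefficients $\widehat{K}(\xi)$ indexed by $\xi\in\Z^d$), its Fourier series
\[
K(y) = \sum_{\xi\in\Z^d} \widehat{K}(\xi)\, e^{2\pi i \xi\cdot y}
\]
converges absolutely and uniformly, together with all its derivatives. So I would substitute this expansion into the integral and exchange sum and integral, obtaining
\[
\int_{\R^{d-1}} \Psi(z')\, K\!\Bigl(\frac{Nz'}{\eta}\Bigr)\,dz'
\;=\; \sum_{\xi\in\Z^d} \widehat{K}(\xi)\int_{\R^{d-1}} \Psi(z')\, e^{2\pi i (N^T\xi)\cdot z'/\eta}\,dz'.
\]
The $\xi=0$ term contributes $\widehat{K}(0)\int_{\R^{d-1}}\Psi$, which is exactly the quantity to be subtracted, so the left-hand side of \eqref{e.ergodic} equals the sum over $\xi\in\Z^d\setminus\{0\}$.

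For each nonzero $\xi$, the inner integral is (up to sign convention) the Fourier transform of $\Psi$ evaluated at $-N^T\xi/\eta$. Since $\xi\neq 0$, the Diophantine hypothesis guarantees $|N^T\xi|\geq A f(|\xi|)>0$, so this frequency is nonzero and I can integrate by parts $k$ times against the phase $e^{2\pi i (N^T\xi)\cdot z'/\eta}$ in the direction of $N^T\xi/|N^T\xi|$. Each integration by parts contributes a factor of $\eta/(2\pi|N^T\xi|)$ and moves one derivative onto $\Psi$, with no boundary contributions because $\Psi$ is smooth and (we may assume) decays sufficiently at infinity. This yields
\[
\left| \int_{\R^{d-1}} \Psi(z')\, e^{2\pi i (N^T\xi)\cdot z'/\eta}\,dz' \right|
\;\leq\; \left(\frac{\eta}{2\pi|N^T\xi|}\right)^{\!k}\int_{\R^{d-1}} \bigl|\nabla^k \Psi(z')\bigr|\,dz'.
\]
Inserting the Diophantine lower bound $|N^T\xi|\geq Af(|\xi|)$ and summing over $\xi\neq 0$ gives the desired inequality (the factor $(2\pi)^{-k}$ can be absorbed into the constant).

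There is no serious obstacle in this argument: the hardest point is simply matching the decay of $\widehat{K}(\xi)$ as $|\xi|\to\infty$ against the growth of $f(|\xi|)^{-k}$ to ensure the series on the right-hand side converges, but this is built into the hypothesis that the right-hand side of \eqref{e.ergodic} is finite (and is automatic for smooth $K$ and reasonable $f$, since $\widehat{K}(\xi)$ decays faster than any polynomial). One bookkeeping issue worth flagging is the sign/normalization convention for the Fourier transform, which only affects the constant $(2\pi)^k$ and does not alter the statement. The essentially Fourier-analytic nature of the proof, together with the fact that integration by parts converts frequency into regularity of $\Psi$, makes the Diophantine exponent $k$ appear exactly as advertised in the bound $(A^{-1}\eta)^k$.
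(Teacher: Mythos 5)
Your proposal is correct and follows essentially the same route as the paper's own proof: Fourier-expand $K$, isolate the $\xi=0$ mode as the subtracted term, integrate by parts $k$ times against the phase in the direction $N^T\xi/|N^T\xi|$ to gain the factor $(\eta/|N^T\xi|)^k$, and then invoke the Diophantine lower bound and sum over $\xi\neq 0$. The only difference is the Fourier normalization (your $2\pi$ factors only improve the constant), so there is nothing to add.
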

\begin{proof}
Assume without loss of generality (by subtracting a constant from $K$) that $\widehat{K}(0)=0$. Now we Fourier expand $K$: 
\begin{align*}
\lefteqn{
\int_{\R^{d-1}} \Psi(z') K\!\left( \frac{Nz'}{\eta} \right)\,dz' 
} \qquad & \\
& = \sum_{\xi\in \Zd \setminus \{ 0 \}} \widehat{K}(\xi) \int_{\R^{d-1}} \Psi(z') \exp\left( iN^T\xi \cdot  \frac{z'}{\eta} \right) \, dz' \\
& =  - \sum_{\xi\in \Zd \setminus \{ 0 \}} \widehat{K}(\xi) \int_{\R^{d-1}} \Psi(z') \frac{i \eta N^T \xi}{|N^T \xi|^2}   \cdot \nabla\left\{\exp\left( iN^T\xi \cdot  \frac{z'}{\eta} \right)\right\} \, dz' \\
& =  \sum_{\xi\in \Zd \setminus \{ 0 \}} \frac{i \eta \widehat{K}(\xi) N^T \xi}{|N^T \xi|^2}  \cdot \int_{\R^{d-1}} \nabla \Psi(z') \exp\left( iN^T\xi \cdot  \frac{z'}{\eta} \right)  \, dz'.
\end{align*}
After iterating this $k$ times, we obtain
\begin{multline*} \label{}
\int_{\R^{d-1}} \Psi(z') K\!\left( \frac{Nz'}{\eta} \right)\,dz'  \\
 = \sum_{\xi\in \Zd \setminus \{ 0 \}} 
 \left(  \frac{i \eta}{|N^T \xi|}  \right)^k \widehat{K}(\xi) \left(\frac{N^T \xi}{|N^T \xi|} \right)^{\otimes k} \int_{\R^{d-1}} \nabla^k \Psi(z') \exp\left( iN^T\xi \cdot  \frac{z'}{\eta} \right)  \, dz'.
\end{multline*}
Applying the Diophantine condition $|N^T \xi| \geq A f(|\xi|)$, we get
\begin{equation*} \label{}
\left| \int_{\R^{d-1}} \Psi(z') K\!\left( \frac{Nz'}{\eta} \right)\,dz' \right| 
\leq 
 \sum_{\xi\in \Zd \setminus \{ 0 \}}  \eta^k A^{-k} \left| f(|\xi|) \right|^{- k} |\widehat{K}(\xi)|
 \int_{\R^{d-1}} \left| \nabla^k \Psi(z') \right| \, dz'.
\end{equation*}
This completes the proof. 
\end{proof}

We now precisely describe the Diophantine condition we will use in our applications of Proposition~\ref{p.ergodicthm}. We take a parameter $\kappa> \frac{1}{d-1}$ to be fixed for the rest of the paper.

\begin{definition}[Diophantine direction]\label{def.dioph}
We say that $n\in\partial B_1$ is \emph{Diophantine with constant $A>0$} if
\begin{equation}\label{eq-dioph}
\left|(\Idd_d - n \otimes n)\xi\right|\geq A|\xi|^{-\kappa},\qquad \forall \xi\in\Z^{d}\setminus\{0\},
\end{equation}
where $(\Idd_d - n \otimes n)\xi$ denotes the projection of $\xi$ on the hyperplane $n^\perp$. 
\end{definition}

Let $M$ be an orthogonal matrix sending $e_d$ on $n$. We can reformulate the Diophantine condition in terms of the projection on $\R^{d-1}\times\{0\}$ of the rotated lattice elements $M^T\xi$. Denoting by $N\in M_{d,d-1}(\R)$ the matrix of the first $d-1$ columns of $M$, we have
\begin{equation*}
M^T\xi=(N^T\xi)_1e_1+\ldots\ (N^T\xi)_{d-1}e_{d-1}+(\xi\cdot n)e_d.
\end{equation*}
Thus, condition \eqref{eq-dioph} is equivalent to 
\begin{equation}\label{eq-diophbis}
\left|N^T\xi\right|\geq A|\xi|^{-\kappa},\qquad \forall \xi\in\Z^{d}\setminus\{0\}.
\end{equation}

\smallskip

The constant $A$ is necessarily less than $1$. Notice that a Diophantine vector is necessarily irrational, that is, $n\notin\R\Z^d$. The value of the exponent $\kappa$ is not important and plays no role in the paper, provided it is chosen larger than $(d-1)^{-1}$. For $A>0$, denote
\begin{equation*}
\Lambda(A):=\left\{n\in\partial B_1\,:\ n \ \mbox{satisfies}\ \mbox{\eqref{eq-dioph}}\right\}.
\end{equation*}
Then the union of $\Lambda(A)$ over all $A>0$ is a set of full measure in $\partial B_1$ with respect to $\H^{d-1}$. More precisely, we have the estimate (cf.~\cite[(2.2)]{GVM2}),
\begin{equation}\label{eq.measdioph}
\H^{d-1}\left(\Lambda(A)^c\right)\leq CA^{d-1}.
\end{equation}
We now introduce the function 
\begin{equation*}
\A\,:\,  \partial B_1\longrightarrow [0,1],
\end{equation*}
defined in the following way: for $n\in\partial B_1$,
\begin{equation}\label{eq.An}
\A(n):=\sup \left\{A\geq 0\,:\ n \in \Lambda(A) \right\}.
\end{equation}
As a consequence of \eqref{eq.measdioph}, the function $\A^{-1}$ satisfies the bound
\begin{equation*}
\H^{d-1}\left( \left\{ x\in\partial B_1 \,:\, \A^{-1}(x) > t \right\} \right) \leq Ct^{1-d}. 
\end{equation*}
Thus $\A^{-1}$ belongs to the weak Lebesgue space $L^{d-1,\infty}(\partial B_1)$ 
and the previous line can be written equivalently as
\begin{equation*} \label{}
\left\| \A^{-1} \right\|_{L^{{d-1},\infty}(\partial B_1)} \leq C.
\end{equation*}
If we consider a smooth uniformly convex domain $\Omega$, then 
the mapping
\begin{equation*}
S\,:\, x\in\partial\Omega\longmapsto n(x)\in\partial B_1,
\end{equation*}
where $n(x)$ is the unit external outer to $x\in\partial\Omega$, is a diffeomorphism (since the principal curvatures are bounded from below and above). Therefore, $\A^{-1} \circ S$ belongs to $L^{d-1,\infty}(\partial\Omega)$, 
and we have the bound
\begin{equation}
\label{e.Diophest}
\left\| \A^{-1} \circ S \right\|_{L^{{d-1},\infty}(\partial\Omega)} \leq C.
\end{equation}
Henceforth, we do not distinguish between the functions~$\A$ and~$\A\circ S$ in our notation. 

\section{Triadic cube decomposition of the boundary layer}
\label{s.CZdecomposition}

In this section, we perform a Calder\'on-Zygmund-type decomposition of the domain near the boundary which, when applied to the Diophantine constant of the normal to the boundary, will help us construct the approximation of the boundary layer.

\smallskip

We begin by introducing the notation we use for triadic cubes. For $n\in\Z$, we denote the triadic cube of size $3^n$ centered at $z\in 3^n\Zd$ by 
\begin{equation*} \label{}
\cu_n(z):= z + \left[ -\frac12 3^n, \frac12 3^n  \right)^d. 
\end{equation*}
We denote the collection of triadic cubes of size $3^n$ by
\begin{equation*} \label{}
\T_n:= \left\{ \cu_n(z)\,:\, z\in 3^n\Zd\right\}.
\end{equation*}
Notice that $\T_n$ is a partition of $\Rd$. The collection of all triadic cubes is
\begin{equation*} \label{}
\T:= \left\{ \cu_n(z) \,:\, n\in \Z, z \in 3^n\Zd \right\}
\end{equation*}
If $\cu\in\T$ has the form $\cu = \cu_n(z)$, then we denote by $\size(\cu):=3^n$ the side length of~$\cu$, the center of the cube by $\bar{x}(\cu):=z$ and, for $r>0$, we write $r\cu$ to denote the cube $z + r \cu_n$, that is, the cube centered at $z$ of side length $3^nr$. If~$\cu,\cu'\in\T$, then we say that $\cu$ is the \emph{predecessor} of~$\cu'$ if $\cu'\subseteq \cu$ and $\size(\cu) = 3 \size(\cu')$. We also say that~$\cu'$ is a \emph{successor} of $\cu$ if $\cu$ is the predecessor of $\cu'$. 

\begin{proposition}
\label{p.CZdecomposition}
Assume that $\Omega \subseteq \Rd$ is a bounded Lipschitz domain, $\delta >0$, and let
\begin{equation*}
F : \partial \Omega \to [\delta,\infty)
\end{equation*}
be a Borel measurable function. Then there exists a collection $\P \subseteq \T$ of disjoint triadic cubes satisfying the following properties:
\smallskip
\begin{enumerate}

\item[(i)] $\displaystyle\partial \Omega \subseteq \bigcup \P$.

\smallskip

\item[(ii)] For every $\cu\in \P$,
\begin{equation*} \label{}
\cu \cap \partial \Omega \neq \emptyset. 
\end{equation*}

\smallskip

\item[(iii)] For every $\cu\in\P$, 
\begin{equation*} \label{}
\essinf_{3\cu \cap \partial \Omega} F 
\leq \size(\cu).
\end{equation*}

\smallskip

\item[(iv)] There exists a positive constant $C(d,\Omega)<\infty$ such that for every $n \in \Z$,
\begin{equation*} \label{}
\# \!\left\{ \cu \in \P\,:\, \size(\cu) \geq 3^n \right\}
\leq C 3^{-n(d-1)} \H^{d-1}\left( \left\{ x \in \partial \Omega \,:\, F(x) \geq 3^{n-2} \right\} \right)
\end{equation*}

\smallskip

\item[(v)] If $\cu,\cu'\in \P$ are such that $\dist(\cu,\cu') = 0$, then 
\begin{equation*} \label{}
\frac13 \leq \frac{\size(\cu)}{\size(\cu')} \leq 3. 
\end{equation*}

\end{enumerate}
\end{proposition}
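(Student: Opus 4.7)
The plan is to build $\P$ via a Calder\'on--Zygmund-type stopping-time argument on the triadic grid. For each $x \in \partial\Omega$, I would define
\[
n(x) := \min\bigl\{n\in\Z : \essinf_{3\cu_n(x)\cap\partial\Omega} F \leq 3^n\bigr\}.
\]
The minimum exists: the lower bound $F\geq\delta$ gives $n(x)\geq\lceil\log_3\delta\rceil$, and once $n$ is large enough that $3\cu_n(x)\supseteq\partial\Omega$, the essinf drops to the finite quantity $\essinf_{\partial\Omega}F$, so the defining condition is eventually satisfied.

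I would then realize $\P$ through an equivalent top-down recursion on the triadic tree. Start from a single triadic cube $\cu_0\supseteq\partial\Omega$ for which (iii) holds. Given a cube $\cu$ meeting $\partial\Omega$ for which (iii) has just been verified, inspect each triadic child $\cu'$ with $\cu'\cap\partial\Omega\neq\emptyset$: if every such child itself satisfies (iii), subdivide $\cu$ and recurse on each boundary-touching child; otherwise add $\cu$ to $\P$. The recursion terminates because $F\geq\delta$ forces (iii) to fail on any cube of side below $\delta$. Properties (i)--(iii) are then immediate: the starting cube contains $\partial\Omega$ and we only descend into children intersecting $\partial\Omega$; condition (iii) is verified before any cube is inserted into $\P$.

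For (iv), each $\cu\in\P$ of side $3^m$ possesses a ``bad child''---a triadic child $\cu^*$ of $\cu$ with $\cu^*\cap\partial\Omega\neq\emptyset$ and $\essinf_{3\cu^*\cap\partial\Omega} F > 3^{m-1}$ (otherwise the recursion would have continued). Consequently $F > 3^{m-1}$ almost everywhere on $3\cu^*\cap\partial\Omega$, a set of $\H^{d-1}$-measure at least $c_\Omega 3^{(m-1)(d-1)}$ by $(d-1)$-Ahlfors regularity of $\partial\Omega$. Since the children of disjoint parents are disjoint, the triples $3\cu^*$ on any fixed scale have overlap multiplicity bounded by a constant depending only on $d$. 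Counting at each scale $m\geq n$ separately, then summing geometrically in $m$, and using $\{F>3^{m-1}\}\subseteq\{F\geq 3^{n-2}\}$ for $m\geq n$ yields the bound (iv) with constant depending only on $d$ and $\Omega$.

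Property (v) is the main obstacle. The raw top-down recursion does not directly impose comparable sizes on adjacent cubes, and naive post-hoc subdivision is dangerous: if $\cu'$ is a triadic child of $\cu$, then $3\cu'\subseteq 3\cu$ and hence $\essinf_{3\cu'\cap\partial\Omega} F \geq \essinf_{3\cu\cap\partial\Omega} F$, so splitting a cube already placed in $\P$ may destroy (iii). The standard remedy, borrowed from Whitney-decomposition arguments, is an iterative balancing pass: whenever a cube $\cu\in\P$ is adjacent to a significantly smaller cube $\cu'\in\P$, split $\cu$ in a controlled way, re-verifying (iii) on each new piece and merging siblings back when (iii) fails; equivalently, replace $n(x)$ by a Lipschitz envelope $\tilde n$ satisfying $|\tilde n(x)-\tilde n(y)|\leq 1$ at neighboring triadic points. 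A finite number of such passes suffices thanks to the lower bound $F\geq\delta$, and the resulting collection retains (i)--(iv) while enforcing (v) by construction. This balancing step is where the bulk of the technical work lies.
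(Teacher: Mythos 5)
Your construction and verification of (i)--(iv) for the raw stopping-time collection are essentially correct (and the count for (iv) is in fact cleaner than in the paper, precisely because you have not yet tampered with the collection). The genuine gap is property (v), which you explicitly defer to a ``balancing pass'' whose description does not survive scrutiny. You correctly observe that subdividing a cube $\cu\in\P$ is forbidden, since for a triadic child $\cu'$ one has $3\cu'\subseteq 3\cu$ and hence $\essinf_{3\cu'\cap\partial\Omega}F\geq\essinf_{3\cu\cap\partial\Omega}F$, so (iii) can fail for the pieces; but your proposed remedy --- split, re-verify (iii), and ``merge siblings back when (iii) fails'' --- is circular: merging back restores exactly the size disparity you were trying to remove, so the procedure need not terminate at a collection satisfying both (iii) and (v). The Lipschitz-envelope variant suffers from the same defect, because the envelope that controls neighboring scales produces a stopping index no larger than the original one, i.e.\ \emph{smaller} cubes, and shrinking is precisely the direction that destroys (iii).

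The resolution is to go the opposite way: (iii) is stable under \emph{enlargement} (if $\cu\subseteq\cu''$ then $\essinf_{3\cu''\cap\partial\Omega}F\leq\essinf_{3\cu\cap\partial\Omega}F\leq\size(\cu)\leq\size(\cu'')$), so one must force cubes to stop \emph{earlier} near already-stopped cubes rather than subdivide large cubes afterwards. This is what the paper does: a cube of generation $k-1$ is declared bad not only when some child violates the essinf criterion, but also as soon as it touches a bad cube of generation $k-2$; this second trigger propagates outward one generation at a time, yields (v) by construction, and preserves both disjointness and (iii). The price is paid in (iv): the cubes stopped by the neighbor trigger (the class $\P_1$ in the paper) have no ``bad child,'' so your counting argument does not apply to them. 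The paper handles them separately by a chain argument showing that every such cube lies in $3\cu$ for some genuinely bad cube $\cu\in\P_2$, and then counting the triadic cubes of each size contained in $3\cu$. Without this extra bookkeeping your proof of (iv) would also be incomplete for the corrected collection, so the missing step is not a routine technicality but the structural heart of the proposition.
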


\begin{proof}
We proceed by a stopping time argument. We initialize the induction by taking $n_0 \in \Z$ large enough that $\Omega \subseteq \cu_0$ for some $\cu_0\in \T_{n_0}$ and $\essinf_{\cu_0} F \leq \size(\cu_0)$. We iteratively define a sequence $\{ \mathcal{Q}_k \}_{k\in\N}$ of subsets $\mathcal{Q}_k \subseteq \T_{n_0-k}$ satisfying, for every $\cu\in \mathcal{Q}_k$,
\begin{equation*}
\cu \cap \partial \Omega \neq \emptyset
\end{equation*}
in the following way. We take $\mathcal{Q}_0:=\{\cu_0\}$. If 
\begin{equation*} \label{}
\essinf_{\partial \Omega} F > \frac13 \size(\cu_0)
\end{equation*}
then we say that $\cu_0$ is a bad cube and we stop the procedure and set $\P:= \mathcal{Q}_0$ and $\mathcal{B}_0:=\mathcal{Q}_0$. Otherwise we set $\mathcal{G}_0=\mathcal{Q}_0$, $\mathcal B_0 = \emptyset$ and continue. Having chosen $\mathcal{Q}_0,\ldots,\mathcal{Q}_{k-1}$, and having split each of $\mathcal{Q}_j$ for $j\in\{0,\ldots,k-2\}$ into \emph{good cubes} $\mathcal{G}_j$ and \emph{bad cubes} $\mathcal{B}_j$ so that $\mathcal{Q}_j = \mathcal{G}_j\cup\mathcal{B}_j$ and $\mathcal{G}_j\cap\mathcal{B}_j=\emptyset$, we split $\mathcal{Q}_{k-1}$ into good cubes $\mathcal{G}_{k-1}$ and bad cubes $\mathcal{B}_{k-1}$ and define $\mathcal{Q}_k$ as follows. We take $\mathcal{G}_{k-1}$ to be the elements $\cu \in \mathcal{Q}_{k-1}$ satisfying both
\begin{equation}
\label{e.badnessg}
 \cu' \in \T_{n_0-k}, \ \ \cu'\cap \partial \Omega \neq \emptyset \ \  \mbox{and} \ \ \cu'\subseteq \cu 
 \implies 
 \essinf_{3\cu' \cap \partial \Omega} F \leq \frac13 \size(\cu)
\end{equation}
and
\begin{equation*}
 \cu' \in \mathcal{B}_{k-2} \ \ \implies \ \  \dist\left(\cu ,\cu'\right) > 0. 
\end{equation*}
The set of bad cubes~$\mathcal{B}_{k-1}$ is defined to be~$\mathcal{Q}_{k-1} \setminus \mathcal{G}_{k-1}$. We then define~$\mathcal{Q}_k$ to be the subcollection of~$\T_{n_0-k}$ consisting of those cubes which have nonempty intersection with~$\partial \Omega$ and are subcubes of some element of~$\mathcal{G}_{k-1}$. We stop the procedure at any point if the set of good cubes is empty. This will halt for before a finite~$k(\delta)\in\N$ owing to the assumption that~$F\geq \delta$. 

\smallskip

We now define $\P$ to be the collection of all bad cubes which intersect~$\partial \Omega$:
\begin{equation*}
\P:= \bigcup_{k\in\N} \left\{ \cu \in \mathcal{B}_{k} \,:\, \cu\cap \partial \Omega \neq \emptyset \right\}.
\end{equation*}
It is immediate from the construction that~$\P$ satisfies properties~(i), (ii), (iii) and~(v) in the statement of the proposition. We therefore have left to show only~(iv).

\smallskip

To prove~(iv), we collect all the elements of $\P$ of size $3^{n_0-(k-1)}$, for some $k\in\N$, which also satisfy~\eqref{e.badnessg}. Call this~$\P_1$, and set $\P_2 := \P \setminus \P_1$. Thus every cube of $\P_2 $ has a successor $\cu' \in \T$ such that $\cu' \cap \partial \Omega \neq \emptyset$ and  
\begin{equation*} 
F \geq \frac13 \size(\cu) \quad \mbox{a.e. in} \ 3\cu' \cap \partial \Omega. 
\end{equation*}
In particular, there is a small universal positive constant $c\in \left(0,\frac12\right]$ depending only on $d$ and the Lipschitz constant of $\Omega$ such that
\begin{equation} \label{e.cubes in P_2}
\forall \cu \in \P_2, 
\quad \H^{d-1}\left( \left\{ x \in \frac53\cu\cap \partial \Omega \,: \, F(x) \geq \frac13 \size(\cu) \right\} \right) \geq c \size(\cu)^{d-1}.
\end{equation}
Observe that property (v) ensures that the cube $\frac 53\cu\cap \partial \Omega$ is a subset of the union of the neighboring elements of $\P$ to $\cu$, that is, 
\begin{equation*}
\frac 53\cu\cap \partial \Omega 
\subseteq 
\bigcup\left\{ \cu' \in \P \,:\, \dist(\cu,\cu') = 0 \right\}.
\end{equation*}
We write $\cu\sim\cu'$ if $\cu,\cu'\in\P$ are neighboring cubes, in other words, if $\cu\neq\cu'$ and $\dist(\cu,\cu')=0$. Property (v) also ensures that every element of~$\P$ has at most~$3^{d-1}\cdot 2^d \leq C$ neighboring elements of~$\P$. 
This implies that 
\begin{align}
\label{e.blagen}
\lefteqn{
\sum_{k=0}^{n_0-n} \sum_{\cu\in \P_2\cap \mathcal{B}_k} \H^{d-1}\left( \left\{ x \in \frac53\cu\cap \partial \Omega \,: \, F(x) \geq \frac13 \size(\cu) \right\} \right)
} \qquad &  \\
& \leq \sum_{k=0}^{n_0-n} \  \sum_{\cu \in \P_2 \cap \mathcal{B}_k}  \ \sum_{\cu'\in\P,\, \cu\sim\cu'}  \H^{d-1}\left( \left\{ x \in \cu' \cap \partial \Omega \,: \, F(x) \geq \frac13 \size(\cu) \right\} \right)  \notag\\
& \leq C \sum_{\cu\in \P}  \H^{d-1}\left( \left\{ x \in \cu \cap \partial \Omega \,: \, F(x) \geq \frac19 3^n \right\} \right) \notag \\
& \leq C\H^{d-1}\left( \left\{ x\in \partial \Omega  \,:\,  F(x) \geq 3^{n-2}\right\} \right) \notag.
\end{align}
Next, for every $\cu \in \P_2$, let $\P_1(\cu)$ be the collection of elements of $\P_1$ which are subsets of $3\cu$. Observe that
\begin{equation}
\label{e.chainmunch}
\bigcup \P_1 \subset \bigcup_{\cu \in \P_2} 3\cu.
\end{equation}
Indeed, each cube $\cu$ in $\P_1$ is the neighbor of some cube~$\cu'\in\P$ with $\size(\cu') = 3\size(\cu)$. If $\cu'\not\in\P_2$, then it is also the neighbor of a cube in $\P$ that is three times larger. We continue this process, finding a chain of larger and larger cubes until we reach a cube $\cu''\in \P_2$ which is guaranteed to occur by construction. It is easy to check that $3\cu''$ contains the entire chain of cubes starting from $\cu$. This argument yields~\eqref{e.chainmunch}. 

\smallskip

Now, since $\Omega$ is a Lipschitz domain, we have that, for every $\cu'\in\P_2$,
\begin{equation}
\label{e.countings}
\# \{ \cu \in \P_1(\cu') \, : \, \size(\cu)  \geq 3^n \} \leq C  3^{-n(d-1)}\size(\cu')^{d-1}.
\end{equation}
Indeed, the $\H^{d-1}$ measure of $\partial \Omega \cap 3\cu'$ is at most $C\size(\cu')^{d-1}$ and therefore there exist at most $C\size(\cu')^{d-1} 3^{-n(d-1)}$ triadic cubes of size larger than $3^n$ which intersect it. 

\smallskip

We deduce from~\eqref{e.cubes in P_2},~\eqref{e.blagen},~\eqref{e.chainmunch} and~\eqref{e.countings} that
\begin{align*} 
 \lefteqn{\sum_{k=0}^{n_0-n} \# \bigcup_{\cu' \in \P_2 \cap \mathcal{B}_k}\{ \cu \in \P_1(\cu') \, : \, \size(\cu)  \geq 3^n \}
} \qquad \\ 
&  \leq C\sum_{k=0}^{n_0-n} 3^{(k-n)(d-1)} \# \left( \P_2 \cap \mathcal{B}_k  \right) \\ 
& \leq C 3^{-n (d-1)} \sum_{k=0}^{n_0-n} \sum_{\cu \in \P_2 \cap \mathcal{B}_k}   \H^{d-1}\left( \left\{ x \in \frac53\cu \cap \partial \Omega \,: \, F(x) \geq \frac13 \size(\cu) \right\} \right)  \\
& \leq C 3^{-n (d-1)} \H^{d-1}\left( \left\{x\in \partial \Omega\,:\,  F(x) \geq 3^{n-2}\right\} \right) 
\end{align*}
The statement (iv) follows from this. 
\end{proof}

We next construct a partition of unity of~ $\partial \Omega$ subordinate to the partition~$\left\{ \partial \Omega \cap \cu \,:\, \cu\in \P \right\}$ of~$\partial \Omega$ consisting of functions whose derivatives scale according to the size of each cube. The need to construct such a partition is the reason for requiring neighboring cubes of $\P$ to have comparable sizes in the stopping time argument, cf. property~(v) in the statement of Proposition~\ref{p.CZdecomposition}.

\begin{corollary}
\label{c.partitionofunity}
Assume the hypotheses of Proposition~\ref{p.CZdecomposition} and let $\P\subseteq\T$ be as in the conclusion. 
Then there exist a family $\left\{ \psi_\cu \in C^\infty(\Rd)\,:\, \cu\in\P\right\}$ of smooth functions and, for every $k \in \N$, there is $0<C(k,d,\Omega)<\infty$ satisfying
\begin{equation} 
\label{e.partitionofunity}
\left\{
\begin{aligned}
& 0\leq \psi_\cu \leq 1, \\  
& \supp\left(\psi_{\cu}\right) \subseteq \frac43 \cu, \\
& \sum_{\cu \in \P} \psi_\cu(x) = 1 \quad \mbox{for every} \ x\in \bigcup\P,
\quad \mbox{and} \quad \\
& \left| \nabla^k \psi_\cu \right| \leq C (\size(\cu))^{-k}.
\end{aligned}
\right.
\end{equation}
\end{corollary}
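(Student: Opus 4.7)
I would build $\psi_\cu$ by normalizing a family of scaled smooth bumps $\varphi_\cu$ attached to each cube of $\P$, the key point being to use property~(v) of Proposition~\ref{p.CZdecomposition} to upgrade the cube decomposition to a Whitney-type family with bounded overlap at comparable scales. To this end, fix once and for all a nonnegative $\eta \in C^\infty_c(\Rd)$ with $\eta \equiv 1$ on $[-\tfrac12,\tfrac12]^d$ and $\supp\eta \subset [-\tfrac23,\tfrac23]^d$, and set
\begin{equation*}
\varphi_\cu(x) := \eta\!\left(\frac{x - \bar{x}(\cu)}{\size(\cu)}\right), \qquad \cu\in\P.
\end{equation*}
Then $0 \leq \varphi_\cu \leq 1$, $\varphi_\cu \equiv 1$ on $\cu$, $\supp\varphi_\cu \subset \tfrac43\cu$, and $|\nabla^k\varphi_\cu| \leq C_k\size(\cu)^{-k}$ for every $k\in\N$. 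Set $\Phi := \sum_{\cu\in\P}\varphi_\cu$.

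The next step, and the geometric heart of the argument, is the following bounded-overlap and comparable-scales statement: for every $x\in\Rd$,
\begin{equation*}
\#\left\{\cu\in\P \,:\, x \in \tfrac{4}{3}\cu\right\} \leq C(d),
\end{equation*}
and whenever $x \in \tfrac{4}{3}\cu \cap \tfrac{4}{3}\cu'$ with $\cu,\cu'\in\P$ one has $\tfrac13 \leq \size(\cu)/\size(\cu') \leq 3$. To see this, the containment forces $\dist(\cu,\cu') \leq \tfrac16(\size(\cu)+\size(\cu'))$; assuming $\size(\cu')\leq\size(\cu)$ without loss of generality, the triadic geometry forces the ancestor of $\cu'$ at scale $\size(\cu)$ to share a face, edge, or corner with $\cu$, and then property~(v) applied to $\cu$ and the subcube of $\P$ inside that ancestor closest to $\cu$ forces the relevant scale to remain at least $\size(\cu)/3$. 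Consequently $\Phi\in C^\infty(\Rd)$ is a locally finite sum, $\Phi\leq C(d)$, and
\begin{equation*}
|\nabla^k\Phi(x)| \leq C_k\size(\cu)^{-k} \quad \text{whenever } x\in\tfrac{4}{3}\cu,\;\cu\in\P,
\end{equation*}
while $\Phi \geq 1$ on $\bigcup\P$ since each $x\in\bigcup\P$ lies in a unique $\cu\in\P$ where $\varphi_\cu(x)=1$.

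Now fix a smooth nondecreasing $\mu:\R\to[1,\infty)$ with $\mu(t)=1$ for $t\leq\tfrac12$ and $\mu(t)=t$ for $t\geq 1$, and set
\begin{equation*}
\widetilde\Phi := \mu\circ\Phi, \qquad \psi_\cu := \frac{\varphi_\cu}{\widetilde\Phi}.
\end{equation*}
Since $\widetilde\Phi \in C^\infty(\Rd)$ with $\widetilde\Phi \geq 1$ everywhere and $\widetilde\Phi = \Phi$ wherever $\Phi\geq 1$ (in particular on $\bigcup\P$), one reads off immediately that $0 \leq \psi_\cu \leq \varphi_\cu \leq 1$, $\supp\psi_\cu \subset \tfrac43\cu$, and $\sum_\cu \psi_\cu = \Phi/\widetilde\Phi = 1$ on $\bigcup\P$. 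The derivative estimate $|\nabla^k\psi_\cu| \leq C\size(\cu)^{-k}$ follows from the quotient rule together with the previous scale-invariant bounds on $\varphi_\cu$ and $\Phi$, the boundedness of the range of $\Phi$ (which keeps the values $\mu^{(j)}(\Phi)$ controlled), and Faà di Bruno applied to the composition $\widetilde\Phi=\mu\circ\Phi$.

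\textbf{Main obstacle.} The only non-routine step is the bounded-overlap and comparable-scales claim for $\{\tfrac43\cu\}_{\cu\in\P}$; this requires a short but careful triadic geometry argument that uses property~(v) to rule out cubes of $\P$ much smaller than $\cu$ that would place a refined grandchild directly against $\cu$ in violation of (v). Once this is in hand, the rest is a standard Whitney-style normalization.
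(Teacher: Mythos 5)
Your construction is essentially the paper's own proof: the paper likewise forms scaled bumps $\zeta_\cu$ (there obtained by mollifying $\indc_\cu$ at scale $\size(\cu)$), sums them, invokes Proposition~\ref{p.CZdecomposition}(v) for exactly the same bounded-overlap/comparable-scales fact, and normalizes by the sum. Your extra regularization $\widetilde\Phi=\mu\circ\Phi$ to avoid dividing by a vanishing denominator off $\bigcup\P$ is a minor (and slightly more careful) variant of the paper's direct quotient $\psi_\cu=\zeta_\cu/\zeta$, and the rest matches.
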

\begin{proof}
Select $\eta \in C^\infty(\Rd)$ satisfying 
\begin{equation*}
\eta\geq 0, \quad
\supp\eta \subseteq B_{\frac13}, \quad 
\eta \geq 1 \ \mbox{on} \ B_{\frac 14}, \quad
\int_{\Rd} \eta(x)\,dx = 1, \quad \mbox{and} \quad
\left| \nabla^k \eta \right| \leq C20^k.
\end{equation*}
For $r>0$, set $\eta_r(x):= r^{-d} \eta\left( \frac xr\right)$. For each $\cu\in\P$, define
\begin{equation*} \label{}
\zeta_\cu(x)
:=\indc_{\cu} \ast \eta_{\size(\cu)}(x)
= \int_{\cu} \eta_{\size(\cu)} (z-x)\,dz. 
\end{equation*}
By construction, we have that $\zeta_\cu$ satisfies, for every $k\in\N$,
\begin{equation} 
\label{e.puzeta}
 0\leq \zeta_\cu \leq 1, \ \ 
 \supp\left(\zeta_\cu\right) \subseteq \frac43 \cu, 
\ \ \mbox{and} \ \  \left| \nabla^k \zeta_\cu \right| \leq C^k (\size(\cu))^{-k}.
\end{equation}
Since $\eta \geq 1$ on $B_{\frac14}$, we have that 
\begin{equation*} \label{}
\zeta_\cu \geq c \quad \mbox{on} \ \cu. 
\end{equation*}
The previous line, the fact that $\supp(\zeta_\cu) \subseteq \frac43 \cu$ and Proposition~\ref{p.CZdecomposition}(v) imply that the function
\begin{equation*} \label{}
\zeta:= \sum_{\cu\in \P} \zeta_\cu
\end{equation*}
satisfies
\begin{equation*} \label{}
c \leq \zeta\leq C \quad \mbox{in} \ \bigcup \P. 
\end{equation*}
We also get by Proposition~\ref{p.CZdecomposition}(v) that 
\begin{equation*} \label{}
\left| \nabla^k \zeta \right| \leq C^k (\size(\cu))^{-k} \quad \mbox{in} \ \bigcup \P. 
\end{equation*}
Now define, for each $\cu\in\P$, 
\begin{equation*} \label{}
\psi_\cu:= \frac{\zeta_\cu}{\zeta}. 
\end{equation*}
It is immediate from the above construction that $\{\psi_\cu\}_{\cu\in\P}$ satisfies each of the properties in~\eqref{e.partitionofunity}. Note that the bound $\psi_\cu\leq 1$ follows from the third line of~\eqref{e.partitionofunity} and $\psi_\cu\geq 0$. 
This completes the argument. 
\end{proof}

\section{Half-space boundary layer problem}
\label{s.halfspace}

The analysis of the boundary layer in the domain $\Omega$ and the definition of the homogenized boundary condition $\bar{g}$ are based on an approximation procedure involving half-space boundary layer problems
\begin{equation}\label{eq-bdarylayerhp}
\left\{ 
\begin{aligned}
& -\nabla \cdot\left( \a(y)\nabla V\right) = 0   & \mbox{in} &\ D_{n}(a), \\
& V = V_0(y) & \mbox{on} &\ \partial D_{n}(a),
\end{aligned}
\right. 
\end{equation} 
where $n\in\partial B_1$, $a\in\R$, and $V_0$ is a $\Z^d$-periodic function. Full understanding of these boundary layers has been achieved in the works \cite{GVM1,GVM2,BLtail}. The analysis of \eqref{eq-bdarylayerhp} is very sensitive to the Diophantine properties of the normal $n$. As usual, let $M$ be an orthogonal matrix such that $Me_d=n$, and $N$ be the matrix of the $d-1$ first columns of $M$.

\smallskip

The first proposition addresses the existence and asymptotic behavior of $V$ for an arbitrary normal $n$. The derivation of the convergence away from the boundary of the half-space is based on the fact that $g$ is quasiperiodic along the boundary.

\begin{proposition}[{\cite[Theorem 1.2]{BLtail}}]
\label{eq.theobl}
For any $V_0\in C^\infty(\mathbb T^d)$, there exists a unique $C^\infty\!\left(\overline{D}_{n}(a)\right)$ solution $V$ 
to \eqref{eq-bdarylayerhp} such that
\begin{equation*}
\|\nabla V\|_{L^\infty(\{y\cdot n-t>0\})}\stackrel{t\rightarrow\infty}{\longrightarrow}0\qquad \mbox{and}\qquad \int_a^\infty\|\nabla V\cdot n\|^2_{L^\infty(\{y\cdot n-t=0\})}\, dt<\infty.
\end{equation*}
Moreover, there exists a boundary layer tail $V^\infty\in\R^L$ such that
\begin{equation}\label{eq-CV}
V(y)\stackrel{y\cdot n\rightarrow\infty}{\longrightarrow}V^\infty.
\end{equation}
When $n\notin\R\Zd$, $V^\infty$ is independent of $a$.
\end{proposition}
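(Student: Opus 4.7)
The plan follows \cite{BLtail}. First, parametrize $\partial D_n(a)$ by writing $y = Nz' + an$ with $z' \in \R^{d-1}$, where $N$ is the matrix of the first $d-1$ columns of an orthogonal matrix $M$ sending $e_d$ to $n$. Under this parametrization the trace of $V_0$ becomes a quasi-periodic function of $z'$: expanding the $\Zd$-periodic $V_0$ in Fourier series yields
\[
V_0(Nz'+an) = \sum_{\xi \in \Zd} \widehat{V_0}(\xi)\, e^{2\pi i a (\xi\cdot n)}\, e^{2\pi i (N^T\xi)\cdot z'},
\]
with rapidly decaying coefficients by smoothness. A key feature is that the normal frequencies $\xi\cdot n$ are decoupled from the tangential ones $N^T\xi$: the former modulate the data while the latter drive tangential oscillations on each slice.

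\emph{Existence, uniqueness, and decay.} I would construct $V$ by truncating the half-space at height $R$, solving the resulting boundary-value problem, and passing to $R\to\infty$ through Saint-Venant energy estimates on vertical slabs. For each tangential frequency $N^T\xi$, the corresponding half-space mode decays exponentially at rate comparable to $|N^T\xi|$ in the normal direction, so the smoothness of $V_0$ gives arbitrary polynomial decay of $\|\nabla V\|_{L^\infty(\{y\cdot n>t\})}$ as $t\to\infty$ as well as square integrability of $t\mapsto \|\nabla V\cdot n\|_{L^\infty(\partial D_n(t))}$. Uniqueness in this class reduces to an energy identity: the difference of two such solutions has zero trace, decaying gradient and integrable normal flux, hence must vanish.

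\emph{Existence of the tail.} The decay of $\nabla V$ implies that the tangential oscillation of $V$ on $\partial D_n(t)$ vanishes as $t\to\infty$; meanwhile, any tangential average $t\mapsto \langle V\rangle_t$ has derivative essentially $\langle \nabla V\cdot n\rangle_t$, which is square-integrable, so $\langle V\rangle_t$ is Cauchy. Its limit is $V^\infty$, and $V(y)\to V^\infty$ as $y\cdot n\to\infty$.

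\emph{Irrational independence.} The translation $y\mapsto y+k$, $k\in\Zd$, carries solutions of \eqref{eq-bdarylayerhp} with parameter $a$ to solutions with parameter $a + k\cdot n$, by $\Zd$-periodicity of $\a$ and $V_0$; thus $V^\infty(a) = V^\infty(a + k\cdot n)$ for every $k\in\Zd$. Continuous dependence of $V^\infty$ on $a$ (obtained from the construction) combined with the density of $\{k\cdot n : k\in\Zd\}$ in $\R$ when $n\notin\R\Zd$ yields independence of $a$. The main obstacle is the tail step: without a Diophantine condition on $n$, the convergence $V\to V^\infty$ is purely qualitative and relies delicately on the interplay between the quasi-periodic Fourier structure of the trace and the integrated flux bound. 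This is the technical heart of \cite{BLtail}.
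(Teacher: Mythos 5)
First, a point of comparison: the paper offers no proof of this proposition at all --- it is imported verbatim from \cite[Theorem 1.2]{BLtail} --- so there is no internal argument to measure your sketch against; what follows assesses it on its own terms.

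The genuine gap is in your existence-and-decay step. You claim that each tangential mode $N^T\xi$ decays exponentially in the normal direction at rate comparable to $|N^T\xi|$, and that the smoothness of $V_0$ therefore yields arbitrary polynomial decay of $\|\nabla V\|_{L^\infty(\{y\cdot n>t\})}$. This cannot be right, for two reasons. First, for a general unit normal $n$ with no Diophantine condition, $|N^T\xi|$ may be arbitrarily small compared to any negative power of $|\xi|$, so even with rapidly decaying Fourier coefficients the superposition of modes need not decay at any polynomial rate; the paper records immediately after the statement that in general the convergence in \eqref{eq-CV} can be arbitrarily slow, and the quantitative decay \eqref{eq-decaydioph} of Proposition~\ref{p.halfspace} is available only under the Diophantine condition \eqref{eq-dioph}. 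The proposition deliberately asserts only the qualitative facts $\|\nabla V\|_{L^\infty}\to0$ and square-integrability of the normal flux, and even these are nontrivial. Second, the mode-by-mode analysis presupposes that the operator diagonalizes in the tangential Fourier variable; this holds for constant coefficients but fails here, since $-\nabla\cdot(\a(y)\nabla\,\cdot\,)$ with oscillating periodic $\a$ couples all frequencies --- which is exactly why the correct framework is the lifted degenerate system \eqref{eq-bdarylayertorus} on $\mathbb{T}^d\times(a,\infty)$, and why the convergence to a tail for arbitrary irrational $n$ in \cite{BLtail} is the technical heart of that paper, resting on ergodicity/almost-periodicity arguments rather than exponential decay of individual modes. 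Your uniqueness argument and your derivation of the independence of $V^\infty$ from $a$ via $\Zd$-translations and the density of $\{k\cdot n : k\in\Zd\}$ are the standard and correct ideas, though the continuity of $V^\infty$ in $a$ that the latter requires is itself a delicate point and cannot simply be read off ``from the construction.''
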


Some examples show that in general the convergence in \eqref{eq-CV} can be arbitrarily slow. When the normal, in addition, satisfies the Diophantine condition~\eqref{eq-dioph}, one can prove a rate of convergence.

\smallskip

We define $\mathcal V=\mathcal V(\theta,t)$ as the solution of
\begin{equation}\label{eq-bdarylayertorus}
\left\{ 
\begin{aligned}
& -\left(\begin{array}{c}N^T\nabla_\theta\\ \partial_t\end{array}\right)\cdot \left\{\b(\theta+tn)\left(\begin{array}{c}N^T\nabla_\theta\\ \partial_t\end{array}\right) \mathcal V \right\}= 0,   & \theta\in\mathbb T^d,\, t>a, \\
& \mathcal V = \mathcal V_0(\theta), & \theta\in\mathbb T^d,\, t=a.
\end{aligned}
\right. 
\end{equation} 
Here $\b$ is the coefficient matrix defined by $\b=M^T\a(\cdot)M$. Existence and uniqueness properties of $\mathcal V$ and asymptotic behavior when $t\rightarrow\infty$ are given below in Proposition \ref{p.halfspace}. Notice that if $\mathcal V$ is a solution of \eqref{eq-bdarylayertorus}, then $V$ defined by $V(Mz)=\mathcal V(Nz',z_d)$ is a solution to \eqref{eq-bdarylayerhp}.

\begin{proposition}[{\cite[Proposition 2.6]{GVM2}}]
\label{p.halfspace}
Fix $n\in\partial B_1$. For any $\mathcal V_0\in C^\infty(\mathbb T^d)$, there exists a unique solution $\mathcal V\in C^\infty(\mathbb T^d\times [a,\infty))$ such that for all $\alpha\in\N^d$, $k\in\N$, there exists a positive constant $C(d,L,\lambda,\alpha,k,\a,\mathcal V_0)<\infty$,
\begin{equation*}
\int_{\mathbb T^d}\int_a^\infty|\partial_\theta^\alpha\partial_t^{k}N^T\nabla_\theta\mathcal V|^2+|\partial_\theta^\alpha\partial_t^{k+1}\mathcal V|^2\,d\theta\,dt\leq C.
\end{equation*}
If~$n$ satisfies the Diophantine condition \eqref{eq-dioph} with positive constant $\A=\A(n)$, then for all $\alpha\in\N^d$, $k,\, m\in\N$, there exists a constant $C(d,L,\lambda,\alpha,k,m,\a,\mathcal V_0,\kappa)<\infty$ such that for all $\theta\in\mathbb T^d$, for all $T>a$,
\begin{equation}\label{eq-decaydioph}
\int_{\mathbb T^d}\int_T^\infty|\partial_\theta^\alpha\partial_t^kN^T\nabla_\theta\mathcal V|^2+|\partial_\theta^\alpha\partial_t^{k+1}\mathcal V|^2\,d\theta\, dt\leq \frac{C}{1+\A^m|T-a|^m}.
\end{equation}
\end{proposition}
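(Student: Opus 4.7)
\emph{Proof plan.} First I would establish the link with the half-space problem \eqref{eq-bdarylayerhp} through the change of variables $V(Nz' + tn) := \mathcal V(Nz', t)$. Since $V$ is $\Zd$-periodic and the data $\mathcal V_0 \in C^\infty(\mathbb T^d)$ pulls back to a $\Zd$-periodic $V_0$ on $\partial D_n(a)$, existence and uniqueness of $\mathcal V \in C^\infty(\mathbb T^d \times [a, \infty))$, as well as the convergence to a tail $V^\infty$ as $t \to \infty$, follow from Proposition~\ref{eq.theobl} together with interior and boundary Schauder estimates for uniformly elliptic systems with smooth coefficients. The first (non-decaying) bound is obtained by testing \eqref{eq-bdarylayertorus} against $\mathcal V - V^\infty$ on $\mathbb T^d \times [a, \infty)$: the boundary contribution at $t = \infty$ vanishes by \eqref{eq-CV} and the gradient decay in Proposition~\ref{eq.theobl}, while the uniform ellipticity of $\b = M^T \a(\cdot)M$ in the $(N^T \nabla_\theta, \partial_t)$ variables controls the bulk. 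Higher derivatives $\partial_\theta^\alpha \partial_t^k$ are handled by differentiating the equation and iterating the same identity, the boundary terms at $t=a$ being absorbed using the smoothness of $\mathcal V_0$.

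The key new ingredient under the Diophantine condition \eqref{eq-dioph} is a Poincar\'e-type interpolation inequality on $\mathbb T^d$: for any mean-zero $f \in C^\infty(\mathbb T^d)$ and any $s > 0$,
\begin{equation*}
\|f\|_{L^2(\mathbb T^d)}^2 \leq C_s \, \A^{-2s/(s+\kappa)} \|N^T \nabla_\theta f\|_{L^2(\mathbb T^d)}^{2s/(s+\kappa)} \|f\|_{H^s(\mathbb T^d)}^{2\kappa/(s+\kappa)}.
\end{equation*}
This is proved by splitting the Fourier sum over $\xi \in \Zd\setminus\{0\}$ at a scale $R$: for $|\xi| \leq R$ the bound $|N^T\xi| \geq \A |\xi|^{-\kappa}$ gives $\sum_{|\xi| \leq R} |\hat f(\xi)|^2 \leq \A^{-2} R^{2\kappa} \|N^T \nabla_\theta f\|_{L^2}^2$; for $|\xi| > R$ one uses $R^{-2s}\|f\|_{H^s}^2$; optimizing in $R$ yields the displayed inequality. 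Since $\mathcal V$ is $C^\infty$, the exponent $s$ can be taken arbitrarily large, so that the $H^s$-factor costs only a negligible power.

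To conclude, I would run an iterated energy argument for the tail energy $\mathcal{E}(T) := \int_T^\infty \int_{\mathbb T^d} |N^T \nabla_\theta \mathcal V|^2 + |\partial_t \mathcal V|^2\, d\theta\, dt$. Testing \eqref{eq-bdarylayertorus} against $\mathcal V - V^\infty$ on $[T, \infty)$ produces the identity $\mathcal{E}(T) \leq C \|\partial_t \mathcal V(\cdot,T)\|_{L^2} \|\mathcal V(\cdot,T) - V^\infty\|_{L^2}$. By \eqref{eq-CV}, the function $\mathcal V(\cdot,T) - V^\infty$ has zero mean on $\mathbb T^d$, so the Poincar\'e-type inequality applies; together with $\|\partial_t \mathcal V(\cdot,T)\|_{L^2}^2 \lesssim -\mathcal{E}'(T)$ and $\|N^T \nabla_\theta \mathcal V(\cdot,T)\|_{L^2}^2 \lesssim -\mathcal{E}'(T)$, this yields a differential inequality of the form $\mathcal{E}(T) \leq C\A^{-\gamma}(-\mathcal{E}'(T))^{\alpha} \bigl(\text{bounded }H^s\text{ norm}\bigr)^{1-\alpha}$ with $\alpha > \tfrac12$. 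Elementary ODE comparison then forces polynomial decay $\mathcal{E}(T) \leq C(1 + \A^{\beta}|T-a|^{\beta})^{-1}$ for some $\beta > 0$. Finally, applying the same scheme to the derivative quantities $\partial_\theta^\alpha \partial_t^k \mathcal V$ (which solve analogous equations with smooth forcing inherited from $\b$ and $\mathcal V_0$) and chaining the resulting decay rates upgrades the exponent to any prescribed $m$, at the expense of a constant depending on $m$.

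The main obstacle is tracking the two compounded losses through the bootstrap: each Poincar\'e step costs derivatives (the $H^s$ factor), and the energy identity yields coercivity only in the degenerate tangential directions $N^T \nabla_\theta$, not in the full $\nabla_\theta$ on $\mathbb T^d$. Both losses are absorbed using the $C^\infty$ smoothness of $\a$ and $\mathcal V_0$, but must be quantified carefully at each iteration. A secondary issue is ensuring that the Fourier inequality is applied only to genuinely mean-zero quantities: this works at leading order because $V^\infty$ is exactly the $\mathbb T^d$-mean of $\mathcal V$ at infinity, and for $|\alpha| \geq 1$ the derivatives $\partial_\theta^\alpha \mathcal V$ are automatically mean-zero.
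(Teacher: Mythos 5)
The paper does not reprove this statement: it is quoted verbatim from G\'erard-Varet--Masmoudi and the reader is sent to \cite[Proposition 2.6, pp.~149--152]{GVM2} for the proof. Your core mechanism for the decay estimate is the same as theirs: a Fourier-split Poincar\'e inequality $\|f\|_{L^2(\mathbb T^d)}^2\leq \A^{-2}R^{2\kappa}\|N^T\nabla_\theta f\|_{L^2}^2+R^{-2s}\|f\|_{H^s}^2$ for mean-zero $f$, optimized in $R$, fed into a nonlinear differential inequality for the tail energy $\mathcal E(T)$, with the derivative loss absorbed by the $C^\infty$ smoothness and $s$ taken large to reach any polynomial order $m$. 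Your exponent bookkeeping is consistent with the stated bound; note that a single pass with $s$ large already yields arbitrary $m$ (no ``chaining'' of decay rates is needed for the exponent --- the induction is only needed to propagate the decay to the higher derivatives $\partial_\theta^\alpha\partial_t^k\mathcal V$).

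Three points need repair. First, your existence argument runs in the wrong direction: the image of $z'\mapsto Nz'\bmod \Z^d$ is a proper subgroup of $\mathbb T^d$ (dense only when $n\notin\R\Z^d$), so the half-space solution $V$ of Proposition~\ref{eq.theobl} does not determine a function $\mathcal V$ on all of $\mathbb T^d\times[a,\infty)$; moreover the first estimate is asserted for \emph{every} $n\in\partial B_1$, including rational directions. One must construct $\mathcal V$ directly on the torus by a variational argument for the degenerate operator $(N^T\nabla_\theta,\partial_t)\cdot\b(N^T\nabla_\theta,\partial_t)$ (coercive in only $d$ of the $d+1$ directions, with a Hardy inequality to lift the boundary data, exactly as in Lemma~\ref{lem.estV} of this paper), and only then recover $V$ by restriction. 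Second, $\mathcal V(\cdot,T)-V^\infty$ is \emph{not} mean-zero at finite $T$; only the limit of the slice averages equals $V^\infty$. You must subtract $\langle\mathcal V(\cdot,T)\rangle_{\mathbb T^d}$ instead, which is harmless: adding a constant to the test function leaves the bulk term unchanged, and the total flux $\int_{\mathbb T^d}e_{d}\cdot\b\,(N^T\nabla_\theta,\partial_t)\mathcal V\,d\theta$ through each slice is independent of $t$ and hence vanishes, so the constant contributes nothing to the boundary term either. Third, the uniform-in-$T$ bound on $\|\mathcal V(\cdot,T)\|_{H^s(\mathbb T^d)}$ that your interpolation step consumes is not an immediate consequence of the first display (which controls only the degenerate gradient and $\partial_t\mathcal V$ in $L^2_t$, not the full tangential gradient pointwise in $t$); it requires a separate trace/regularity argument and should not be taken for granted.
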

For a proof see \cite[Proposition 2.6, pages 149-152]{GVM2}. Observe that~\eqref{eq-decaydioph} gives in particular for all $\alpha\in\N^d$, $k,\, m\in\N$, there exists a positive constant $C(d,L,\lambda,\alpha,k,m,\a,\mathcal V_0,\kappa)<\infty$ such that for all $\theta\in\mathbb T^d$, for all $t>a$,
\begin{equation}\label{eq-decaydiophLinfty}
\left|\partial_\theta^\alpha\partial_t^kN^T\nabla_\theta\mathcal V(\theta,t)\right|+\left|\partial_\theta^\alpha\partial_t^{k+1}\mathcal V(\theta,t)\right|\leq \frac{C}{1+\A^m|t-a|^m}.
\end{equation}
This simply follows from Sobolev's embedding theorem. Moreover, for all $\alpha\in\N^d$, $|\alpha|\geq 1$, for all $k,\, m\in\N$, for all $\theta\in\mathbb T^d$, for all $t>a$,
\begin{equation}\label{eq-decaydiophLinftynoderiv}
\left|\partial_\theta^\alpha\partial_t^k\mathcal V(\theta,t)\right|\leq \frac{C}{\A\left(1+\A^m|t-a|^m\right)},
\end{equation}
with $C(d,L,\lambda,\alpha,k,m,\a,\mathcal V_0,\kappa)<\infty$.

\smallskip

We aim now at investigating the dependence of $\mathcal V$ in terms of the normal $n$.
Our estimate below will be used to approximate the homogenized boundary data by piecewise constant data coming from the computation of boundary layers in half-spaces with good Diophantine properties. A Lipschitz estimate for the boundary layer tails appeared in~\cite[Corollary 2.9]{GVM2}, but under the assumption that both $n_1$ and $n_2$ are Diophantine normals with the same constant~$\A$ in~\eqref{eq-dioph}. Here we focus on the continuity of~$\mathcal V$ with respect to $n$. Our goal is now to prove a series of lemmas, which are tools to prove the regularity result for $\bar{g}$ stated in Theorem \ref{t.main}. These lemmas will be used in section \ref{sec.outline}. We only assume that~$n_2$ satisfies the Diophantine condition~\eqref{eq-dioph}. The argument follows that of~\cite{GVM2}, but we give full details here for the sake of completeness.

\smallskip

Let $n_1, n_2\in\partial B_1$ be two unit vectors. Assume that $n_2$ is Diophantine in the sense of \eqref{eq-dioph} with constant $\A=\A(n_2)\in (0,1]$. Let $M_1$, $M_2$ be two orthogonal matrices such that $M_1e_d=n_1$ and $M_2e_d=n_2$. We denote by $N_1$ and $N_2$ the matrices of the $d-1$ first columns of $M_1$ and $M_2$. The functions $\mathcal V_1=\mathcal V_1(\theta,t)$ and $\mathcal V_2=\mathcal V_2(\theta,t)$ are the unique solutions of \eqref{eq-bdarylayertorus} with $N$ replaced respectively by $N_1$ or $N_2$, and $\b$ replaced respectively by $\b_1$ and $\b_2$.

\smallskip

Now, the difference $\mathcal V:=\mathcal V_1-\mathcal V_2$ solves
\begin{equation}\label{eq-bdarylayertorusdiff}
\left\{ 
\begin{aligned}
& -\left(\begin{array}{c}N_1^T\nabla_\theta\\ \partial_t\end{array}\right) \cdot \left\{\b_1(\theta+tn_1)\left(\begin{array}{c}N_1^T\nabla_\theta\\ \partial_t\end{array}\right) \mathcal V\right\} =F,   & t>a, \\
& \mathcal V = 0, & t=a.
\end{aligned}
\right. 
\end{equation} 
The right-hand side is
\begin{align*}
F & = \left(\begin{array}{c}N_1^T\nabla_\theta\\ \partial_t\end{array}\right) \cdot \left\{\b_1(\theta+tn_1)-\b_2(\theta+tn_2)\right\}\left(\begin{array}{c}N_1^T\nabla_\theta\\ \partial_t\end{array}\right) \mathcal V_2 \\
& \quad +\left(\begin{array}{c}N_1^T\nabla_\theta\\ \partial_t\end{array}\right) \cdot \b_2(\theta+tn_2)\left(\begin{array}{c}N_1^T\nabla_\theta\\ \partial_t\end{array}\right) \mathcal V_2 \\
& \quad  -\left(\begin{array}{c}N_2^T\nabla_\theta\\ \partial_t\end{array}\right) \cdot \b_2(\theta+tn_2)\left(\begin{array}{c}N_2^T\nabla_\theta\\ \partial_t\end{array}\right) \mathcal V_2\\
& =\left(\begin{array}{c}N_1^T\nabla_\theta\\ \partial_t\end{array}\right)\cdot G+H,
\end{align*}
where $G:=G_1+G_2+G_3$ and $H$ are defined by
\begin{equation*}
\begin{aligned}
G_1&:=\left\{\b_1^{\cdot,\leq d}(\theta+tn_1)-\b_2^{\cdot,\leq d}(\theta+tn_2)\right\}(N_1^T-N_2^T)\nabla_\theta\mathcal V_2,\\
G_2&:=\left\{\b_1^{\cdot,\leq d}(\theta+tn_1)-\b_2^{\cdot,\leq d}(\theta+tn_2)\right\}N_2^T\nabla_\theta\mathcal V_2,\\
G_3&:=\b_2^{\cdot,\leq d}(\theta+tn_2)(N_1^T-N_2^T)\nabla_\theta\mathcal V_2,\\
H&:=(N_1^T-N_2^T)\nabla_\theta\cdot\b_2^{\leq d,\cdot}(\theta+tn_2)\left(\begin{array}{c}N_2^T\nabla_\theta\\ \partial_t\end{array}\right)\mathcal V_2,
\end{aligned}
\end{equation*}
where $\b_1^{\cdot,\leq d}=\left( {\b_1}^{\alpha\beta}_{ij}\right)_{i,j=1,\ldots,L}^{\alpha=1,\ldots,d,\,\beta=1,\ldots,d-1}$ is a submatrix of $\b_1$. The definitions of the submatrices $\b_2^{\cdot,\leq d}$ and $\b_2^{\leq d,\cdot}$ are self-explanatory. Notice that the right-hand side only involves $\mathcal V_2$. Therefore, we can use the decay estimate \eqref{eq-decaydiophLinftynoderiv} involving only the Diophantine constant $\A$. We have for all $\alpha\in\N^d$, $k,\, m\in\N$, for all $\theta\in\mathbb T^d$, for all $t>a$,
\begin{equation}\label{e.estH}
\left|\partial_\theta^\alpha\partial_t^kH(\theta,t)\right|\leq \frac{C|n_1-n_2|}{1+\A^m|t-a|^m}
\end{equation}
and the following estimates for $G_1,\, G_2$ and $G_3$,
\begin{align}
&\left|\partial_\theta^\alpha\partial_t^kG_1(\theta,t)\right|\leq \frac{C|n_1-n_2|^2|t-a|}{\A(1+\A^m|t-a|^m)},\label{e.estG_1}\\
&\left|\partial_\theta^\alpha\partial_t^kG_2(\theta,t)\right|\leq \frac{C|n_1-n_2||t-a|}{1+\A^m|t-a|^m},\label{e.estG_2}\\
&\left|\partial_\theta^\alpha\partial_t^kG_3(\theta,t)\right|\leq \frac{C|n_1-n_2|}{\A(1+\A^m|t-a|^m)},\label{e.estG_3}
\end{align}
with $C(d,L,\lambda,\alpha,k,m,\a,\mathcal V_0,\kappa)<\infty$.

\begin{lemma}
\label{lem.estV}
Let $\mathcal V$ be the solution of~\eqref{eq-bdarylayertorusdiff}.
For every $s\in\N$, there exists a constant $C(d,L,\lambda,s,\a)<\infty$ such that
\begin{multline}\label{e.estV}
\int_a^\infty\|N_1^T\nabla_\theta\mathcal V\|_{H^s(\mathbb T^d)}^2+\|\partial_t\mathcal V\|_{H^s(\mathbb T^d)}^2\,dt\\
\leq C\left(\int_a^\infty\|(t-a)H\|_{H^s(\mathbb T^d)}^2+\|G\|_{H^s(\mathbb T^d)}^2\,dt\right).
\end{multline}
\end{lemma}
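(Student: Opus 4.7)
\textbf{Proof plan for Lemma \ref{lem.estV}.}

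My approach is a standard energy estimate combined with a one-dimensional Hardy inequality to handle the non-divergence source term $H$, and then induction on $s$ to reach arbitrary Sobolev regularity in the tangential variable $\theta$. For the base case $s=0$, I would test the equation \eqref{eq-bdarylayertorusdiff} against $\mathcal V$ itself and integrate over $\mathbb T^d\times(a,\infty)$. The homogeneous boundary condition $\mathcal V(\cdot,a)=0$ kills the boundary term at $t=a$, while the boundary term at $t=\infty$ vanishes thanks to the finite-energy estimate of Proposition~\ref{p.halfspace} applied to $\mathcal V_1$ and $\mathcal V_2$ separately (which also controls their difference and forces $\|\mathcal D\mathcal V(\cdot,T)\|_{L^2(\mathbb T^d)}\to 0$ along a sequence $T\to\infty$, with $\|\mathcal V(\cdot,T)\|_{L^2(\mathbb T^d)}$ uniformly bounded by the boundary-layer tail bound). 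Using ellipticity of $\b_1$, the inequality collapses to
\begin{equation*}
\int_a^\infty\!\!\int_{\mathbb T^d}|\mathcal D\mathcal V|^2\,d\theta\,dt\leq C\left|\int G:\mathcal D\mathcal V\right|+C\left|\int H\cdot\mathcal V\right|,
\end{equation*}
where $\mathcal D=(N_1^T\nabla_\theta,\partial_t)^T$.

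The term involving $G$ is estimated by Cauchy--Schwarz and absorbed. The $H$ term is the delicate one: there is no divergence structure to exploit, so I rewrite
\begin{equation*}
\int_a^\infty\!\!\int H\cdot\mathcal V\,d\theta\,dt=\int_a^\infty\!\!\int (t-a)H\cdot\frac{\mathcal V}{t-a}\,d\theta\,dt,
\end{equation*}
apply Cauchy--Schwarz, and then invoke the one-dimensional Hardy inequality in $t$ on $(a,\infty)$: since $\mathcal V(\theta,a)=0$ for every $\theta$,
\begin{equation*}
\int_a^\infty\frac{|\mathcal V(\theta,t)|^2}{(t-a)^2}\,dt\leq 4\int_a^\infty|\partial_t\mathcal V(\theta,t)|^2\,dt.
\end{equation*}
Integrating over $\theta$ and using Young's inequality to absorb $\|\mathcal D\mathcal V\|_{L^2}$ into the left-hand side yields \eqref{e.estV} for $s=0$.

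For general $s\in\N$ I argue by induction on $|\gamma|$ where $\gamma$ is a multi-index of tangential derivatives. Applying $\partial_\theta^\gamma$ to the equation and using that $\partial_\theta^\gamma$ commutes with $\mathcal D$, a Leibniz expansion of $\partial_\theta^\gamma(\b_1(\theta+tn_1)\mathcal D\mathcal V)$ shows that $\partial_\theta^\gamma\mathcal V$ solves the same half-space equation with zero boundary data and modified right-hand sides
\begin{equation*}
\tilde G=\partial_\theta^\gamma G+\sum_{\beta<\gamma}\binom{\gamma}{\beta}\bigl(\partial_\theta^{\gamma-\beta}\b_1(\theta+tn_1)\bigr)\mathcal D\partial_\theta^\beta\mathcal V,\qquad \tilde H=\partial_\theta^\gamma H.
\end{equation*}
Applying the base case to $\partial_\theta^\gamma\mathcal V$, the commutator terms in $\tilde G$ are harmless because $\b_1$ is smooth (so $\|\partial_\theta^{\gamma-\beta}\b_1\|_{L^\infty}\leq C$) and the factor $\mathcal D\partial_\theta^\beta\mathcal V$ with $|\beta|<|\gamma|\leq s$ is controlled by the inductive hypothesis in terms of $\|G\|_{H^{s-1}}$ and $\|(t-a)H\|_{H^{s-1}}$. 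Summing over $|\gamma|\leq s$ delivers \eqref{e.estV}.

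The only step I expect to require care is justifying that the boundary terms at $t=\infty$ genuinely vanish when integrating by parts, as well as verifying the vanishing of the analogous $\mathcal D\cdot G$ boundary flux; these points are handled by the decay estimates of Proposition~\ref{p.halfspace} (which apply at least to $\mathcal V_2$ and to the finite-energy part of $\mathcal V_1$) together with a standard truncation argument $\int_a^T\to\int_a^\infty$. Everything else is mechanical.
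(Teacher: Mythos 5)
Your proposal is correct and follows essentially the same route as the paper: an energy estimate for $s=0$ in which the $G$ term is absorbed by Cauchy--Schwarz and the $H$ term is handled by writing $H\mathcal V=(t-a)H\cdot\frac{\mathcal V}{t-a}$ and invoking the one-dimensional Hardy inequality (using $\mathcal V(\cdot,a)=0$), followed by induction on the number of tangential derivatives with the Leibniz commutator terms $\partial_\theta^{\gamma-\beta}\b_1\,\mathcal D\partial_\theta^\beta\mathcal V$ folded into a modified source $\tilde G$. Your extra care about the vanishing of boundary fluxes at $t=\infty$ is a point the paper passes over silently, but it does not change the argument.
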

\begin{proof}
The proof is by induction on the number of derivatives $s$. The result follows from simple energy estimates carried out on system \eqref{eq-bdarylayertorusdiff}.

\smallskip

\emph{Step 1.} In this first step, we prove \eqref{e.estV} for $s=0$. Testing against $\mathcal V$ and integrating by parts, we get
\begin{multline}\label{e.ipps=0}
\lambda\left\|\left(\begin{array}{c}N_1^T\nabla_\theta\\ \partial_t\end{array}\right)\mathcal V\right\|_{L^2(\mathbb T^d\times [a,\infty))}^2\\
\leq-\int_a^\infty\int_{\mathbb T^d}G\cdot \left(\begin{array}{c}N_1^T\nabla_\theta\\ \partial_t\end{array}\right)\mathcal V\,d\theta\,dt
+\int_a^\infty\int_{\mathbb T^d}H\mathcal V\,d\theta\,dt.
\end{multline}
We now estimate the right-hand side above. For the first term, we have
\begin{multline*}
\left|\int_a^\infty\int_{\mathbb T^d}G\cdot \left(\begin{array}{c}N_1^T\nabla_\theta\\ \partial_t\end{array}\right)\mathcal V\,d\theta\,dt\right|\\
\leq \frac{1}{2\lambda}\|G\|_{L^2(\mathbb T^d\times [a,\infty))}^2+\frac{\lambda}{2}\left\|\left(\begin{array}{c}N_1^T\nabla_\theta\\ \partial_t\end{array}\right)\mathcal V\right\|_{L^2(\mathbb T^d\times [a,\infty))}^2,
\end{multline*}
so that we can easily swallow the second term in the left-hand side of \eqref{e.ipps=0}. For the second term on the right-hand side of \eqref{e.ipps=0}, we use Hardy's inequality. This yields
\begin{multline*}
\left|\int_a^\infty\int_{\mathbb T^d}H\mathcal Vd\theta\,dt\right|=\left|\int_a^\infty\int_{\mathbb T^d}(t-a)H\frac{\mathcal V}{t-a}\,d\theta\,dt\right|\\\leq C\|(t-a)H\|_{L^2(\mathbb T^d\times[a,\infty))}\|\partial_t\mathcal V\|_{L^2(\mathbb T^d\times[a,\infty))}.
\end{multline*}
Young's inequality makes it now possible to reabsorb the $L^2$ norm of $\partial_t\mathcal V$ on the left-hand side of \eqref{e.ipps=0}.

\smallskip

\emph{Step 2.} We now estimate the higher-order derivatives by induction. The arguments are basically the same as for $s=1$ since the system satisfied by $\partial_\theta^\alpha\mathcal V$, for $\alpha\in\N^d$, has basically the same structure than the system \eqref{eq-bdarylayertorusdiff} for $\mathcal V$. In particular, $\partial_\theta^\alpha\mathcal V$ is zero on the boundary $\mathbb T^d\times\{a\}$. Let us do the proof only for $s=1$ as the higher-order cases are treated in the same way. Let $\alpha\in\N^d$ be such that $|\alpha|=1$. Testing the equation for $\partial_\theta^\alpha\mathcal V$ against $\partial_\theta^\alpha\mathcal V$ and integrating by parts, we get
\begin{multline}
\lambda\left\|\left(\begin{array}{c}N_1^T\nabla_\theta\\ \partial_t\end{array}\right)\partial_\theta^\alpha\mathcal V\right\|_{L^2(\mathbb T^d\times [a,\infty))}^2\\
\leq-\int_a^\infty\int_{\mathbb T^d}\partial_\theta^\alpha\b_1(\theta+tn_1)\left(\begin{array}{c}N_1^T\nabla_\theta\\ \partial_t\end{array}\right)\mathcal V\cdot\left(\begin{array}{c}N_1^T\nabla_\theta\\ \partial_t\end{array}\right)\partial_\theta^\alpha\mathcal V\,d\theta\,dt\\
-\int_a^\infty\int_{\mathbb T^d}\partial_\theta^\alpha G\cdot \left(\begin{array}{c}N_1^T\nabla_\theta\\ \partial_t\end{array}\right)\partial_\theta^\alpha\mathcal V\,d\theta\,dt
+\int_a^\infty\int_{\mathbb T^d}\partial_\theta^\alpha H\partial_\theta^\alpha\mathcal V\,d\theta\,dt.
\end{multline}
We introduce the following notations
\begin{equation*}
\tilde{G}:=\partial_\theta^\alpha\b_1(\theta+tn_1)\left(\begin{array}{c}N_1^T\nabla_\theta\\ \partial_t\end{array}\right)\mathcal V+\partial_\theta^\alpha G,\qquad \tilde{H}:=\partial_\theta^\alpha H.
\end{equation*}
The proof of our estimate now follows exactly the scheme of Step 1 above.
\end{proof}

Notice that the estimates \eqref{e.estH},~\eqref{e.estG_1},~\eqref{e.estG_2} and~\eqref{e.estG_3} yield
\begin{equation}
\label{e.boundH}
\int_a^\infty\|(t-a)H\|_{H^s(\mathbb T^d)}^2\,dt\leq \frac{C|n_1-n_2|^2}{\A^3},
\end{equation}
and
\begin{equation}
\label{e.boundG}
\int_a^\infty\|G\|_{H^s(\mathbb T^d)}^2\,dt\leq \frac{C|n_1-n_2|^2}{\A^3}\left(1+\frac{|n_1-n_2|^2}{\A^2}\right).
\end{equation}

In the next lemma, we give control of higher derivatives in $t$.

\begin{lemma}\label{lem.estVt}
Let $\mathcal V$ be the solution of~\eqref{eq-bdarylayertorusdiff} and $s\in\N$. There exist constants $\nu_0(d)<\infty$ and $C(d,L,\lambda,s,\a)<\infty$ such that for any $x_1,\, x_2\in\partial\Omega$  and $n_i := n(x_i)$, $i \in \{1,2\}$, if $n_2$ is Diophantine with positive constant $\A$, and $|n_1-n_2|<\nu_0$, then
\begin{multline}\label{e.estVt}
\|N_1^T\nabla_\theta\mathcal V\|_{H^s(\mathbb T^d\times[a,\infty))}^2+\|\partial_t\mathcal V\|_{H^s(\mathbb T^d\times[a,\infty))}^2\leq \frac{C|n_1-n_2|^2}{\A^3}\left(1+\frac{|n_1-n_2|^2}{\A^2}\right).
\end{multline}
\end{lemma}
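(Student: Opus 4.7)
The strategy is to combine Lemma~\ref{lem.estV} with elliptic regularity derived from the equation itself, in order to turn the mixed-norm control of Lemma~\ref{lem.estV} into the full $H^s(\mathbb T^d \times [a,\infty))$ bound \eqref{e.estVt}.

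The role of $\nu_0$ is to allow us to choose the orthogonal frames $M_1, M_2$ continuously, so that $|M_1 - M_2| \leq C|n_1 - n_2|$ and hence $|N_1 - N_2| \leq C|n_1 - n_2|$; this is possible only for $|n_1 - n_2|$ smaller than a dimensional constant (to avoid the antipodal ambiguity). Given this, the pointwise estimates \eqref{e.estH}--\eqref{e.estG_3} extend to all derivatives of $H$ and $G$, and integrating in $t$ yields the $H^s$ analogue of \eqref{e.boundH}--\eqref{e.boundG}:
\begin{equation*}
\int_a^\infty \|(t-a) H\|_{H^s(\mathbb T^d)}^2 + \|G\|_{H^s(\mathbb T^d)}^2 \, dt
\leq \frac{C|n_1-n_2|^2}{\A^3}\left(1 + \frac{|n_1-n_2|^2}{\A^2}\right).
\end{equation*}
Feeding this into Lemma~\ref{lem.estV} (with $s$ as large as needed) gives the same right-hand side for $\int_a^\infty \|N_1^T \nabla_\theta \mathcal V\|_{H^s}^2 + \|\partial_t \mathcal V\|_{H^s}^2 \, dt$, which already controls every mixed derivative of $\mathcal V$ involving at most one $\partial_t$.

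To pick up $\partial_t^k \mathcal V$ for $k \geq 2$, I would isolate the $\partial_t^2 \mathcal V$ term in the expansion of \eqref{eq-bdarylayertorusdiff}. Writing $D^\alpha = (N_1^T \nabla_\theta)^\alpha$ for $\alpha < d$ and $D^d = \partial_t$, the equation reads $-\sum_{\alpha,\beta} D^\alpha(\b_1^{\alpha\beta} D^\beta \mathcal V) = F$, and separating the $(d,d)$-block gives
\begin{equation*}
\b_1^{dd}(\theta + tn_1)\, \partial_t^2 \mathcal V
= -F - (\partial_t \b_1^{dd})\, \partial_t \mathcal V
- \sum_{(\alpha,\beta) \neq (d,d)} D^\alpha\!\left( \b_1^{\alpha\beta} D^\beta \mathcal V \right).
\end{equation*}
The right-hand side involves only quantities with at most one $\partial_t$ applied to $\mathcal V$, and uniform ellipticity \eqref{e.unifellip} of $\a$ transfers to $\b_1 = M_1^T \a M_1$, so the matrix $\b_1^{dd}$ is invertible with $\|(\b_1^{dd})^{-1}\| \leq \lambda^{-1}$. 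I can therefore solve algebraically for $\partial_t^2 \mathcal V$ in $L^2(\mathbb T^d \times [a,\infty))$. Differentiating the resulting identity $\alpha$ times in $\theta$ and $j$ times in $t$ and iterating, I obtain $\partial_\theta^\alpha \partial_t^k \mathcal V$ for all $|\alpha| + k \leq s + 1$, each bounded by the same right-hand side; this is \eqref{e.estVt}.

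The main technical obstacle is the bookkeeping in the induction on $k$: at each step one must check that every new term produced by differentiating the coefficients $\b_1(\theta + tn_1)$ remains bounded (the $t$-derivatives yield factors of $n_1$ multiplying smooth periodic derivatives of $\a$), and that the right-hand side involves either quantities already estimated by the preceding step or lower-order $\partial_t$-derivatives already controlled by the induction hypothesis. Since $F = D \cdot G + H$, its $H^s$-type norm is controlled through \eqref{e.estH}--\eqref{e.estG_3} by exactly the right-hand side of \eqref{e.estVt}, closing the argument.
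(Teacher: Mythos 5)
Your proposal is correct and follows essentially the same route as the paper: tangential regularity comes from Lemma~\ref{lem.estV} fed with the $H^s$ bounds~\eqref{e.boundH}--\eqref{e.boundG} on $G$ and $H$, and the higher $t$-derivatives are recovered by algebraically solving the system for $\partial_t^2\mathcal V$ (and its $t$-differentiated analogues) using the invertibility of the normal--normal coefficient block $\b_1^{d+1,d+1}$ guaranteed by~\eqref{e.unifellip}, then inducting on the number of $t$-derivatives. The only cosmetic difference is that the paper supplements this algebraic bootstrap with a lifting of the nonzero trace $\partial_t\mathcal V(\theta,a)$ and an energy estimate for $\mathcal W=\partial_t\mathcal V-\partial_t\mathcal V\,\eta(t)$, whereas you extract everything directly from the pointwise identity; both close the same induction.
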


\begin{proof}
\emph{Step 1.} The derivatives in $\theta$ are handled through Lemma \ref{lem.estV}. The proof is by induction on the number of derivatives in $t$. Let us prove
\begin{equation*}
\int_a^\infty\|N_1^T\nabla_\theta\mathcal V\|_{H^s(\mathbb T^d)}^2+\|\partial_t\mathcal V\|_{H^s(\mathbb T^d)}^2\,dt\leq  \frac{C|n_1-n_2|^2}{\A^3}\left(1+\frac{|n_1-n_2|^2}{\A^2}\right).
\end{equation*}
The issue is that $\partial_t\mathcal V$ is not $0$ on the boundary, on the contrary of tangential derivatives. We therefore have to get some control on $\partial_t\mathcal V(\theta,0)$, before lifting it. We have
\begin{multline}\label{e.estpartial2V}
\partial_t^2\mathcal V=\frac{1}{\b_1^{d+1,d+1}}\Biggl(-\partial_t\left(\b_1^{d+1,d+1}(\theta+tn_1)\right)\partial_t\mathcal V-N_1^T\nabla_\theta\cdot\b_1^{\leq d,d+1}\partial_t\mathcal V\Biggr.\\
\Biggl.-\partial_t\left(\b_1^{d+1,\leq d}N_1^T\nabla_\theta\mathcal V\right)-N_1^T\nabla_\theta\cdot\b_1^{\leq d,\leq d}N_1^T\nabla_\theta\mathcal V+\left(\begin{array}{c}N_1^T\nabla_\theta\\ \partial_t\end{array}\right)\cdot G+H\Biggr),
\end{multline}
where $\b_1^{d+1,d+1}$, $\b_1^{\leq d,d+1}$, $\b_1^{d+1,\leq d}$ and $\b_1^{\leq d,\leq d}$ are submatrices of $\b_1$. Consequently, using \eqref{e.estV} to estimate the first four terms on the right-hand side above, and \eqref{e.boundH} and \eqref{e.boundG} to estimate the source terms $G$ and $H$, we get
\begin{equation*}
\int_a^\infty\|\partial_t^2\mathcal V\|_{H^s(\mathbb T^d)}^2\, dt\leq \frac{C|n_1-n_2|^2}{\A^3}\left(1+\frac{|n_1-n_2|^2}{\A^2}\right).
\end{equation*}
Therefore, $\partial_t\mathcal V(\theta,t)\eta(t)$, with $\eta\in C^\infty_c(\R)$ equal to $1$ in the neighborhood of $0$, is a lifting of $\partial_t\mathcal V(\theta,0)\in H^{\frac12}(\mathbb T^d)$. Then, $\mathcal W:=\partial_t\mathcal V-\partial_t\mathcal V\eta(t)$ solves
\begin{equation*}
\left\{ 
\begin{aligned}
& -\left(\begin{array}{c}N_1^T\nabla_\theta\\ \partial_t\end{array}\right) \cdot \left\{\b_1(\theta+tn_1)\left(\begin{array}{c}N_1^T\nabla_\theta\\ \partial_t\end{array}\right) \mathcal W\right\} =\left(\begin{array}{c}N_1^T\nabla_\theta\\ \partial_t\end{array}\right)\cdot\tilde{G}+\tilde{H},   & t>a, \\
& \mathcal W = 0, & t=a,
\end{aligned}
\right. 
\end{equation*} 
where
\begin{equation*}
\begin{aligned}
\tilde{G} & :=\b_1(\theta+tn_1)\left(\begin{array}{c}N_1^T\nabla_\theta\\ \partial_t\end{array}\right)\left(\partial_t\mathcal V\eta(t)\right)+\partial_t\left(\b_1(\theta+tn_1)\right)\left(\begin{array}{c}N_1^T\nabla_\theta\\ \partial_t\end{array}\right) \mathcal V+\partial_tG,\\
\tilde{H}&:=\partial_tH.
\end{aligned}
\end{equation*}
Integrating by parts, we get
\begin{multline}
\lambda\left\|\left(\begin{array}{c}N_1^T\nabla_\theta\\ \partial_t\end{array}\right)\partial_t\mathcal V\right\|_{L^2(\mathbb T^d\times [a,\infty))}^2\\
\leq-\int_a^\infty\int_{\mathbb T^d}\partial_\theta^\alpha\b_1(\theta+tn_1)\left(\begin{array}{c}N_1^T\nabla_\theta\\ \partial_t\end{array}\right)\mathcal V\cdot\left(\begin{array}{c}N_1^T\nabla_\theta\\ \partial_t\end{array}\right)\partial_t\mathcal V\,d\theta\,dt\\
-\int_a^\infty\int_{\mathbb T^d}\partial_t G\cdot \left(\begin{array}{c}N_1^T\nabla_\theta\\ \partial_t\end{array}\right)\partial_t\mathcal V\,d\theta\,dt
+\int_a^\infty\int_{\mathbb T^d}\partial_t H\partial_t\mathcal V\,d\theta\,dt,
\end{multline}
which is estimated exactly as in Lemma \ref{lem.estV}. 

\smallskip

\emph{Step 2.} Estimating higher-order derivatives is done in the same way. Let $k\in\mathbb N$. Assume by induction that for all $s\in\mathbb N$ there exists a constant $C(d,L,\lambda,s,k,\a)<~\infty$ such that 
\begin{multline}\label{e.estVtrec}
\|N_1^T\nabla_\theta\mathcal V\|_{H^k\left([a,\infty);H^s(\mathbb T^d)\right)}^2+\|\partial_t\mathcal V\|_{H^k\left([a,\infty);H^s(\mathbb T^d)\right)}^2\\
\leq \frac{C|n_1-n_2|^2}{\A^3}\left(1+\frac{|n_1-n_2|^2}{\A^2}\right).
\end{multline}
Our goal is now to show \eqref{e.estVtrec} for $k$ replaced by $k+1$. Differentiating the equation \eqref{eq-bdarylayertorusdiff} $k+1$ times with respect to $t$, we get
\begin{multline}\label{e.estpartialk+3V}
\partial_t^{k+3}\mathcal V=\frac{1}{\b_1^{d+1,d+1}}\Bigg(-\partial_t\left(\b_1^{d+1,d+1}(\theta+tn_1)\right)\partial_t^{k+2}\mathcal V-N_1^T\nabla_\theta\cdot\b_1^{\leq d,d+1}\partial_t^{k+2}\mathcal V\\
-\partial_t\left(\b_1^{d+1,\leq d}N_1^T\nabla_\theta\partial_t^{k+1}\mathcal V\right)-N_1^T\nabla_\theta\cdot\b_1^{\leq d,\leq d}N_1^T\nabla_\theta\partial_t^{k+1}\mathcal V\\
+\left(\begin{array}{c}N_1^T\nabla_\theta\\ \partial_t\end{array}\right)\cdot \partial_t^{k+1}G+\partial_t^{k+1}H+\left(\begin{array}{c}N_1^T\nabla_\theta\\ \partial_t\end{array}\right)\cdot\left(\mbox{Lower-order terms}\right)\Bigg),
\end{multline}
where
\begin{equation*}
\mbox{Lower-order terms}=-\sum_{l=0}^{k}\frac{(k+1)!}{l!(k+1-l)!}\partial_t^{k+1-l}\left(\b_1(\theta+tn_1)\right)\left(\begin{array}{c}N_1^T\nabla_\theta\\ \partial_t\end{array}\right)\partial_t^l\mathcal V.
\end{equation*}
Notice that the structure of \eqref{e.estpartialk+3V} is similar to the one of \eqref{e.estpartial2V}. All the terms on the right-hand side of \eqref{e.estpartialk+3V} involve at most $k+2$ derivatives in $t$ and therefore, they can be estimated using \eqref{e.estVtrec}. The rest of the proof is completely analogous to Step 1 above.
\end{proof}

\section{Two-scale expansion of the Poisson kernel}
\label{s.twoscale}

An important ingredient in our analysis is the two-scale expansion result of Kenig, Lin and Shen~\cite[Theorem 3.8]{KLS3} for the Poisson kernel~$P^\ep_\Omega$ associated to the domain $\Omega$ and to the operator $-\nabla\cdot \a\left(\frac\cdot\ep\right)\nabla$. They proved that, for every $x_0\in\Omega$ and $x\in\partial\Omega$,
\begin{equation}\label{eq:formulaKLS}
P^\ep_\Omega(x_0,x)=\overline{P}_\Omega(x_0,x)\omega^\ep(x)+R^\ep(x_0,x),
\end{equation}
where $\overline{P}_\Omega$ is the Poisson kernel associated to the domain $\Omega$ and the homogenized operator $-\nabla\cdot\overline\a\nabla$, the function $\omega^\ep$ is a highly oscillating kernel whose definition is given below and the remainder term $R^\ep$ satisfies, for a positive constant $C(d,L,\lambda,\a,\Omega)<\infty$,
\begin{equation}
\label{e.Repbound}
\left|R^\ep(x_0,x)\right|\leq C\ep\left|x_0-x \right|^{-d} \log\left(2+\frac{\left|x_0-x \right|}\ep\right).
\end{equation}

\smallskip

By ellipticity of $\overline\a$, the matrix $\overline\a n(x)\cdot n(x)\in M_L(\R)$ is invertible; we denote its inverse by $h(x)$. The oscillating part $\omega^\ep(x)$ of the kernel is then defined by for all $1\leq i,\, j\leq L$, 
\begin{equation}\label{eq:oscillatingomega}
\omega^\ep_{ij}(x):=h_{ik}(x)n(x)\cdot\nabla\Phi^{*,\ep}_{lk}(x)\cdot n(x)\a_{lj}\!\left(\frac x\ep \right)n(x)\cdot n(x),
\end{equation}
where $\Phi^{*,\ep,lk}$ is the Dirichlet corrector associated to the adjoint matrix $\a^*$,
\begin{equation}
\left\{ 
\begin{aligned}
& -\nabla \cdot\left(\a^*\left(\frac x\ep \right) \nabla \Phi^{*,\ep}\right) = 0   & \mbox{in} & \ \Omega, \\
& \Phi^{*,\ep} = \mathbf{p}(x) & \mbox{on} & \ \partial \Omega,
\end{aligned}
\right.
\end{equation}
where $\mathbf p^{\alpha}_j(x):=x_\alpha e_j$ for each $x\in\Rd$, $1\leq\alpha\leq d$ and $1\leq j\leq L$.

\smallskip

With an eye toward Proposition~\ref{p.ergodicthm}, we notice that, after zooming in at a mesoscopic scale $\ep\leq r\ll 1$ in the vicinity of one boundary point $\bar{x}$, the non-oscillating functions $h=h(x)$ and $n=n(x)$ are almost constant, equal to $h(\bar{x})$ and $n(\bar{x})$. The oscillating matrix $\a\left(\frac\cdot\ep\right)$ is quasiperiodic along the boundary of the half-space $D_{n(\bar{x})}(c(\bar{x}))$ tangent to $\partial\Omega$ at $\bar{x}$. 
However, we do not know a priori how the normal derivative $n(x)\cdot\nabla\Phi^{*,\ep}(x)$ of the Dirichlet corrector oscillates. 

\smallskip

The goal is therefore to describe the behavior of $\Phi^{*,\ep}$ close to $\bar{x}$ in terms of intrinsic (and periodic) objects, namely cell correctors and half-space boundary layer correctors. More precisely, we will prove the following expansion for $\Phi^{*,\ep}$.

\begin{proposition}\label{prop:expphieps}
For all $\rho\in(0,1)$, there exists~$C(d,L,\lambda,\a,\Omega,\rho)<\infty$ 
such that, for every $\ep\leq r\leq\ep^{\frac12}$, we have 
\begin{multline}
\label{eq:errorestpropexpphieps}
\left\|\nabla\left(\Phi^{*,\ep}(x)-\mathbf{p}(x)-\ep\chi^*\!\left(\frac x\ep \right)-\ep V^*\!\left(\frac x\ep \right)\right)\right\|_{L^\infty(\overline{\Omega}\cap B(\bar{x},r))} \\
\leq C\left(\ep^{\frac12}+\frac{r^{2+\rho}}{\ep^{1+\rho}}\right) \wedge 1,
\end{multline}
where $\chi^*$ is the cell corrector associated to $-\nabla\cdot\a^*(y)\nabla$ and $V^*=V^*(y)$ is the boundary layer corrector solving
\begin{equation}\label{eq.blcorrstar}
\left\{ 
\begin{aligned}
& -\nabla \cdot\left( \a^*(y)\nabla V^*\right) = 0,  & \mbox{in} &\ D_{n(\bar{x})}\left(\frac{c(\bar{x})}{\ep}\right), \\
& V^* = -\chi^*(y),& \mbox{on} &\ \partial D_{n(\bar{x})}\left(\frac{c(\bar{x})}{\ep}\right).
\end{aligned}
\right. 
\end{equation} 
\end{proposition}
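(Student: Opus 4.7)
The strategy combines a two-scale expansion with the Avellaneda--Lin boundary regularity theory. I would begin by introducing the error
\begin{equation*}
w^\ep(x) := \Phi^{*,\ep}(x) - \mathbf{p}(x) - \ep\chi^*\!\left(\frac{x}{\ep}\right) - \ep V^*\!\left(\frac{x}{\ep}\right),
\end{equation*}
and observing that $w^\ep$ satisfies the homogeneous equation $-\nabla\cdot\a^*(\cdot/\ep)\nabla w^\ep = 0$ in a suitable neighborhood of $\bar x$ in $\Omega$. Indeed, the cell-corrector equation yields that $\mathbf{p}(x)+\ep\chi^*(x/\ep)$ is $\a^*(\cdot/\ep)$-harmonic on all of $\Rd$, while the half-space equation \eqref{eq.blcorrstar} gives that $\ep V^*(x/\ep)$ is $\a^*(\cdot/\ep)$-harmonic on the half-space where $V^*$ is defined. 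By uniform convexity of $\Omega$, that half-space contains $\Omega\cap B(\bar x,r_0)$ for some $r_0>0$ depending on $\Omega$.

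I would then estimate the boundary trace of $w^\ep$. On $\partial\Omega$ we have $\Phi^{*,\ep}=\mathbf{p}$, so $w^\ep = -\ep(\chi^*+V^*)(\cdot/\ep)$. The Dirichlet condition in \eqref{eq.blcorrstar} ensures that $\chi^*+V^*$ vanishes on $\partial D_{n(\bar x)}(c(\bar x)/\ep)$; in the original $x$-coordinates this is (a translate of) the tangent plane $H$ to $\partial\Omega$ at $\bar x$. By uniform convexity of $\Omega$, any $x\in\partial\Omega\cap B(\bar x,r_0)$ satisfies $\dist(x,H)\leq C|x-\bar x|^2$, and combining this with the uniform Lipschitz bound on $\chi^*+V^*$ provided by Proposition~\ref{eq.theobl} yields
\begin{equation*}
|w^\ep(x)| \leq C|x-\bar x|^2 \quad \mbox{on } \partial\Omega\cap B(\bar x,r_0).
\end{equation*}
An analogous argument, applied to tangential derivatives along $\partial\Omega$, controls the higher-order behaviour of the boundary trace.

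The final step is to convert these boundary bounds into the desired gradient estimate via the Avellaneda--Lin $C^{1,\rho}$ boundary regularity theory for $-\nabla\cdot\a^*(\cdot/\ep)\nabla$, which provides estimates that remain uniform as $\ep\to 0$. A rescaling localized to the ball $B(\bar x,r)$ turns the $r^2$-sized boundary trace into a gradient bound of order $r^{2+\rho}/\ep^{1+\rho}$, the factor $\ep^{-(1+\rho)}$ coming from the rescaling back to the original variables. The additional $\ep^{1/2}$ term captures the residual from matching $V^*$ to $\Phi^{*,\ep}$ away from the boundary layer, together with the constraint $r\leq\ep^{1/2}$ that ensures the tangent-plane deviation $|x-H|\lesssim r^2$ stays at most of order $\ep$. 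The $\wedge 1$ in the final estimate is immediate from the global $W^{1,\infty}$ bound on $\Phi^{*,\ep}$ coming from Avellaneda--Lin.

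The principal obstacle is the rigorous execution of the last step. Although the boundary data $-\ep(\chi^*+V^*)(\cdot/\ep)$ is pointwise of size $O(|x-\bar x|^2)$, it oscillates at scale $\ep$, so its Hölder norms cannot be read off from its sup norm. Extracting a gradient bound therefore requires exploiting that Avellaneda--Lin provides estimates compatible with the $\ep$-scale of oscillation, so that the gradient bound can be derived from the global $L^\infty$ bound on $w^\ep$ (rather than from a Hölder seminorm of the boundary trace), possibly after splitting $w^\ep$ into a smooth part and an oscillating correction.
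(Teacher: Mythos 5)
Your decomposition and overall strategy coincide with the paper's: the same error function, the same observation that it solves the homogeneous equation with boundary data $-\ep(\chi^*+V^*)(\cdot/\ep)$ vanishing quadratically off the tangent plane, and the same appeal to Avellaneda--Lin boundary regularity. But the proposal leaves unexecuted precisely the two quantitative steps that constitute the proof, and the first is a genuine gap. The Avellaneda--Lin boundary Lipschitz estimate used in the paper reads
\begin{equation*}
\left\|\nabla w^\ep\right\|_{L^\infty(\overline{\Omega}\cap B(\bar x,r))}
\leq C\left(\frac1r\left\|w^\ep\right\|_{L^\infty(\Omega\cap B(\bar x,2r))}
+ r^\rho\left[w^\ep\right]_{C^{1,\rho}(\partial\Omega\cap B(\bar x,2r))}\right),
\end{equation*}
and your quadratic bound $|w^\ep|\leq C|x-\bar x|^2$ on $\partial\Omega\cap B(\bar x,2r)$ does \emph{not} control $\|w^\ep\|_{L^\infty(\Omega\cap B(\bar x,2r))}$: the solution in the half-ball also feels its values on $\Omega\cap\partial B(\bar x,2r)$, where the only a priori information is the global maximum-principle bound $\|w^\ep\|_{L^\infty(\Omega)}\leq C\ep$. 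That yields $\ep/r$, which for $r$ near $\ep$ is $O(1)$ and useless. The paper's Step 3 bridges this by representing $w^\ep(x_0)$ through the Poisson kernel $P^\ep_\Omega$ integrated over \emph{all} of $\partial\Omega$, splitting the integral at scale $\ep^{\frac12}$, and using the quadratic vanishing together with the kernel bound~\eqref{e.PKB} on the near region and the $O(\ep)$ size of the data on the far region, to get $\|w^\ep\|_{L^\infty(\Omega\cap B(\bar x,2r))}\leq C(r\ep^{\frac12}+r^2)$. This is where the $\ep^{\frac12}$ in the statement actually comes from (not from ``matching $V^*$ to $\Phi^{*,\ep}$ away from the boundary layer''), and nothing in your sketch produces it.

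The obstacle you flag at the end---that the H\"older norm of the oscillating boundary trace cannot be read off its sup norm---is real, but it is resolved in the opposite way from what you suggest: the paper \emph{does} use the $C^{1,\rho}$ seminorm of the boundary trace. One differentiates the explicit formula $w^\ep|_{\partial\Omega}=-F\!\left(\frac{x'}{\ep},\frac{\varphi(x')}{\ep}\right)\tilde{\varphi}(x')\cdot x'^2$ twice, using the uniform $C^{3,\mu}$ bound on $\chi^*+V^*$ in a unit layer of the half-space (Lemma~\ref{lem.estlayer}, obtained from boundary Schauder theory rather than from Proposition~\ref{eq.theobl}, which you cite), to get $|\nabla(\cdot)|\leq C(r^2/\ep+r)$ and $|\nabla^2(\cdot)|\leq C(r^2/\ep^2+r/\ep)$; interpolation then gives $[\cdot]_{C^{1,\rho}}\leq Cr^2/\ep^{1+\rho}$, whence the $r^{2+\rho}/\ep^{1+\rho}$ term. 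Since you leave this conversion as an open ``principal obstacle'' and omit the local sup-norm estimate entirely, the proposal does not yet constitute a proof, though its skeleton is the correct one.
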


It follows immediately from the proposition that we can approximate $\omega^\ep$.

\begin{corollary}\label{cor.approxomegaeps}
For all $\rho\in(0,1)$, there exists~$C(d,L,\lambda,\a,\Omega,\rho)<\infty$ such that, for every $\ep\leq r\leq\ep^{\frac 12}$, we have
\begin{equation}\label{eq:errorestpropexpphiepscor}
\left\|\omega^\ep(x)-\tilde{\omega}^\ep(n(\bar{x}),x)\right\|_{L^\infty(\partial{\Omega}\cap B(\bar{x},r))}\leq C\left(\ep^{\frac12}+\frac{r^{2+\rho}}{\ep^{1+\rho}}\right) \wedge 1,
\end{equation}
where we denote, for $1\leq i,\, j\leq L$,
\begin{multline}\label{e.deftildeomegaeps}
\tilde{\omega}^\ep_{ij}(n(\bar{x}),x) \\:=h_{ik}(\bar{x})n(\bar{x})\cdot\nabla\left(\mathbf{p}_{lk}(x)-\ep\chi^{*}_{lk}\!\left(\frac x\ep \right)-\ep V^{*}_{lk}\!\left(\frac x\ep \right)\right)
\cdot n(\bar{x})\a_{lj}\!\left(\frac x\ep \right)n(\bar{x})\cdot n(\bar{x}).
\end{multline}
\end{corollary}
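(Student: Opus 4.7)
The plan is to obtain \eqref{eq:errorestpropexpphiepscor} directly from Proposition \ref{prop:expphieps} by a telescoping argument that freezes the macroscopic geometric data $n(x)$ and $h(x)$ at the reference point $\bar{x}$.

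First, I substitute into the definition \eqref{eq:oscillatingomega} of $\omega^\ep(x)$ the approximation of $\nabla \Phi^{*,\ep}(x)$ provided by Proposition \ref{prop:expphieps}, whose error has $L^\infty(\overline{\Omega} \cap B(\bar{x}, r))$-norm controlled by $C((\ep^{1/2} + r^{2+\rho}/\ep^{1+\rho}) \wedge 1)$. The cofactors multiplying this gradient in the expression for $\omega^\ep$ (namely $h(x)$, the two copies of $n(x)$, and $\a(x/\ep)$) are uniformly bounded using smoothness of $\partial\Omega$, ellipticity of $\bar\a$, and \eqref{e.unifellip}, while the corrected gradient $\nabla(\mathbf{p}(x) - \ep\chi^*(x/\ep) - \ep V^*(x/\ep))$ is itself uniformly bounded (the periodic corrector $\chi^*$ is smooth on $\mathbb T^d$, and the gradient of the half-space boundary layer corrector $V^*$ is bounded by the Avellaneda--Lin Lipschitz theory up to the boundary). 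Hence this substitution incurs an error of the claimed order in $L^\infty(\partial\Omega \cap B(\bar{x}, r))$.

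Next, I freeze the remaining macroscopic data at $\bar{x}$, replacing each factor $n(x)$ by $n(\bar{x})$ and $h(x)$ by $h(\bar{x})$. Since $\partial\Omega$ is smooth and $h = (\bar\a n \cdot n)^{-1}$ depends smoothly on $n$ by ellipticity of $\bar\a$, one has
\begin{equation*}
|n(x) - n(\bar{x})| + |h(x) - h(\bar{x})| \leq C r \quad \text{for every } x \in \partial\Omega \cap B(\bar{x}, r).
\end{equation*}
Since the remaining factors in $\omega^\ep$ stay uniformly bounded, each such swap costs at most $Cr$ in $L^\infty$. Because $r \leq \ep^{1/2}$ by assumption, $Cr$ is absorbed into $C\ep^{1/2}$ on the right-hand side of \eqref{eq:errorestpropexpphiepscor}.

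Combining the substitutions via the triangle inequality produces $\tilde\omega^\ep(n(\bar{x}), x)$ as defined in \eqref{e.deftildeomegaeps} with the advertised error bound. The main obstacle does \emph{not} lie in this corollary but in Proposition \ref{prop:expphieps}, which has to be established separately and which is where the genuine work occurs: one must construct the two-scale expansion of $\Phi^{*,\ep}$ near $\bar{x}$ using both the cell corrector $\chi^*$ and the half-space boundary layer corrector $V^*$ tangent to $\partial\Omega$ at $\bar{x}$, and then use boundary regularity of Avellaneda--Lin type to control the interior/boundary mismatch on scales $\ep \leq r \leq \ep^{1/2}$. The present corollary is essentially a bookkeeping consequence once that expansion is available.
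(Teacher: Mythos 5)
Your proposal is correct and matches the paper's intent: the paper treats this corollary as an immediate consequence of Proposition~\ref{prop:expphieps}, and your two-step argument (substituting the two-scale expansion of $\nabla\Phi^{*,\ep}$ with bounded cofactors, then freezing $n(x)$ and $h(x)$ at $\bar{x}$ at a cost $Cr\leq C\ep^{\frac12}$, with the boundedness of $\nabla(\chi^*+V^*)$ near the boundary supplied by Lemma~\ref{lem.estlayer}) is exactly the intended bookkeeping. The only quibbles are cosmetic: the boundedness of $\nabla V^*$ in the layer comes from classical local Schauder theory at unit scale rather than Avellaneda--Lin, and the signs in~\eqref{e.deftildeomegaeps} as printed disagree with~\eqref{eq:errorestpropexpphieps} and~\eqref{e.oscillatingomega'} (a typo in the statement, not in your argument).
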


Before going into the details of the proof of Proposition \ref{prop:expphieps}, let us comment on the boundary layer corrector $V^*$ solving \eqref{eq.blcorrstar}. The existence and uniqueness of $V^*$ is a consequence of Proposition~\ref{eq.theobl}. The boundary layer corrector is bounded in $L^\infty\left(D_{n(\bar{x})}\left(\frac{c(\bar{x})}{\ep}\right)\right)$. Below, we will need the following estimate of the derivatives of $V^*+\chi^*$ in a layer close to the boundary of the half-space.

\begin{lemma}\label{lem.estlayer}
Let $0<\mu<1$. There exists a positive constant $C(d,L,\lambda,\a,\Omega)<\infty$, such that for all $n\in\partial B_1$, for all $a\in\R$, for all solution $V^*$ to 
\begin{equation}
\left\{ 
\begin{aligned}
& -\nabla \cdot\left( \a^*(y)\nabla V^*\right) = 0,   & \mbox{in} &\ D_n(a), \\
& V^* = -\chi^*(y), & \mbox{on} &\ \partial D_n(a),
\end{aligned}
\right. 
\end{equation} 
we have the estimate
\begin{equation}\label{eq.estlayer}
\|V^*+\chi^*\|_{C^{3,\mu}(a<y\cdot n<a+1)}\leq C.
\end{equation}
\end{lemma}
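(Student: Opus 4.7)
The plan is to observe that $W := V^* + \chi^*$ solves a linear divergence-form elliptic system with smooth coefficients and smooth right-hand side, with zero Dirichlet data on the flat boundary $\partial D_n(a)$, and then to invoke standard boundary Schauder estimates on a unit slab, uniformly in $n$ and $a$.

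First I would derive the equation for $W$. Using the cell-problem identity for~$\chi^*$, namely
\[
-\nabla \cdot \bigl(\a^*(y) \nabla \chi^*\bigr) = \partial_\alpha (\a^*)^{\alpha \cdot}(y) \quad \mbox{on } \mathbb{T}^d,
\]
together with $-\nabla \cdot (\a^* \nabla V^*) = 0$ in $D_n(a)$ and the boundary condition $V^* = -\chi^*$ on $\partial D_n(a)$, I obtain
\[
-\nabla \cdot \bigl(\a^*(y) \nabla W\bigr) = \partial_\alpha (\a^*)^{\alpha\cdot}(y) \quad \mbox{in } D_n(a), \qquad W = 0 \quad \mbox{on } \partial D_n(a).
\]
This is a linear divergence-form elliptic system with pure-divergence source $\partial_\alpha g^\alpha$, where both $g^\alpha$ and the coefficients $\a^*$ are smooth and $\Zd$-periodic, hence have uniform $C^{k,\mu}$ bounds on $\Rd$. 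I also need the uniform $L^\infty$ bound $\|W\|_{L^\infty(D_n(a))} \leq 2\|\chi^*\|_{L^\infty(\mathbb{T}^d)}$, which follows from the smooth periodicity of $\chi^*$ and the comparison $\|V^*\|_{L^\infty} \leq \|\chi^*\|_{L^\infty}$ furnished by the maximum principle applied to the unique bounded solution in Proposition~\ref{eq.theobl}.

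Next I would reduce to a standard half-space by a rigid motion. Fix a point $y_0 \in \partial D_n(a)$ and an orthogonal matrix $M$ with $M e_d = n$, and set $\tilde W(z) := W(y_0 + Mz)$ and $\tilde \a^*(z) := M^T \a^*(y_0 + Mz) M$. Then $\tilde W$ solves a divergence-form system in the standard half-space $\{z_d > 0\}$ with zero Dirichlet data on $\{z_d = 0\}$, and rotated coefficients and source that still enjoy uniform $C^{k,\mu}$ bounds independent of $n$, $a$, and $y_0$, since $M$ is orthogonal and $\a^*$ is smooth and periodic.

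Finally, I would apply the standard boundary Schauder estimate for linear elliptic systems on half-balls (see, e.g., Giaquinta, or Agmon--Douglis--Nirenberg) to a covering of the slab $\{0 < z_d < 1\}$ by tangentially translated half-balls of uniform size, obtaining
\[
\|\tilde W\|_{C^{3,\mu}(\{0 < z_d < 1\})} \leq C \bigl(\|\tilde W\|_{L^\infty(\{0 < z_d < 2\})} + \|g\|_{C^{2,\mu}(\Rd)}\bigr) \leq C,
\]
with $C$ depending only on $d, L, \lambda, \mu, \a$. Undoing the rotation yields~\eqref{eq.estlayer}. The only point requiring a little care is the uniformity of the Schauder constant with respect to tangential translations and to the direction $n$, which is automatic from the $\Zd$-periodicity of $\a^*$ and the orthogonality of~$M$, both of which preserve the uniform smoothness bounds of the transformed data. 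I do not expect a substantive obstacle here: the main content of the lemma is the observation that $W = V^* + \chi^*$, rather than $V^*$ alone, is the natural object to estimate, after which the result reduces to a textbook boundary regularity estimate.
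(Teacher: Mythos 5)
Your argument is essentially identical to the paper's: the key observation that $W=V^*+\chi^*$ solves $-\nabla\cdot(\a^*\nabla W)=\nabla\cdot\a^*$ with zero Dirichlet data on the flat boundary, followed by the local boundary $C^{3,\mu}$ Schauder estimate applied uniformly along the hyperplane, is exactly the proof given there. The only caveat is your claim $\|V^*\|_{L^\infty}\leq\|\chi^*\|_{L^\infty}$ via ``the maximum principle,'' which is not available for systems ($L\geq 2$); but the needed bound $\|V^*\|_{L^\infty}\leq C$ (with a constant depending on $d,L,\lambda,\a$) is supplied by Proposition~\ref{eq.theobl} together with the Agmon-type maximum principle of Avellaneda--Lin, so this is a harmless slip rather than a gap.
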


\begin{proof}
Let $\bar{y}$ be a point on the hyperplane $\partial D_{n}(a)$, i.e. $\bar{y}\cdot n=a$. Estimate \eqref{eq.estlayer} follows from applying the local boundary Schauder theory  to $V^*+\chi^*$, which solves
\begin{equation*}
\left\{ 
\begin{aligned}
& -\nabla\cdot\left( \a^*(y)\nabla (V^*+\chi^*)\right) = \nabla\cdot\a^*, & B(\bar{y},2)\cap D_{n}(a), \\
& V^*+\chi^*=0, & B(\bar{y},2)\cap\partial D_{n}(a).
\end{aligned}
\right. 
\end{equation*}
Therefore, the local boundary $C^{3,\mu}$ estimate implies
\begin{equation*}
\|V^*+\chi^*\|_{C^{3,\mu}(B(\bar{y},2)\cap D_{n}(a))}\leq C\left(\|V^*+\chi^*\|_{L^{\infty}(B(\bar{y},4)\cap D_{n}(a))}+\|\a^*\|_{C^{2,\mu}(\mathbb T^d)}\right)\leq C,
\end{equation*}
which yields the result.
\end{proof}

\begin{proof}[Proof of Proposition \ref{prop:expphieps}]
Let $\ep\leq r\leq\ep^{\frac12}\ll1$. The proof relies on the uniform regularity theory developed by Avellaneda and Lin~\cite{AL1}. There is no essential difficulty, but some technical aspects have to be handled.

\smallskip

\emph{Step 1.} 
We first rotate and translate the domain in order to work in the situation where $\bar{x}=0$, $n(\bar{x})=e_d$. We ignore the effect of translation and assume right away that $c(\bar{x})=0$. Rotating the domain will change both the coefficient matrix and the boundary condition. 
Denoting as usual by $n(\bar{x})$ the outer unit normal at $\bar{x}$, we take $M\in M_d(\R)$ an orthogonal matrix sending the $d$-th vector of the canonical basis $e_d$ on $n(\bar{x})$. Let $N\in M_{d,d-1}(\R)$ be the matrix of the first $d-1$ columns of $M$ and let $n$ stand for $n(\bar{x})$. The matrix $\b$ is defined as above by $\b=M^T\a(\cdot) M$. 
The rotated domain $M^T\Omega$ is again denoted by $\Omega$. Our convention is to keep the trace of the rotation $M$ in the notations whenever it may change some calculations or estimates. Otherwise, we overlook the dependence in $M$.

\smallskip

There exists $0<\ep^{\frac12}\ll r_0\sim 1$, such that in $\left(B_{d-1}(0,r_0)\times(-r_0,r_0)\right)\cap\partial \Omega$ we can write $\partial\Omega$ as a graph $x_d:=\varphi(x')$, where $\varphi$ is (at least) a $C^{3}$ function. Notice that $\varphi(0)=0$ as well as $\nabla\varphi(0)=0$. By Taylor expanding $\varphi$ around $0$, we get for all $|x'|<r_0$,
\begin{equation}\label{eq:varphiparab}
\varphi(x')=\int_0^1\nabla^2\varphi(tx')\cdot x'^2(1-t)\,dt=:\tilde{\varphi}(x')\cdot x'^2,
\end{equation}
where $\tilde{\varphi}$ as well as its first-order derivative are bounded in $L^\infty(B(0,r_0))$.

\smallskip

Notice that the rotated Dirichlet corrector, $\Phi^{*,\ep,lk}(M\cdot)$ solves 
\begin{equation}
\left\{ 
\begin{aligned}
& -\nabla \cdot\left(\mathbf b^*\!\left(\frac{Mx}{\ep}\right) \nabla \Phi^{*,\ep}(Mx) \right)= 0   & \mbox{in} & \ \Omega, \\
& \Phi^{*,\ep}(Mx) = \mathbf{p}(Mx) & \mbox{on} & \ \partial\Omega.
\end{aligned}
\right.
\end{equation}

\smallskip

\emph{Step 2.} The proof of the error estimate \eqref{eq:errorestpropexpphieps} relies on the local boundary Lipschitz estimate uniform in $\ep$ proved in \cite{AL1}. Let us point out that this estimate holds even if the coefficients of the rotated system are quasiperiodic, regardless of whether a Diophantine condition is satisfied. Indeed, the uniform regularity theory holds for the original system, before the rotation, which has periodic coefficients.

\smallskip

Denote
\begin{equation*}
s^\ep(\cdot):=\Phi^{*,\ep}(M\cdot)-\mathbf{p}(M\cdot)-\ep\chi^*\!\left(\frac {M\cdot}\ep \right)-\ep V^*\!\left(\frac {M\cdot}\ep \right).
\end{equation*}
The error $s^\ep$ is a weak solution of
\begin{equation*}
\left\{ 
\begin{aligned}
& -\nabla \cdot\left( \mathbf b^*\left(\frac{Mx}{\ep}\right) \nabla s^\ep \right) = 0   & \mbox{in} & \ \Omega\cap B(0,2r), \\
& s^\ep = -\ep\chi^*\!\left(\frac {Mx}\ep \right)-\ep V^*\!\left(\frac {Mx}\ep \right) & \mbox{on} & \ \partial \Omega\cap B(0,2r).
\end{aligned}
\right.
\end{equation*}
The boundary Lipschitz estimate \cite[Lemma 20]{AL1} gives, for $0<\rho<1$,
\begin{multline}\label{eq:lipest}
\left\|\nabla s^\ep \right\|_{L^\infty(\overline{\Omega}\cap B(0,r))}\leq C\biggl\{\frac{1}{r}\left\| s^\ep \right\|_{L^\infty(\Omega\cap B(0,2r))}\biggr.\\
\biggl.+r^\rho\left[\ep\chi^*\!\left(\frac {Mx}\ep \right)+\ep V^*\!\left(\frac {Mx}\ep \right)\right]_{C^{1,\rho}(\partial\Omega\cap B(0,2r))}\biggr\},
\end{multline}
with $C$ depending only on $d$, $\lambda$, the H\"older semi-norm of $\a$, $\rho$ and $\Omega$ and, in particular, independent of $\ep$. We now estimate each term on the right-hand side of \eqref{eq:lipest}. For both terms we crucially need to use cancellation properties in the boundary condition.

\smallskip

\emph{Step 3.} We now concentrate on the first term on the right-hand side of \eqref{eq:lipest}. The claim is that
\begin{equation}\label{eq:boundstep3}
\left\|s^\ep\right\|_{L^\infty(\Omega\cap B(0,2r))}\leq C\big(r\ep^{\frac12}+r^2\big).
\end{equation}
We first use the Agmon-type maximum principle of \cite[Theorem 3(ii)]{AL1} to get
\begin{equation}\label{e.agmonpple}
\left\|s^\ep\right\|_{L^\infty(\Omega\cap B(0,2r))}\leq \left\|s^\ep\right\|_{L^\infty(\Omega)}\leq C\ep.
\end{equation}
Notice that this $L^\infty$ bound is nothing but a consequence of the integral representation formula for $s^\ep$ and the Poisson kernel bound~\eqref{e.PKB}: for every $x_0\in\Omega$,
\begin{align*}
\left|s^\ep(x_0)\right|
& =\ep\left|\int_{\partial\Omega} P_\Omega(x_0,x)\left(\chi^*\!\left(\frac {Mx}\ep \right)+\ep V^*\!\left(\frac {Mx}\ep \right)\right)\, d\H^{d-1}(x)\right|\\
& \leq C\ep\int_{\partial\Omega}\frac{\dist(x_0,\partial\Omega)}{|x_0-x|^d}d\H^{d-1}(x) \\
& \leq C\ep.
\end{align*}
However estimate \eqref{e.agmonpple} only yields  $O(\frac\ep r)$ on the right-hand side of \eqref{eq:errorestpropexpphieps}, which is not nearly good enough. To get a better estimate, we must use the cancellation properties of the boundary condition close to the origin. Using
\begin{equation}\label{eq:cancel}
\ep\chi^*\!\left(\frac {Nx'}\ep \right)+\ep V^*\!\left(\frac {Nx'}\ep \right)=0
\end{equation}
and the expansion of $\varphi$ near the origin \eqref{eq:varphiparab}, we have 
\begin{equation}
\begin{aligned}
\lefteqn{
-\ep\chi^*\!\left(\frac {Mx}\ep \right)-\ep V^*\!\left(\frac {Mx}\ep \right)
} \quad & \\
& =-\ep\chi^*\!\left(\frac {Nx'+x_dn}\ep \right)-\ep V^*\!\left(\frac {Nx'+x_dn}\ep \right)\\
&=-\ep\chi^*\!\left(\frac {Nx'}\ep \right)-\ep V^*\!\left(\frac {Nx'}\ep \right)-\int_0^1\left[\nabla\chi^*+\nabla V^*\right]\!\left(\frac {Nx'+tx_dn}\ep \right)\cdot nx_d\,dt\\
&=-\int_0^1\left[\nabla\chi^*+\nabla V^*\right]\!\left(\frac{Nx'+t\varphi(x')n}{\ep}\right)\cdot n\,dt\tilde{\varphi}(x')\cdot x'^2.
\end{aligned}
\end{equation}
Denote
\begin{equation*}
F(y',y_d):=\int_0^1\left[\nabla\chi^*+\nabla V^*\right]\!\left(Ny'+ty_dn\right)\cdot n\,dt.
\end{equation*}
By Lemma \ref{lem.estlayer}, all derivatives of $F$ (at least) up to order three are uniformly bounded. Moreover, for any $x=(x',\varphi(x'))$, $|x'|<r_0$,
\begin{equation}\label{eq:parabzero}
\ep\chi^*\!\left(\frac {Mx}\ep \right)+\ep V^*\!\left(\frac {Mx}\ep \right)=F\!\left(\frac{x'}{\ep},\frac{\varphi(x')}{\ep}\right)\tilde{\varphi}(x')\cdot x'^2.
\end{equation}
Therefore, close to the origin, the boundary condition is squeezed between two paraboloids. We will rely on this property, when estimating
\begin{equation*}
s^\ep(x_0)=-\int_{\partial\Omega} P^\ep_\Omega(x_0,x)\left(\ep\chi^*\!\left(\frac {Mx}\ep \right)+\ep V^*\!\left(\frac {Mx}\ep \right)\right)\,d\H^{d-1}(x),
\end{equation*}
for $x_0\in B(0,2r)\cap\Omega$. We split the boundary integral into two parts
\begin{multline}\label{eq:splitint}
\int_{\partial\Omega} P^\ep_\Omega(x_0,x)\left(\ep\chi^*\!\left(\frac {Mx}\ep \right)+\ep V^*\!\left(\frac {Mx}\ep \right)\right)\,d\H^{d-1}(x)\\
=\int_{B(0,4\ep^{\frac12})\cap\partial\Omega} P^\ep_\Omega(x_0,x)\left(\ep\chi^*\!\left(\frac {Mx}\ep \right)+\ep V^*\!\left(\frac {Mx}\ep \right)\right)\,d\H^{d-1}(x)\\+\int_{B(0,4\ep^{\frac12})^c\cap\partial\Omega} P^\ep_\Omega(x_0,x)\left(\ep\chi^*\!\left(\frac {Mx}\ep \right)+\ep V^*\!\left(\frac {Mx}\ep \right)\right)\,d\H^{d-1}(x).
\end{multline}
For the first term on the right-hand side of~\eqref{eq:splitint}, on $B\big(0,4\ep^{\frac12}\big)\cap\partial\Omega$ we use~\eqref{eq:parabzero}, which gives
\begin{equation*}
\begin{aligned}
\lefteqn{
\left|\int_{B(0,4\ep^{\frac12})\cap\partial\Omega} P^\ep_\Omega(x_0,x)\left(\ep\chi^*\!\left(\frac {Mx}\ep \right)+\ep V^*\!\left(\frac {Mx}\ep \right)\right)\,d\H^{d-1}(x)\right|
} \qquad & \\
&\leq C\int_{|x'|\leq 4\ep^{\frac12}}\left|P^\ep_\Omega(x_0,(x',\varphi(x')))\left[\ep\chi^*+\ep V^*\right]\!\left(\frac{Nx'+\varphi(x')n}{\ep}\right)\right|\,dx'\\
&\leq C\int_{|x'|\leq 4\ep^{\frac12}}\frac{{x_0}_d-\varphi({x_0}')}{\big(({x_0}_d-\varphi(x'))^2+|x_0'-x'|^2\big)^{\frac d2}}\left|F\!\left(\frac{x'}{\ep},\frac{\varphi(x')}{\ep}\right)\tilde{\varphi}(x')\cdot x'^2\right|\,dx'\\
&\leq C\int_{|x'|\leq 4\ep^{\frac12}}\frac{({x_0}_d-\varphi({x_0}'))|x'|^2}{\big(({x_0}_d-\varphi(x'))^2+|{x_0}'-x'|^2\big)^{\frac d2}}\,dx'.
\end{aligned}
\end{equation*}
Bounding $|x'|$ by the triangle inequality,
\begin{equation*}
|x'|^2\leq 2|x_0'|^2+2|x_0'-x'|^2,
\end{equation*}
and using the following bounds when appropriate, for all $x\in B(0,2r)\cap\Omega$,
\begin{equation*}
{x_0}_d-\varphi({x_0}')\leq r,\qquad |{x_0}|^2\leq r^2,
\end{equation*}
it now follows from the previous series of inequalities that
\begin{equation*}
\begin{aligned}
\lefteqn{
\left|\int_{B(0,4\ep^{\frac12})\cap\partial\Omega} P^\ep_\Omega({x_0},x)\left(\ep\chi^*\!\left(\frac {Mx}\ep \right)+\ep V^*\!\left(\frac {Mx}\ep \right)\right)\,d\H^{d-1}(x)\right|
} \quad & \\
&\leq Cr\int_{|x'|\leq 4\ep^{\frac12}}\frac{1}{|{x_0}'-x'|^{d-2}}dx'+Cr^2\int_{|x'|\leq 4\ep^{\frac12}}\frac{{x_0}_d-\varphi({x_0}')}{\big(({x_0}_d-\varphi(x'))^2+|{x_0}'-x'|^2\big)^{\frac d2}}\,dx'.
\end{aligned}
\end{equation*}
A direct computation gives
\begin{equation*}
r\int_{|x'|\leq 4\ep^{\frac12}}\frac{1}{|{x_0}'-x'|^{d-2}}\,dx'\leq Cr\ep^{\frac12},
\end{equation*}
and, on the other hand, using that for all ${x_0}\in B(0,2r)\cap\Omega$, for all $x\in B(0,4\ep^{\frac12})\cap\Omega$,
\begin{equation*}
|\varphi({x_0}')-\varphi(x')|\leq \|\nabla\varphi\|_{L^\infty(B(0,4\ep^{\frac 12}))}|{x_0}'-x'|\leq C\ep^{\frac12}|{x_0}'-x'|,
\end{equation*}
we obtain
\begin{equation*}
\begin{aligned}
\lefteqn{
r^2\int_{|x'|\leq 4\ep^{\frac12}}\frac{{x_0}_d-\varphi({x_0}')}{\big(({x_0}_d-\varphi(x'))^2+|{x_0}'-x'|^2\big)^{\frac d2}}\,dx'
} \qquad & \\
&=r^2\int_{|x'|\leq 4\ep^{\frac12}}\frac{{x_0}_d-\varphi({x_0}')}{\big(({x_0}_d-\varphi({x_0}')+\varphi({x_0}')-\varphi(x'))^2+|{x_0}'-x'|^2\big)^{\frac d2}}\,dx'\\
&\leq Cr^2\int_{|x'|\leq 4\ep^{\frac12}}\frac{{x_0}_d-\varphi({x_0}')}{\left(({x_0}_d-\varphi({x_0}'))^2+\big(1-\ep^{\frac 12}\big)^2|{x_0}'-x'|^2\right)^{\frac d2}}\,dx' \\
& \leq Cr^2.
\end{aligned}
\end{equation*}

\smallskip

To estimate the second term on the right-hand side of \eqref{eq:splitint}, we use~\eqref{e.PKB} and that the boundary condition is uniformly bounded by $\ep$:
\begin{equation*}
\begin{aligned}
\lefteqn{
\left|\int_{B(0,4\ep^{\frac12})^c\cap\partial\Omega} P^\ep_\Omega(x_0,x)\left(\ep\chi^*\!\left(\frac {Mx}\ep \right)+\ep V^*\!\left(\frac {Mx}\ep \right)\right)\,d\H^{d-1}(x)\right|
} \quad & \\
&\leq C\ep r\int_{B(0,4\ep^{\frac12})^c\cap\partial\Omega}\frac{1}{(|x|-r)^d}\,d\H^{d-1}(x)\\
&\leq C\ep r\int_{\{B_{d-1}(0,r_0)\times (-r_0,r_0)\}^c\cap\partial\Omega}\frac{1}{|x|^d}\,d\H^{d-1}(x)
+C\ep r\int_{\partial\Omega,\, \ep^{\frac12}<|x|<r_0}\frac{1}{|x|^d}\,d\H^{d-1}(x)\\
&\leq C\big(\ep r+\ep^{\frac12}r\big).
\end{aligned}
\end{equation*}
The above inequalities yield~\eqref{eq:boundstep3}.

\smallskip

\emph{Step 4.} The second term on the right-hand side of \eqref{eq:lipest} is straightforwardly estimated. In order to estimate the $C^{1,\rho}$ semi-norm of the boundary data, we estimate its second-order derivative using \eqref{eq:parabzero}. We have
\begin{equation*}
\nabla\left(F\!\left(\frac{x'}{\ep},\frac{\varphi(x')}{\ep}\right)\right)=\frac{1}{\ep}\nabla'F\!\left(\frac{x'}{\ep},\frac{\varphi(x')}{\ep}\right)+\frac{1}{\ep}\partial_dF\!\left(\frac{x'}{\ep},\frac{\varphi(x')}{\ep}\right)\nabla'\varphi(x'),
\end{equation*}
and
\begin{multline*}
\nabla^2\left(F\!\left(\frac{x'}{\ep},\frac{\varphi(x')}{\ep}\right)\right)=\frac{1}{\ep^2}\nabla'^2F\!\left(\frac{x'}{\ep},\frac{\varphi(x')}{\ep}\right)\\
+\frac{2}{\ep^2}\partial_d\nabla'F\!\left(\frac{x'}{\ep},\frac{\varphi(x')}{\ep}\right)\nabla'\varphi(x')+\frac{1}{\ep^2}\partial_d^2F\!\left(\frac{x'}{\ep},\frac{\varphi(x')}{\ep}\right)\nabla'\varphi(x')\nabla'\varphi(x').
\end{multline*}
Subsequently,
\begin{equation*}
\left|\nabla\left(F\!\left(\frac{x'}{\ep},\frac{\varphi(x')}{\ep}\right)\tilde{\varphi}(x')\cdot x'^2\right)\right|\leq C\left(\frac{r^2}{\ep}+r\right)
\end{equation*}
and
\begin{equation*}
\left|\nabla^2\left(F\!\left(\frac{x'}{\ep},\frac{\varphi(x')}{\ep}\right)\tilde{\varphi}(x')\cdot x'^2\right)\right|
\leq C\left(\frac{r^2}{\ep^2}+\frac{r}{\ep}\right).
\end{equation*}
In the end, we get
\begin{equation}\label{eq:boundstep4}
\left[\ep\chi^*\!\left(\frac{Mx}\ep\right)+\ep V^*\!\left(\frac{Mx}\ep\right)\right]_{C^{1,\rho}(\partial\Omega\cap B(0,2r))}\leq \frac{Cr^2}{\ep^{1+\rho}}.
\end{equation}
Combining~\eqref{eq:boundstep3} and~\eqref{eq:boundstep4} yields the proposition.
\end{proof}

\section{Outline of the rest of the proof}
\label{sec.outline}

In this section, we combine the results of the previous sections to obtain a representation for the error in homogenization at a fixed point $x_0\in\Omega$. 

\smallskip

We first summarize the strategy, which begins with the Poisson formula. We take a triadic cube decomposition of the boundary, so that every cube is ``good" in the sense that there is a nearby boundary point whose normal vector is Diophantine (with a good constant). We then approximate the integral on~$\partial \Omega$ in the Poisson formula by a sum of integrals over the tangent planes to the good boundary points. We make an error in moving from~$\partial \Omega$ to these flat tangent planes which depends on the size of the local cube. We then replace the Poisson kernel for the heterogeneous equation with the two-scale expansion, and use Proposition~\ref{prop:expphieps} to approximate $\omega^\ep_{ij}$ with a smooth, periodic function on $\Rd$. The result is then a sum of integrals on $\R^{d-1}$ of quasiperiodic functions (the product of the boundary condition and the periodic approximation of $\omega^{\ep}_{ij}$ restricted to the plane) multiplied by a smooth function (which is the homogenized Poisson kernel times a cutoff  function on the local cube). This latter integral is close to the average of the periodic function, by the quantitative ergodic theorem. Since the cubes in the partition are small, the sum of the integrals over all the cubes is therefore close to the Poisson formula for the homogenized equation. 

\smallskip

Proceeding with the argument, we fix the length scale $\ep>0$ and take $\delta >0$ to be a small exponent fixed throughout (representing the amount of exponent that we give up in the argument). We first take~$\P_\ep$ to be the collection of triadic cubes given by Proposition~\ref{p.CZdecomposition} applied to the function
\begin{equation*}
F(x):= \ep^{1-\delta} \A^{-1}(x),\quad x \in \partial \Omega,
\end{equation*}
where~$\A(x)$ is the Diophantine constant for the unit vector~$n(x)$, which is normal to the boundary of~$\partial \Omega$ at~$x\in \partial \Omega$. We let $\left\{ \psi_\cu \in C^\infty(\Rd)\,:\, \cu\in\P\right\}$ be the partition of unity given by Corollary~\ref{c.partitionofunity} for the partition $\P_\ep$. For each $x\in\partial\Omega$, we denote by $\cu_{\P_\ep}(x)$ the unique element of $\P_\ep$ containing $x$. By Proposition~\ref{p.CZdecomposition}, for each cube $\cu \in \P_\ep$, there exists $\bar x(\cu) \in 3\cu \cap \partial \Omega$ such that 
 \begin{equation} \label{e.bar x(cu)}
\A(\bar x(\cu)) \geq \frac{\ep^{1-\delta}}{\size(\cu)}\,.
\end{equation}
We also denote
\begin{equation*} \label{}
\Gamma_\ep:= \Omega \cap \left( \bigcup_{\cu\in \P_\ep} 5\cu \right).
\end{equation*}
Proposition~\ref{p.CZdecomposition}(iv) gives us that 
\begin{equation}
\label{e.CZinput}
\# \!\left\{ \cu \in \P_\ep \,:\, \size(\cu) \geq 3^n \right\}
\leq C 3^{-2n(d-1)} \ep^{(1-\delta)(d-1)}. 
\end{equation}
Notice that~\eqref{e.CZinput} implies in particular that the largest cube in $P_\ep$ has size at most $C\ep^{\frac{1-\delta}2}$. Since $F$ is bounded below by $\ep^{1-\delta}$, we have 
\begin{equation}
\label{e.unifboundarylayer}
\forall \cu\in\P_\ep, \quad c\ep^{1-\delta} \leq \size(\cu) \leq C\ep^{\frac{1-\delta}2}. 
\end{equation}

\smallskip

For each $\cu\in\P_\ep$, let us denote by $D(\cu):=D_{n(\bar{x}(\cu))}\left(c(\bar{x}(\cu))\right)$, which is  the half-space tangent to $\Omega$ at $\bar{x}(\cu)$.  Here $c(\bar{x}(\cu))  = \bar{x}(\cu) \cdot n(\bar{x}(\cu))$. The half-space corrector is denoted by $V^*(\cu,\cdot)$, that is, $V^*(\cu,\cdot)$ is the solution of the boundary layer problem
\begin{equation*}
\left\{ 
\begin{aligned}
& -\nabla \cdot\left( \a^*(y)\nabla V^*(\cu,\cdot)\right) = 0   & \mbox{in} & \ D_{n(\bar{x}(\cu))}\left(\frac{c(\bar{x}(\cu))}\ep \right), \\
& V^*(\cu,\cdot) = -\chi^*(y) & \mbox{on} & \ \partial D_{n(\bar{x}(\cu))}\left(\frac{c(\bar{x}(\cu))}\ep \right).
\end{aligned}
\right. 
\end{equation*} 
Moreover, $\omega^\ep(\cu,\cdot)=\omega^\ep(\bar{x}(\cu),\cdot)$ is defined by \eqref{e.deftildeomegaeps}, that is, for $1\leq i,\, j\leq L$,
\begin{multline}
\label{e.oscillatingomega'}
\tilde\omega^\ep_{ij}(\cu,x)=h_{ik}(\bar{x}(\cu)) n(\bar{x}(\cu)) \cdot \nabla \left( \mathbf{p}_{lk}(x)+ \ep\chi^{*}_{lk}\!\left(\frac{x}\ep\right) + \ep V^{*}_{lk}\!\left(\cu,\frac{x}\ep\right) \right) \\
\cdot n(\bar{x}(\cu))\a_{lj}\!\left(\frac {x}\ep \right)n(\bar{x}(\cu))\cdot n(\bar{x}(\cu)).
\end{multline}
In order to make the notation lighter, when it is clear from the context, we will drop the dependence of $\bar{x}(\cu)$ on $\cu$ and write simply $\bar{x}$. 

\smallskip

The goal of this section is to prove the following proposition.

\begin{proposition} 
\label{p.goalest}
There exists a positive constant $C(\delta, d,L,\lambda,\a,g,\Omega)<\infty$ such that, for every $x_0\in\Omega \setminus \Gamma_\ep$,
\begin{multline} \label{e.goalest}
\big| u^\ep(x_0) - \bar{u}(x_0) \big| \\
\leq C\ep^{\frac12}+ C\ep^{-10\delta} \sum_{\cu \in \P_\ep} \left( \frac{\size^3(\cu)}{\ep^2} \wedge 1 \right) \frac{\dist(x_0,\partial \Omega)}{|x_0-\bar{x}(\cu)|^d} \size(\cu)^{d-1}. 
\end{multline}
\end{proposition}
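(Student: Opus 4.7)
The plan is to begin from the Poisson representation
\begin{equation*}
u^\ep(x_0) = \int_{\partial\Omega}P^\ep_\Omega(x_0,x)\,g(x,x/\ep)\,d\H^{d-1}(x),
\end{equation*}
and use the partition of unity $\{\psi_\cu\}_{\cu\in\P_\ep}$ from Corollary~\ref{c.partitionofunity} to decompose $u^\ep(x_0) = \sum_{\cu\in\P_\ep} I^\ep_\cu(x_0)$. The goal is to approximate each $I^\ep_\cu(x_0)$ by its homogenized analogue $\bar I_\cu(x_0):=\int_{\partial\Omega}\psi_\cu\overline P_\Omega(x_0,\cdot)\bar g\,d\H^{d-1}$, with $\bar g(x):=\langle\tilde\omega(n(x),\cdot)g(x,\cdot)\rangle$. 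Since $x_0\notin\Gamma_\ep$, every point $x\in\supp\psi_\cu\cap\partial\Omega$ satisfies $|x_0-x|\gtrsim|x_0-\bar x(\cu)|$ by property~(v) of Proposition~\ref{p.CZdecomposition}, so the natural weight controlling all per-cube errors is $w_\cu:=\dist(x_0,\partial\Omega)|x_0-\bar x(\cu)|^{-d}\size(\cu)^{d-1}$, a bound on $\int\psi_\cu|\overline P_\Omega|\,d\H^{d-1}$ coming from~\eqref{e.PKB} for $\overline P_\Omega$.

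On each cube the comparison proceeds in four steps. (i) Replace $P^\ep_\Omega$ by $\overline P_\Omega\omega^\ep$ using the Kenig--Lin--Shen expansion~\eqref{eq:formulaKLS}; summing the remainder bound~\eqref{e.Repbound} over $\partial\Omega$ yields a global error of order $\ep|\log\ep|$, absorbed into the $C\ep^{1/2}$ term. (ii) Replace $\omega^\ep$ by the periodic proxy $\tilde\omega^\ep(\cu,\cdot)$ via Corollary~\ref{cor.approxomegaeps}; choosing $\rho$ close to $1$ and invoking the size bound~\eqref{e.unifboundarylayer}, the local error is at most $\ep^{-O(\delta)}\bigl((\size^3(\cu)/\ep^2)\wedge 1\bigr)w_\cu$. (iii) Freeze the slow arguments of $g$, $n$ and $h$ at $\bar x(\cu)$ and flatten $\partial\Omega\cap\tfrac43\cu$ onto the tangent hyperplane through $\bar x(\cu)$ using the parametrization $x_d=\varphi(x')$ of~\eqref{eq:varphiparab}; the slow-variation errors contribute $O(\size(\cu))w_\cu$ while the curvature perturbation of $g(x',\varphi(x')/\ep)$ relative to $g(x',0)$ contributes $O(\size(\cu)^2/\ep)w_\cu$, both dominated by $(\size^3(\cu)/\ep^2)w_\cu$ because $\size(\cu)\geq c\ep^{1-\delta}$. (iv) The resulting integral takes the form $\int_{\R^{d-1}}\Psi(z')K(N(\bar x)z'/\ep)\,dz'$, where $K:=\tilde\omega(n(\bar x),\cdot)g(\bar x,\cdot)$ is smooth and $\Zd$-periodic by Lemma~\ref{lem.estlayer} and~\eqref{e.smooth}, and $\Psi$ inherits from $\overline P_\Omega(x_0,\cdot)\psi_\cu$ the bound $\|\nabla^k\Psi\|_{L^1}\lesssim\size(\cu)^{-k}w_\cu$. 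The Diophantine lower bound~\eqref{e.bar x(cu)} gives $\A(\bar x(\cu))\geq\ep^{1-\delta}/\size(\cu)$, so Proposition~\ref{p.ergodicthm} with $\eta=\ep$ and $f(|\xi|)=|\xi|^{-\kappa}$ produces the factor $(\A^{-1}\eta)^k\leq(\size(\cu)\ep^\delta)^k$; choosing $k\sim 1/\delta$ renders this error negligible. Each $I^\ep_\cu(x_0)$ is thereby approximated by $\bar g(\bar x(\cu))\int\psi_\cu\overline P_\Omega\,d\H^{d-1}$.

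It remains to exchange $\bar g(\bar x(\cu))$ for $\bar g(x)$ inside each $\bar I_\cu$, recovering the homogenized Poisson formula $\bar u(x_0)=\int\overline P_\Omega(x_0,\cdot)\bar g\,d\H^{d-1}$. This is the main obstacle, because $\bar g$ is not a priori continuous and must be controlled through the half-space estimates of Section~\ref{s.halfspace}. Applying Lemmas~\ref{lem.estV} and~\ref{lem.estVt} with $n_1=n(x)$, $n_2=n(\bar x(\cu))$, $\A=\A(\bar x(\cu))$ yields $|\bar g(x)-\bar g(\bar x(\cu))|\lesssim|n(x)-n(\bar x(\cu))|/\A^{3/2}\lesssim\size(\cu)^{5/2}\ep^{-3(1-\delta)/2}$ for $x\in\supp\psi_\cu\cap\partial\Omega$, which is absorbed into $\ep^{-O(\delta)}\bigl((\size^3(\cu)/\ep^2)\wedge 1\bigr)w_\cu$ since $\size(\cu)\geq c\ep^{1-\delta}$. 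Summing the per-cube estimates over $\P_\ep$ and adding the global $C\ep^{1/2}$ error from Step~(i) produces~\eqref{e.goalest}. The exponent $\ep^{-10\delta}$ is a deliberate overestimate bundling the individual $\delta$-losses accumulated in Steps~(ii)--(iv) and in the continuity estimate for $\bar g$.
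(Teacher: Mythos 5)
Your proposal reproduces the paper's proof essentially step for step: the five terms $\mathrm{T}_1,\dots,\mathrm{T}_5$ of the paper's decomposition correspond exactly to your steps (i)--(iv) together with the final exchange of $\bar g(\bar x(\cu))$ for $\bar g(x)$ (which the paper packages as Proposition~\ref{p.contbarg}, itself a consequence of Lemmas~\ref{lem.estV}--\ref{lem.estVt} as you indicate), and the error bookkeeping --- absorbing the flattening and slow-variable errors into $\bigl(\size^3(\cu)/\ep^2\wedge 1\bigr)w_\cu$ via $c\ep^{1-\delta}\leq\size(\cu)\leq C\ep^{(1-\delta)/2}$ --- is identical. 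The one imprecision is your claim that $\int_{\partial\Omega}R^\ep(x_0,x)\,d\H^{d-1}(x)=O(\ep|\log\ep|)$ uniformly: by~\eqref{e.Repbound} this integral is really of order $\ep\,\dist(x_0,\partial\Omega)^{-1}\log\bigl(2+\ep^{-1}\dist(x_0,\partial\Omega)\bigr)$, but the paper's own disposal of this term is equally terse and the discrepancy is harmless for the $L^q$ estimates of Section~\ref{s.estimates}.
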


The next section is devoted to estimating the $L^q$ norms of the error term on the right side of \eqref{e.goalest}. For now, we turn to the proof of \eqref{e.goalest}. Let us split the estimate of the left-hand side of \eqref{e.goalest} into several pieces, which will be handled separately: for any $x_0\in\Omega \setminus \Gamma_\ep$,
\begin{multline}\label{e.termsrhs}
\big| u^\ep(x_0) - \bar{u}(x_0) \big|=\left| \int_{\partial \Omega} P^\ep_\Omega(x_0,x) g\left(x,\frac {x}\ep \right) \,d\H^{d-1}(x)\right.\\
\left.-\int_{\partial \Omega} \overline{P}_\Omega(x_0,x) \bar{g}(x) \,d\H^{d-1}(x)\right|
\leq\, \mathrm{T}_1\,+\,\mathrm{T}_2\,+\mathrm{T}_3\,+\mathrm{T}_4\,+\mathrm{T}_5,
\end{multline}
where
\begin{multline*}
\mathrm{T}_1:=\left| \int_{\partial \Omega}P^\ep_\Omega(x_0,x)g\left(x,\frac {x}\ep \right) \,d\H^{d-1}(x)\right.\\
\left.- \sum_{\cu \in\P_\ep} \int_{\partial \Omega} \psi_\cu(x) \overline{P}_\Omega (x_0,x) \tilde\omega^\ep(\cu,x)  g\left(x,\frac {x}\ep \right) \,d\H^{d-1}(x)\right|,
\end{multline*}
\begin{multline*}
\mathrm{T}_2:=\sum_{\cu\in\P_\ep} \bigg|  \int_{\partial \Omega} \psi_\cu(x) \overline{P}_\Omega (x_0,x) \tilde\omega^\ep(\cu,x)  g\left(x,\frac {x}\ep \right) \,d\H^{d-1}(x)\\
-  \int_{\partial D(\cu)} \psi_\cu( \mathrm{proj}(x)) \overline{P}_\Omega (x_0,\mathrm{proj}(x)) \tilde\omega^\ep(\cu, x)  g\left(\bar{x},\frac { x}\ep \right) \,d\H^{d-1}(x)\bigg|,
\end{multline*}
\begin{multline*}
\mathrm{T}_3:=\sum_{\cu\in\P_\ep}\bigg|  \int_{\partial D(\cu)} \psi_\cu(\mathrm{proj}(x)) \overline{P}_\Omega (x_0,\mathrm{proj}(x)) \tilde\omega^\ep(\cu,x)  g\left(\bar{x},\frac {x}\ep \right) \,d\H^{d-1}(x) \\
- \int_{\partial D(\cu)}  \psi_\cu(\mathrm{proj}(x)) \overline{P}_\Omega (x_0,\mathrm{proj}(x))\overline g(\bar{x}) \,d\H^{d-1}(x) \bigg|,
\end{multline*}
\begin{multline*}
\mathrm{T}_4:=\sum_{\cu\in\P_\ep}\bigg| \int_{\partial D(\cu)}  \psi_\cu(\mathrm{proj}(x)) \overline{P}_\Omega (x_0,\mathrm{proj}(x))\overline g(\bar{x}) \,d\H^{d-1}(x) \\ 
- \int_{\partial D(\cu)}  \psi_\cu( \mathrm{proj}(x) ) \overline{P}_\Omega (x_0,\mathrm{proj}(x))\overline g(\mathrm{proj}(x)) \,d\H^{d-1}(x)  \bigg|,
\end{multline*}
and
\begin{multline*}
\mathrm{T}_5:=\bigg|\sum_{\cu\in\P_\ep}\int_{\partial D(\cu)}  \psi_\cu( \mathrm{proj}(x)) \overline{P}_\Omega (x_0,\mathrm{proj}(x))\overline g(\mathrm{proj}(x)) \,d\H^{d-1}(x)\\
-\int_{\partial\Omega}\overline{P}_\Omega (x_0,x)\overline g(x) \,d\H^{d-1}(x)\bigg|.
\end{multline*}
Here we define $\mathrm{proj}(x) = \mathrm{proj}(x,\cu_{\P_\ep}(x))$ to be nearest point of $\partial \Omega$ to $x$ such that $x - \mathrm{proj}(x)$ is a multiple of the vector~$n(\bar{x}(\cu))$. Since $\partial \Omega$ is smooth we have that 
\begin{equation*}
\left| \mathrm{proj}(x) - x \right| \leq C \size^2\left(\cu_{\P_\ep}(x) \right), \quad \left| \nabla \mathrm{proj}(x) - \Idd_d \right| \leq C \size\left(\cu_{\P_\ep}(x) \right),
\end{equation*}
and, for all $k \in \N$,
\begin{equation*} 
\left| \nabla^k \mathrm{proj}(x)\right| \leq C(k,\partial \Omega,d).
\end{equation*}
Thus, for small enough $\ep\in (0,1)$,  $\mathrm{proj}^{-1}(\cdot)$ is a diffeomorphism between $\partial \Omega \cap 5\cu$ and $\mathrm{proj}^{-1}(\partial \Omega \cap 5\cu) \cap \partial D(\cu)$.
The Jacobian determinants satisfy,
\begin{equation*} 
\left| J_{\mathrm{proj}(\cdot)}(x)  - 1 \right| + \left| J_{\mathrm{proj}^{-1}(\cdot)}(x) - 1 \right| \leq C \size(\cu_{\P_\ep}(x)) \ll 1,
\end{equation*}
because $\size(\cu_{\P_\ep}(x)) \leq C \ep^{\frac{1-\delta}{2}}$. 

\smallskip

\emph{Estimate of $\mathrm T_1$.} Using the expansion in~\eqref{eq:formulaKLS} we may write, for $x_0\in \Omega$ and $x\in\partial \Omega$,  the Poisson kernel $P^\ep_\Omega(x_0,x)$ as follows: 
\begin{align}\label{eq:formulaKLS'}
P^\ep_\Omega(x_0,x) \notag
& =\overline{P}_\Omega(x_0,x)\omega^\ep(x)+R^\ep(x_0,x)  \\ \notag
& = \overline{P}_\Omega(x_0,x) \sum_{\cu \in \P_\ep} \psi_{\cu}(x) \omega^\ep(x)+R^\ep(x_0,x) \\ \notag
& = \overline{P}_\Omega(x_0,x) \sum_{\cu \in \P_\ep} \psi_{\cu}(x) \tilde \omega^\ep(\cu,x)  \\   
& \qquad +  \overline{P}_\Omega(x_0,x) \sum_{\cu \in \P_\ep} \psi_{\cu}(x)  \left(\omega^\ep(x) - \tilde\omega^\ep(\cu,x) \right) + R^\ep(x_0,x),
\end{align}
where $R^\ep(x_0,x)$ satisfies the bound~\eqref{e.Repbound}. Observe that 
by Corollary \ref{cor.approxomegaeps}, we have the following bound 
\begin{equation*}
\big| \omega^\ep(x) - \tilde\omega^\ep(\cu,x) \big| \leq C_\rho \left( \ep^{\frac12}+\frac{\size^{2+\rho}\left(\cu_{\P_\ep}\!\left(x\right)\right)}{\ep^{1+\rho}}  \wedge 1 \right),
\end{equation*}
for $\sigma\in(0,1)$. Thus, we can bound the second term on the right of~\eqref{eq:formulaKLS'} for $x_0\in \Omega\setminus \Gamma_\ep$ by
\begin{multline*}
\left| \overline{P}_\Omega(x_0,x)  \psi_{\cu}(x)  \big(\omega^\ep(x) - \tilde\omega^\ep(\cu,x) \big) \right| \\
 \leq \frac{C_\rho\dist(x_0,\partial \Omega)}{ \left| x_0- x \right|^{d}} \left( \ep^{\frac12}+\frac{\size^{2+\rho}\left(\cu_{\P_\ep}\!\left(x\right)\right)}{\ep^{1+\rho}}  \wedge 1 \right). 
\end{multline*}
We therefore obtain from the above estimates and~\eqref{e.Repbound} that 
\begin{multline}
\label{e.combine2}
 \mathrm{T}_1  
 \leq C_\rho \left\| g \right\|_{L^\infty(\partial\Omega\times\Rd)} \sum_{\cu\in\P_\ep} \frac{\dist(x_0,\partial \Omega)}{ \left| x_0- \bar{x}(\cu) \right|^{d}} \size(\cu)^{d-1} \left( \ep^{\frac12}+\frac{\size^{2+\rho}(\cu)}{\ep^{1+\rho}}  \wedge 1\right)  \\
 +  C\ep\left\| g \right\|_{L^\infty(\partial\Omega\times\Rd)}  \int_{\partial \Omega} \left|x_0-x \right|^{-d}\log\left(\ep^{-1}\left|x_0-x \right|+2\right)\,dx.
\end{multline}
Notice that the integral in the second term on the right is bounded by $C$, and therefore the second term is actually bounded by $C$ times the first term and can be neglected. Since 
\begin{equation*}
\sum_{\cu\in\P_\ep} \frac{\dist(x_0,\partial \Omega)}{ \left| x_0- \bar{x}(\cu) \right|^{d}} \size(\cu)^{d-1} \leq C,
\end{equation*}
and $\ep\leq \size(\cu)$ for any cube $\cu\in\P_\ep$, we obtain
\begin{equation}
\label{e.T1}
  \mathrm{T}_1 
\leq C\ep^{\frac12} + C \sum_{\cu\in\P_\ep} \left(\frac{\size^3(\cu)}{\ep^2}\wedge 1\right)  \frac{\dist(x_0,\partial \Omega)}{ \left| x_0- \bar{x}(\cu) \right|^{d}} \size(\cu)^{d-1}.
\end{equation}

\smallskip

\emph{Estimate of $\mathrm T_2$.} We freeze the slow variables and, in each $\cu$, move the integral from $\partial \Omega$ to the boundary of the half-space~$D(\cu)$. Recall first the bound
\begin{equation*} 
\left\| \tilde\omega^\ep(\cu, \cdot)\right\|_{L^\infty} +  \left\|g(\cdot,\cdot)\right\|_{L^\infty} \leq C.
\end{equation*}
Using this, we have, for every $x \in \partial D(\cu) \cap \supp \psi_{\cu}(\mathrm{proj}(\cdot))$, 
\begin{equation*} 
\left| \tilde\omega^\ep(\cu, \mathrm{proj}(x)) -  \tilde\omega^\ep(\cu,x) \right| \leq C \left\|\nabla \tilde \omega^\ep(\cu,\cdot)\right\|_{L^\infty} \size^2(\cu)  \leq C \left( \frac{\size^2(\cu)}{\ep} \wedge 1\right) ,
\end{equation*}
which is due to Lemma~\ref{lem.estlayer}, and 
\begin{multline*} 
\left| g\left(\mathrm{proj}(x),\frac {\mathrm{proj}(x)}\ep \right) - g\left(\bar x(\cu),\frac {x}\ep \right) \right| \\ \leq C  \left( \left\|\nabla_x g \right\|_{L^\infty}\size(\cu) + \left\|\nabla_y g \right\|_{L^\infty}\frac{\size^2(\cu)}{\ep}     \right) \wedge 2 \|Êg\|_{L^\infty} \leq C\left( \frac{\size^2(\cu)}{\ep} \wedge 1\right)  
\end{multline*}
by the regularity of $g$ and by the fact that $\size(\cu) \geq c \ep$. 
Moreover, the Jacobian determinant satisfies 
\begin{equation*} 
\left| J_{\mathrm{proj}^{-1}(\cdot)}(x) - 1 \right| \leq C \size(\cu) \leq  C \left( \frac{\size^2(\cu)}{\ep} \wedge 1 \right).
\end{equation*}
Recording the error we make, for each $x \in \partial D(\cu) \cap \supp \psi_{\cu}(\mathrm{proj}(\cdot))$,
\begin{multline*}
\bigg| \tilde\omega^\ep(\cu,\mathrm{proj}(x))  g\left(\mathrm{proj}(x),\frac {\mathrm{proj}(x)}\ep \right) J_{\mathrm{proj}^{-1}(\cdot)}(x)  
 - \tilde\omega^\ep(\cu,x) g\left(\bar x(\cu),\frac {x}\ep \right) \bigg| \\ \leq C\left( \frac{\size^2(\cu)}{\ep} \wedge 1\right)  .
\end{multline*}
Thus we obtain, treating $\overline{P}_\Omega$ as in the case of the term $T_1$,
\begin{equation}
\label{e.T2}
\mathrm{T}_2 \leq C \sum_{\cu\in\P_\ep} \size(\cu)^{d-1}\frac{\dist(x_0,\partial \Omega)}{ \left| x_0- \bar{x}(\cu) \right|^{d}}\left( \frac{\size^2(\cu)}\ep\wedge1\right).
\end{equation}

\smallskip

\emph{Estimate of $\mathrm T_3$.} Applying the ergodic theorem of Proposition~\ref{p.ergodicthm}, we can compute the integrals over the flat half-spaces $\partial D(\cu)$ for $x_0\in\Omega \setminus \Gamma_\ep$ up to a tiny error. Let us do some computations to fit into the framework of our ergodic theorem. First of all, notice that for all $x\in\partial D(\cu)$, for all $z=M^Tx$ ($M$ is as usual an orthogonal matrix sending $e_d$ to $n(\bar{x}(\cu))$ and $N$ its $d-1$ first columns), for all $1\leq i,\, j\leq L$, we have
\begin{align*}
\lefteqn{
{\tilde{\omega}^\ep}_{ij}(\cu,x)
} \quad & \\
& =  h_{ik}(\bar{x}) n(\bar{x}) \cdot \nabla \bigg( \mathbf{p}_{lk}(x)+ \ep\chi^{*}_{lk}\!\left(\frac{x}\ep\right)  
+ \ep V^{*}_{lk}\!\left(\cu,\frac{x}\ep\right) \bigg)\cdot n(\bar{x})\a_{lj}\!\left(\frac {x}\ep \right)n(\bar{x})\cdot n(\bar{x}) \\
& = h_{ik}(\bar{x})\a_{kj}\!\left(\frac {x}\ep \right)n(\bar{x})\cdot n(\bar{x}) 
+h_{ik}(\bar{x})n(\bar{x})\cdot\nabla\chi^*_{lk}\!\left(\frac {x}\ep \right)\cdot n(\bar{x})\a_{lj}\!\left(\frac {x}\ep \right)n(\bar{x})\cdot n(\bar{x}) \\
& \qquad+h_{ik}(\bar{x})n(\bar{x})\cdot\nabla V^*_{lk}\!\left(\frac {x}\ep \right)\cdot n(\bar{x})\a_{lj}\!\left(\frac {x}\ep \right)n(\bar{x})\cdot n(\bar{x}).
\end{align*}
Now, we have
\begin{equation*}
h_{ik}(\bar{x})\a_{kj}\!\left(\frac {x}\ep \right)n(\bar{x})\cdot n(\bar{x})=h_{ik}(\bar{x})\b^{dd}_{kj}\!\left(\frac {Nz'+c(\bar{x})n(\bar{x})}\ep \right),
\end{equation*}
as well as 
\begin{equation*}
\begin{aligned}
&h_{ik}(\bar{x})n(\bar{x})\cdot\nabla\chi^*_{lk}\!\left(\frac {x}\ep \right)\cdot n(\bar{x})\a_{lj}\!\left(\frac {x}\ep \right)n(\bar{x})\cdot n(\bar{x})\\
&=h_{ik}(\bar{x})n(\bar{x})\cdot \nabla\chi^*_{lk}\!\left(\frac {Nz'+c(\bar{x})n(\bar{x})}\ep \right)\cdot n(\bar{x})\b^{dd}_{kj}\!\left(\frac {Nz'+c(\bar{x})n(\bar{x})}\ep \right),
\end{aligned}
\end{equation*}
and also
\begin{equation*}
\begin{aligned}
&h_{ik}(\bar{x})n(\bar{x})\cdot\nabla V^*_{lk}\!\left(\frac {x}\ep \right)\cdot n(\bar{x})\a_{lj}\!\left(\frac {x}\ep \right)n(\bar{x})\cdot n(\bar{x})\\
&=h_{ik}(\bar{x})e_d\cdot \left(\begin{array}{c}N^T\nabla_\theta\\ \partial_t\end{array}\right) \mathcal V^*_{lk}\!\left(\frac {Nz'}\ep, \frac{c(\bar{x})}{\ep} \right)\cdot n(\bar{x})\b^{dd}_{kj}\!\left(\frac {Nz'+c(\bar{x})n(\bar{x})}\ep \right)\\
&=h_{ik}(\bar{x})\partial_t\mathcal V^*_{lk}\!\left(\frac {Nz'}\ep, \frac{c(\bar{x})}{\ep} \right)\cdot n(\bar{x})\b^{dd}_{kj}\!\left(\frac {Nz'+c(\bar{x})n(\bar{x})}\ep \right),
\end{aligned}
\end{equation*}
where $\mathcal V^*$ is the solution given by Proposition \ref{p.halfspace} to \eqref{eq-bdarylayertorus} with $\b$ replaced by $\b^*$, $a=\frac{c(\bar{x})}{\ep}$ and $\mathcal V_0=-\chi^*\left(\theta+\frac{c(\bar{x})}{\ep} \right)$. Notice that for all $\theta\in\mathbb T^d$, for all $t>a$,
\begin{equation*}
\mathcal V^*(\theta,t)=\widetilde{\mathcal V}^*(\theta+na,t-a),
\end{equation*}
with $\widetilde{\mathcal V}^*$ the unique solution given by Proposition \ref{p.halfspace} to \eqref{eq-bdarylayertorus} with $\b$ replaced by $\b^*$, $a=0$ and $\mathcal V_0=-\chi^*(\theta)$. Let us now compute the integral. We rotate the hyperplane $\partial D(\cu)$ and change the variable. Let 
\begin{equation*}
r:=\size(\cu),\quad \eta:=\frac{\ep}{r}\quad \mbox{and}\quad a:=\frac{c(\bar{x})}\ep. 
\end{equation*}
We have, with $x = r N z' + c(\bar x) n(\bar x)$, $z' \in \R^{d-1}$, 
\begin{equation*}
\begin{aligned}
&\int_{\partial D(\cu)} \psi_\cu(\mathrm{proj}(x)) \overline{P}_\Omega (x_0,\mathrm{proj}(x)) \tilde{\omega}^\ep(\cu,x)  g\left(\bar{x},\frac x\ep \right) \,d\H^{d-1}(x)\\
&=\int_{\R^{d-1}} \Psi_{x_0,r}(z')  h(\bar{x})\left(\Idd_L+n(\bar{x})\cdot \nabla\chi^*\!\left(\frac {Nz'}{\eta}+an(\bar{x}) \right)\cdot n(\bar{x})\right.\\
&\qquad\left.+\partial_t\widetilde{\mathcal V}^*\!\left(\frac {Nz'}{\eta}+an(\bar{x}) ,0\right)\cdot n(\bar{x})\right)\\
&\qquad\qquad\b^{dd}\!\left(\frac {Nz'}{\eta}+an(\bar{x}) \right)g\left(\bar{x},\frac {Nz'}{\eta}+a n(\bar{x})
 \right)\,d z',
\end{aligned}
\end{equation*}
where, for all $z'\in\R^{d-1}$,
\begin{equation*}
\Psi_{x_0,r}(z'):=r^{d-1} \psi_\cu\big(\mathrm{proj}(r N z' + c(\bar{x}) n(\bar x) )\big) \overline{P}_\Omega \big(x_0,\mathrm{proj}(r N z' + c(\bar{x}) n(\bar x)) \big).
\end{equation*}
Applying the ergodic theorem for $\Psi=\Psi_{x_0,r}$, $K$ defined for $\theta\in\R^d$ by
\begin{multline*}
K(\theta):=h(\bar{x})\Big(\Idd_L+e_d\cdot M^T\nabla\chi^*\!\left(\theta+an(\bar{x})\right)\cdot n(\bar{x})\\
+\partial_t\widetilde{\mathcal V}^*\!\left(\theta+an(\bar{x}) ,0\right)\cdot n(\bar{x})\Big)\b^{dd}\!\left(\theta+an(\bar{x}) \right)g\left(\bar{x},\theta+an(\bar{x}) \right),
\end{multline*}
and $f(|\xi|)=|\xi|^{-\kappa}$, we eventually get for all $k\in\N$,
\begin{multline}\label{e.ergothappl}
\left| 
\int_{\R^{d-1}} \Psi_{x_0,r}(z') K\!\left( \frac{Nz'}{\eta} \right)\,dz' 
- \widehat{K}(0) \int_{\R^{d-1}} \Psi_{x_0,r}(z') \, dz' 
\right|  \\
\leq 
\left( A^{-1}(\bar x(\cu)) \eta \right)^k 
\left( \int_{\R^{d-1}} \left| \nabla_{z'}^k \Psi_{x_0,r}(z') \right|\,dz' \right)
\left( \sum_{\xi \in \Zd\setminus\{ 0 \}} 
\left| \widehat{K}(\xi) \right| \left| f(|\xi|) \right|^{-k} \right).
\end{multline}
Notice that using the bound \eqref{e.bar x(cu)}, we have
\begin{equation*}
\A^{-1}(\bar x(\cu))\eta \leq \frac{\size(\cu)}{\ep^{1-\delta}}\frac{\ep}{\size(\cu)}=\ep^\delta.
\end{equation*}
Moreover, 
\begin{multline}\label{e.hatk}
\widehat{K}(0) \\
=\int_{\mathbb T^d}h(\bar{x})\Big(\Idd_L+n(\bar{x})\cdot \nabla\chi^*\!\left(\theta\right)\cdot n(\bar{x})+\partial_t\widetilde{\mathcal V}^*\!\left(\theta,0\right)\cdot n(\bar{x})\Big)\b^{dd}\!\left(\theta \right)g\left(\bar{x},\theta\right)\,d\theta.
\end{multline}
We can now proceed with the definition of $\bar{g}$.

\begin{definition}\label{def.defbarg}
For $x\in\partial\Omega$,
\begin{multline}\label{e.defbarg}
\bar{g}(x)\\
:=\int_{\mathbb T^d}h(x)\Big(\Idd_L+n(x)\cdot \nabla\chi^*\!\left(\theta\right)\cdot n(x)+\partial_t\widetilde{\mathcal V}^*\!\left(\theta,0\right)\cdot n(x)\Big)\b^{dd}\!\left(\theta \right)g\left(x,\theta\right)\,d\theta.
\end{multline}
where $n(x)\in\partial B_1$ is, as usual, the outer unit normal at $x$, and $\widetilde{\mathcal V}^*$ is the associated higher-dimensional boundary layer corrector solving \eqref{eq-bdarylayertorus} with $\b$ replaced by $\b^*$, $a=0$ and $\mathcal V_0=-\chi^*(\theta)$. 
\end{definition}

Notice that for $x=\bar{x}(\cu)$, the definition is consistent with \eqref{e.hatk}, as it should be. Let us point out that we define $\bar{g}$ for any $x\in\partial\Omega$, because the expression on the right-hand side of \eqref{e.defbarg} makes sense whether or not $n(x)$ belongs to $\R\Zd$. That being said, the convergence in the ergodic theorem only holds for Diophantine directions. Moreover, since the complementary of the set of all Diophantine directions with some positive constant $A$ is of measure zero, the only directions that matter are the Diophantine ones.

\smallskip

We finally estimate $\nabla_{z'}^k \Psi_{x_0,r}(z')$. To this end,
\begin{equation*} 
\left|\nabla_x^m \psi_{\cu}(x) \right| \leq  \frac{C(m,d)}{r^m} \quad \mbox{and} \quad 
\left| \nabla_x^m \overline P_{\Omega}(x_0,x) \right| \leq \frac{C(m,\Omega,d,L)}{ |x-x_0|^{d-1+m}},
\end{equation*}
and since $|x-x_0| \geq cr$ we have that 
\begin{equation*} 
\left| \nabla_x^m \left( \psi_{\cu}(\cdot)  \overline P_{\Omega}(x_0,\cdot))(x) \right) \right| \leq  \frac{C(m,\Omega,d,L)}{r^{d-1+m}} 
\end{equation*}
Therefore, by the chain rule and bounds on the derivatives of $\mathrm{proj}(\cdot)$, we get
 \begin{equation*} 
\left| \nabla_{z'}^k \Psi_{x_0,r}(z')\right| \leq C \indc_{\supp \psi_\cu}(r N z' + c(\bar x) n(\bar x)),
\end{equation*}
so that
\begin{equation*}
\int_{\R^{d-1}} \left| \nabla^k \Psi_{x_0,r}(z') \right|\,dz'\leq C.
\end{equation*}
It follows now from \eqref{e.ergothappl} that there exists $C(d,L,\lambda,\Omega,\delta,\kappa)$ such that
\begin{align}
\label{e.appliergo}
\mathrm{T}_3 & \leq
\sum_{\cu\in\P_\ep}\bigg|  \int_{\partial D(\cu)} \psi_\cu(x) \overline{P}_\Omega (x_0,\mathrm{proj}(x)) \tilde\omega^\ep(\cu,x)  g\left(\bar{x}(\cu),\frac {x}\ep \right) \,d\H^{d-1}(x) \\ \notag
& \qquad\qquad - \int_{\partial D(\cu)}  \psi_\cu(x) \overline{P}_\Omega (x_0,\mathrm{proj}(x))\overline g(\bar{x}(\cu)) \,d\H^{d-1}(x) \bigg|  \\ \notag
& \leq C_\delta\ep^{1000}.
\end{align}
Note that the error in~\eqref{e.appliergo} can actually be made arbitrarily small, in the sense that we can have whatever finite power of~$\ep$ we like at the cost of a larger constant~$C$.

\smallskip

\emph{Estimate of $\mathrm T_4$.} We use Proposition~\ref{p.contbarg}, proved below, to see that the homogenized boundary condition $\overline{g}(x)$ is close to $\overline{g}(\bar{x}(\cu))$ for $x\in \partial \Omega \cap 5 \cu$. These differ by an amount depending on $\size(\cu)$. Indeed, by Proposition~\ref{p.contbarg} and~\eqref{e.bar x(cu)},
\begin{align*} \label{}
\left| \overline{g}(x) - \overline{g}(\bar{x}(\cu)) \right|
& \leq C \left( \frac{\size^2(\cu)}{A^{\frac52} (\bar{x}(\cu))} +  \frac{\size(\cu)}{A^{\frac32} (\bar{x}(\cu))} \right)   \leq C \left( \frac{\size^{\frac{9}2}(\cu)}{\ep^{\frac52(1-\delta)}} + \frac{\size^{\frac{5}2}(\cu)}{\ep^{\frac32(1-\delta)}}   \right).
\end{align*}
Using~\eqref{e.unifboundarylayer} to get 
\begin{equation*} 
\frac{\size^{\frac{3}2}(\cu)}{\ep^{\frac12(1-\delta)}} + \frac{\ep^{\frac12(1-\delta)}}{\size^{\frac{1}2}(\cu)} \leq C,
\end{equation*}
and the boundedness of $\overline{g}$, we obtain
\begin{equation} 
\label{e.lipcontrolgbar}
\left| \overline{g}(x) - \overline{g}(\bar{x}(\cu)) \right| 
\leq C  \left( \frac{\size^{3}(\cu)}{\ep^{2}}   \wedge  1 \right).
\end{equation}
Using this, we can now estimate $\mathrm{T}_4$: for every $x_0\in \Omega\setminus \Gamma_\ep$, we have
\begin{equation*}
\mathrm{T}_4 \leq C \sum_{\cu\in\P_\ep}\size(\cu)^{d-1}\frac{\dist(x_0,\partial \Omega)}{ \left| x_0- \bar{x}(\cu) \right|^{d}} \left( \frac{\size(\cu)^3}{\ep^2} \wedge 1 \right).
\end{equation*}

\smallskip

\emph{Estimate of $\mathrm T_5$.} For the last term we change the variables back, and use 
\begin{equation*} 
|J_{\mathrm{proj}(\cdot)}(x) - 1| \leq C \size(\cu) 
\end{equation*}
together with bounds for $\overline P_\Omega$ and the boundedness of $\overline{g}$, to get
\begin{align*}
\mathrm{T}_5  
& = \sum_{\cu\in\P_\ep} \bigg| \int_{\partial D(\cu)}  \psi_\cu(\mathrm{proj}(x)) \overline{P}_\Omega (x_0,\mathrm{proj}(x))\overline g(\mathrm{proj}(x)) \,d\H^{d-1}(x)\\
& \qquad\qquad\qquad\qquad \qquad - \int_{\partial \Omega}  \psi_\cu(x) \overline{P}_\Omega (x_0,x)\overline g(x) \,d\H^{d-1}(x)\bigg| \\
& \leq C \sum_{\cu\in\P_\ep}\size(\cu)^{d}\frac{\dist(x_0,\partial \Omega)}{ \left| x_0- \bar{x}(\cu) \right|^{d}}.
\end{align*}
The term on the right, in turn, is bounded by the errors appearing in estimates for $T_1$ and $T_4$ by~\eqref{e.unifboundarylayer}, completing the proof of Proposition~\ref{p.goalest}. \qed

\smallskip

We complete this section by stating and proving the Sobolev regularity result for the homogenized boundary condition $\bar{g}$ defined in Definition \ref{def.defbarg}. The proposition was used above in the estimate of $T_4$. 

\begin{proposition}\label{p.contbarg}
There exists $0<\nu_0(d)<\infty$ and $0<C(d,L,\lambda,\a,g)<\infty$ such that for any $x_1,\, x_2\in\partial\Omega$ and $n_i := n(x_i)$, $i \in \{1,2\}$, if $n_2$ Diophantine with constant $\A$ and $|n_1-n_2|<\nu_0$, then
\begin{equation}\label{e.contbarg}
|\bar{g}(x_1)-\bar{g}(x_2)|\leq \frac{C|n_1-n_2|}{\A^{\frac32}}\left(1+\frac{|n_1-n_2|}{\A}\right).
\end{equation}
\end{proposition}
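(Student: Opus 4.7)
The strategy is to compare the two boundary layer correctors $\widetilde{\mathcal V}^*_i$ ($i=1,2$) associated with the normals $n_i := n(x_i)$, exploiting that in the defining formula \eqref{e.defbarg} both correctors solve \eqref{eq-bdarylayertorus} (with $\b$ replaced by $\b^*$ and $a=0$) with the \emph{same} boundary data $\mathcal V_0(\theta) = -\chi^*(\theta)$, which is independent of $n_i$. This is what opens the door to applying Lemma \ref{lem.estVt} to the difference $\mathcal V := \widetilde{\mathcal V}^*_1 - \widetilde{\mathcal V}^*_2$.

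First I would decompose $\bar g(x_1) - \bar g(x_2)$ by telescoping over the four sources of $x$-dependence in the integrand of \eqref{e.defbarg}: the scalar $h(x) = (\bar\a n(x) \cdot n(x))^{-1}$, the polynomial factor $n(x)\cdot\nabla\chi^*(\theta)\cdot n(x)$, the boundary-layer trace $\partial_t\widetilde{\mathcal V}^*(\theta,0)\cdot n(x)$, and the smooth boundary datum $g(x,\theta)$. Since $\Omega$ is smooth and uniformly convex, the Gauss map $x\mapsto n(x)$ is a bi-Lipschitz diffeomorphism of $\partial\Omega$ onto $\partial B_1$; consequently $h$, $n$, and $g(\cdot,\theta)$ are Lipschitz in $x$ uniformly in $\theta$, so the three telescoping contributions that do not touch the corrector trace are each bounded by $C|n_1-n_2|$ after integration against the bounded factor $\b^{dd}(\theta)$.

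The nontrivial contribution is the one from the boundary-layer trace. I would split it as
\begin{equation*}
\partial_t\widetilde{\mathcal V}^*_1(\theta,0)\cdot n_1 - \partial_t\widetilde{\mathcal V}^*_2(\theta,0)\cdot n_2 = \partial_t\mathcal V(\theta,0)\cdot n_1 + \partial_t\widetilde{\mathcal V}^*_2(\theta,0)\cdot(n_1-n_2).
\end{equation*}
The second summand contributes at most $C|n_1-n_2|$ after integration, using the uniform $L^\infty$ bound on $\partial_t\widetilde{\mathcal V}^*_2(\cdot,0)$ provided by \eqref{eq-decaydiophLinfty} in Proposition \ref{p.halfspace} (evaluated at $t=a=0$, where the right-hand side carries no $\A$-penalty). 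For the first summand, I would combine Lemma \ref{lem.estVt}, which gives
\begin{equation*}
\|\partial_t\mathcal V\|_{H^s(\mathbb T^d\times[0,\infty))}^2 \leq \frac{C|n_1-n_2|^2}{\A^3}\left(1 + \frac{|n_1-n_2|^2}{\A^2}\right),
\end{equation*}
with the Sobolev trace inequality, which controls $\|\partial_t\mathcal V(\cdot,0)\|_{L^\infty(\mathbb T^d)}$ by $\|\partial_t\mathcal V\|_{H^s(\mathbb T^d\times[0,\infty))}$ provided $s$ is chosen strictly greater than $(d+1)/2$. Taking square roots and applying the elementary inequality $\sqrt{1+t^2}\leq 1+t$ for $t\geq 0$ yields the factor $C|n_1-n_2|\A^{-3/2}(1 + |n_1-n_2|/\A)$, which is exactly \eqref{e.contbarg}.

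The main obstacle I anticipate is organizational rather than analytical: carrying out the telescoping cleanly and verifying that Lemma \ref{lem.estVt} (stated for $\mathcal V$ solving \eqref{eq-bdarylayertorusdiff}) applies verbatim to the adjoint system satisfied by $\mathcal V=\widetilde{\mathcal V}^*_1-\widetilde{\mathcal V}^*_2$. The essential analytic input---the difference estimate of Lemma \ref{lem.estVt}---is already in hand, and the trace step is routine since Lemma \ref{lem.estVt} furnishes derivatives of arbitrary order.
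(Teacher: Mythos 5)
Your proposal is correct and follows essentially the same route as the paper: the paper likewise reduces the matter to the continuity of $\partial_t\widetilde{\mathcal V}^*$ in $n$ (treating the $h$, $n\cdot\nabla\chi^*\cdot n$, and $g$ factors as smooth in $x$) and obtains the $L^\infty$ bound on $\partial_t(\widetilde{\mathcal V}_1^*-\widetilde{\mathcal V}_2^*)$ at $t=0$ by combining estimate \eqref{e.estVt} of Lemma~\ref{lem.estVt} with Sobolev embedding. Your telescoping and the square-root manipulation $\sqrt{1+t^2}\leq 1+t$ just make explicit what the paper leaves implicit.
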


\begin{proof}
To show the continuity of the function~$\bar{g}$ defined in \eqref{e.defbarg}, the only thing that remains to be proved is the continuity of $\partial_t\widetilde{\mathcal V}^*$ in~$n$. Take $x_1,\, x_2\in\partial\Omega$, such that 
$n_2:=n(x_2)$ is Diophantine with constant $\A$ and $|n_1-n_2|<\nu_0$. Estimate \eqref{e.estVt} of Proposition \ref{lem.estVt} together with Sobolev's embedding theorem implies that there is a constant $C(d,L,\lambda,\a)<\infty$ such that
\begin{equation*}
\left\|\partial_t\big(\widetilde{\mathcal V_1}^*-\widetilde{\mathcal V_2}^*\big)\right\|_{L^\infty(\mathbb T^d\times\{0\})}\leq C\frac{|n_1-n_2|}{\A^{\frac32}}\left(1+\frac{|n_1-n_2|}{\A}\right).
\end{equation*}
This concludes the proof of Proposition \ref{p.contbarg}.
\end{proof}

Notice that in the statement of the previous proposition, nothing is assumed of the direction $n_1$, which may be arbitrary in $\partial B_1$. As a result of Proposition~\ref{p.contbarg}, we obtain some regularity on the function $\bar{g}$.

\begin{proposition}
\label{p.gbarregularity}
Suppose that $d>2$.  Then the function~$\bar{g}$ satisfies 
\begin{equation*} 
\nabla \bar{g} \in L^{\frac{2(d-1)}3,\infty}(\partial \Omega).
\end{equation*}
If $d=2$, then $\bar g \in W^{s,1}(\partial \Omega)$ for all $s\in \left(0,\frac23 \right)$.
\end{proposition}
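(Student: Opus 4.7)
The strategy is to deduce the stated Sobolev regularity from the quantitative continuity estimate of Proposition~\ref{p.contbarg} together with the weak-type bound~\eqref{e.Diophest} on $\A^{-1}$. Since $\partial\Omega$ is smooth and uniformly convex, the Gauss map $x \mapsto n(x)$ is Lipschitz, so $|n(x_1)-n(x_2)|\leq C|x_1-x_2|$. Combining this with Proposition~\ref{p.contbarg} and the uniform bound $|\bar g(x_1) - \bar g(x_2)| \leq 2\|\bar g\|_{L^\infty}$ obtained from~\eqref{e.defbarg}, one first arrives at
\begin{equation}\label{e.planpointwise}
|\bar g(x_1) - \bar g(x_2)| \leq C\left( \frac{|x_1-x_2|}{\A(x_2)^{3/2}} + \frac{|x_1-x_2|^2}{\A(x_2)^{5/2}} \right) \wedge C
\end{equation}
whenever $|x_1-x_2|$ is small and $n(x_2)$ is Diophantine. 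Since $\H^{d-1}$-a.e.\ point of $\partial\Omega$ has a Diophantine normal, the same estimate holds after swapping $x_1$ and $x_2$, and~\eqref{e.Diophest} yields $\A^{-3/2} \in L^{\frac{2(d-1)}{3},\infty}(\partial\Omega)$.

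For $d>2$, a short case analysis on $|x_1-x_2|$ versus $\A$ shows that~\eqref{e.planpointwise} implies the Haj{\l}asz-type inequality
\[
|\bar g(x_1) - \bar g(x_2)| \leq |x_1-x_2|\bigl(H(x_1) + H(x_2)\bigr), \qquad H(x) := C\A(x)^{-3/2},
\]
for $\H^{d-1}$-a.e.\ pair $(x_1,x_2)$ that is sufficiently close. Since $\frac{2(d-1)}{3} > 1$, a standard mollification argument --- controlling $\bar g - (\bar g)_r$ pointwise by the Hardy--Littlewood maximal function of $H$, which is bounded on $L^{p,\infty}$ for $p>1$ --- shows that $\bar g$ is weakly differentiable along $\partial\Omega$ with $|\nabla \bar g| \leq CH$ a.e.; by~\eqref{e.Diophest} this gives $\nabla \bar g \in L^{\frac{2(d-1)}{3},\infty}(\partial\Omega)$.

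For $d=2$, one estimates the Gagliardo seminorm of $\bar g$ directly. For each $\theta \in (0,1)$, interpolating the first summand of~\eqref{e.planpointwise} with the uniform bound yields
\[
|\bar g(x_1) - \bar g(x_2)| \leq C|x_1-x_2|^{1-\theta}\min(\A(x_1),\A(x_2))^{-\frac{3(1-\theta)}{2}},
\]
and an analogous expression treats the $|x_1-x_2|^2/\A^{5/2}$ term. Inserting into the double integral defining $[\bar g]_{W^{s,1}(\partial\Omega)}$, using $\min(a,b)^{-\gamma} \leq a^{-\gamma} + b^{-\gamma}$ to decouple the two variables, and applying Fubini leads to
\[
[\bar g]_{W^{s,1}(\partial\Omega)} \leq C\int_{\partial\Omega} \A(x)^{-\frac{3(1-\theta)}{2}}\left(\int_{\partial\Omega}\frac{d\H^1(y)}{|x-y|^{s+\theta}}\right) d\H^1(x) + (\text{analogous term}).
\]
The inner integral is finite for $s+\theta<1$, and the outer integrand lies in $L^1(\partial\Omega)$ when $\frac{3(1-\theta)}{2} < 1$, since $\A^{-\alpha}\in L^{1/\alpha,\infty}\subset L^1$ on a set of finite measure whenever $\alpha<1$. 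Any $s<\tfrac23$ admits a choice $\theta \in (\tfrac13, 1-s)$ making both constraints hold (the second summand allows a wider range of $\theta$), establishing $\bar g \in W^{s,1}(\partial\Omega)$.

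The main obstacle is the $d=2$ case, where the pointwise Lipschitz constant $\A^{-3/2}$ lies only in weak $L^{2/3}$ --- a space that fails to be locally integrable, so no direct Sobolev-embedding or Haj{\l}asz-type argument applies. Trading some Lipschitz decay against the uniform $L^\infty$ bound on $\bar g$ via the parameter $\theta$ is precisely what produces the critical exponent $s=\tfrac23$.
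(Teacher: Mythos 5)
Your argument is correct, and for $d>2$ it takes a genuinely different route from the paper. The paper never passes through a Haj{\l}asz-type inequality: it re-uses the Calder\'on--Zygmund machinery of Proposition~\ref{p.CZdecomposition} applied to $F=\tau\A^{-1}$, builds explicit smooth approximants $\bar g_\tau=\sum_\cu\bar g(\bar x(\cu))\psi_\cu$ with the partition of unity of Corollary~\ref{c.partitionofunity}, shows $\sup_{5\cu}|\bar g-\bar g_\tau|\leq C\tau^{1/2}(\inf_{3\cu}\A^{-1})^{3/2}$ and $|\nabla\bar g_\tau|\leq C(\inf_{3\cu}\A^{-1})^{3/2}$, and concludes by dominating $\{\nabla\bar g_\tau\}$ uniformly in $L^{\frac{2(d-1)}{3},\infty}$. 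Your route converts Proposition~\ref{p.contbarg} directly into the pointwise inequality $|\bar g(x_1)-\bar g(x_2)|\leq|x_1-x_2|(H(x_1)+H(x_2))$ with $H=C\A^{-3/2}$ (the case analysis is right: when $|x_1-x_2|>\A$ the capped bound $C\leq C|x_1-x_2|\A^{-1}\leq C|x_1-x_2|\A^{-3/2}$ does the job since $\A\leq1$), and then invokes the standard mollification argument for the Haj{\l}asz--Sobolev inclusion; since $\partial\Omega$ has finite measure one can even sidestep weak-type maximal function bounds by noting $H\in L^q$ for some $q>1$ and recovering $\nabla\bar g\in L^{\frac{2(d-1)}{3},\infty}$ from the a.e.\ bound $|\nabla\bar g|\leq CH$. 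What each approach buys: the paper's construction yields concrete smooth approximants with a quantified convergence rate (objects that are reused in spirit in the estimate of $\mathrm{T}_4$), whereas yours is shorter and avoids the cube decomposition entirely, at the cost of importing the Haj{\l}asz characterization of Sobolev functions on the manifold $\partial\Omega$ --- and, as you correctly identify, it degenerates exactly when $\frac{2(d-1)}{3}\leq1$, i.e.\ in $d=2$. There your Gagliardo-seminorm interpolation with the parameter $\theta\in(\frac13,1-s)$ is essentially the paper's Step~2, applied directly to $\bar g$ (valid since the pointwise estimate holds for a.e.\ pair) rather than to the approximants $\bar g_\tau$ followed by Fatou; both give the same threshold $s<\frac23$.
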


\begin{proof}
\emph{Step 1.} 
We fix $\tau>0$ and apply Proposition~\ref{p.CZdecomposition} to the function $F(y):= \tau A^{-1}(y)$. Note that since $A\leq 1$, we have that $F \geq \tau>0$. The conclusion of the proposition gives us a collection~$\P_\tau$ of triadic cubes satisfying, for every $\cu\in\P_\tau$, 
\begin{equation}
\label{e.Pdeltasnap}
\inf_{x \in 3\cu \cap \partial \Omega} A^{-1}(x) \leq \tau^{-1} \size(\cu)
\end{equation}
as well as
\begin{equation}
\label{e.Pdeltacount}
\# \!\left\{ \cu \in \P_\tau\,:\, \size(\cu) \geq 3^n \right\}
\leq C 3^{-n(d-1)} \H^{d-1}\left( \left\{ x \in \partial \Omega \,:\, \tau A^{-1}(x)\geq 3^{n-2} \right\} \right)
\end{equation}
and, for every $\cu,\cu'\in \P$ such that $\dist(\cu,\cu') = 0$,  
\begin{equation*} \label{}
\frac13 \leq \frac{\size(\cu)}{\size(\cu')} \leq 3. 
\end{equation*}
Observe that~\eqref{e.Pdeltacount} and \eqref{e.Diophest} imply that 
\begin{equation*}
c\tau \leq \sup_{\cu\in \P_\tau} \size(\cu) \leq C\tau^{\frac12}
\end{equation*}
and thus, by~\eqref{e.Pdeltasnap}, for every $\cu\in\P_\tau$,
\begin{equation*}
\inf_{x \in 3\cu \cap \partial \Omega} A^{-1}(x) \leq C\tau^{-\frac12} \leq C\size^{-1}(\cu). 
\end{equation*}
That is, for every $\cu\in\P_\tau$, we have 
\begin{equation}
\label{e.sizesmash}
\size(\cu) \leq C \left( \inf_{x \in 3\cu \cap \partial \Omega} A^{-1}(x) \right)^{-1}.
\end{equation}
We take $\left\{ \psi_{\cu} \right\}_{\cu\in\P_\tau}$ to be the partition of unity given in Corollary~\ref{c.partitionofunity}. For each $\cu\in\P_\tau$, let $\bar{x}(\cu)\in 3\cu \cap \partial \Omega$ be such that 
\begin{equation*}
A^{-1}(\bar{x}(\cu)) \leq 2 \inf_{x \in 3\cu \cap \partial \Omega} A^{-1}(x).
\end{equation*}
Define
\begin{equation*}
\bar{g}_\tau(x):= \sum_{\cu\in\P_\tau} \bar{g}\left( \bar{x} (\cu) \right) \psi_\cu(x). 
\end{equation*}
According to Proposition~\ref{p.contbarg},~\eqref{e.Pdeltasnap} and the triangle inequality, we have, for every $\cu\in\P_\tau$,
\begin{multline*}
\sup_{x\in 5\cu\cap \partial \Omega} \left| \bar{g}(x) - \bar{g}_\tau(x) \right| \\
\leq C\size(\cu)\left( \inf_{x \in 3\cu \cap \partial \Omega} A^{-1}(x) \right)^{\frac32} + C\size^2(\cu) \left( \inf_{x \in 3\cu \cap \partial \Omega} A^{-1}(x) \right)^{\frac52}.
\end{multline*}
Applying~\eqref{e.sizesmash}, we find that the second term on the right side is bounded by $C$ times the one on the left, so we get
\begin{align}
\label{e.ggdeltaconv}
\sup_{x\in 5\cu\cap \partial \Omega} \left| \bar{g}(x) - \bar{g}_\tau(x) \right| 
& \leq C\size(\cu) \left( \inf_{x \in 3\cu \cap \partial \Omega} A^{-1}(x) \right)^{\frac32} \\ \notag
& \leq C\tau^{\frac12} \left( \inf_{x \in 3\cu \cap \partial \Omega} A^{-1}(x) \right)^{\frac32}.
\end{align}
Moreover, using $\left| \nabla \psi_\cu \right| \leq C\size^{-1}(\cu)$, we find that  
\begin{equation}
\label{e.gdeltabnded}
\sup_{x\in\cu} \left| \nabla \bar{g}_\tau(x) \right| \leq C \left( \inf_{x \in 3\cu \cap \partial \Omega} A^{-1}(x) \right)^{\frac32}. 
\end{equation}
The estimate~\eqref{e.ggdeltaconv} implies, for $d>2$, that, as $\tau \to 0$,
\begin{equation*}
\bar{g}_\tau \rightarrow \bar{g} \quad \mbox{in} \ L^p(\partial \Omega), \quad \mbox{for every} \ p \in \left[1,\frac{2(d-1)}3 \right). 
\end{equation*}
Meanwhile, the estimate~\eqref{e.gdeltabnded} implies that the sequence $\{\nabla\bar{g}_\tau \}_{\tau>0}$ is pointwise dominated by a function belonging to $L^{\frac{2(d-1)}3,\infty}(\partial \Omega)$. Thus, in particular, 
\begin{equation*}
\sup_{\tau>0} \left\| \nabla\bar{g}_\tau \right\|_{L^{\frac{2(d-1)}3,\infty}(\partial \Omega)} \leq C <\infty.
\end{equation*}
Thus $\bar{g} \in W^{1,p}(\partial \Omega)$ for every $p< \frac{2(d-1)}3$ and $d>2$, and $\nabla \bar{g} \in L^{\frac{2(d-1)}3,\infty}(\partial \Omega)$. 

\smallskip

\emph{Step 2.} Now we prove the estimate in the case $d=2$. Fix $s\in \left(0,\frac23 \right)$.
Then, using the results of the first step, together with the property 
that if $\dist(\cu,\cu') = 0$, then 
\begin{equation*} 
\frac13 \leq \frac{\size(\cu)}{\size(\cu')} \leq 3\,,
\end{equation*}
we obtain
\begin{equation*} 
\left|\bar g_\tau(x)-\bar g_\tau(y)\right| \leq C \left( |x-y| \left( \inf_{z \in \cu_{\P_\ep} (x) \cap \partial \Omega} A^{-1}(z) \right)^{\frac{3}{2}} \wedge 1 \right)
\end{equation*}
Notice that 
\begin{equation*} 
|x-y| \left( \inf_{z \in \cu_{\P_\ep} (x) \cap \partial \Omega} A^{-1}(z) \right)^{\frac{3}{2}}  \geq 1 \quad \implies |x-y|^{-\theta} \leq  
\left( \inf_{z \in \cu_{\P_\ep} (x) \cap \partial \Omega} A^{-1}(z) \right)^{\frac{3\theta }{2}}
\end{equation*}
and thus we may estimate, for any $\theta > s$, 
\begin{align*} 
\lefteqn{ \int_{\partial \Omega} \int_{\partial \Omega} \frac{|\bar g_\tau(x)-\bar g_\tau(y)|}{|x-y|^{d+s}} \, dx \,dy } \qquad &
 \\ & \leq C \sum_{\cu}  \int_{\partial \Omega} \int_{\partial \Omega \cap \cu} 
 \left( |x-y| \left( \inf_{z \in \cu \cap \partial \Omega} A^{-1}(z) \right)^{\frac{3}{2}} \wedge 1 \right)^\theta \, \frac{dx \,dy}{|x-y|^{d+s}}  \\ 
  & \leq C \sum_{\cu}  \int_{\partial \Omega} \int_{\partial \Omega \cap \cu} 
\left( \inf_{z \in \cu \cap \partial \Omega} A^{-1}(z) \right)^{\frac{3\theta }{2}}\, \frac{dx \,dy}{|x-y|^{d+s-\theta}} \\ 
  & \leq C  \int_{\partial \Omega} A^{-\frac{3\theta}{2}}(x) \, dx,
\end{align*}
where the last estimate follows by the fact that $\theta>s$. The result thus follows by the pointwise convergence of $\bar g_\tau$ to $\bar g$ and Fatou's lemma, provided that $\theta \in \left(s,\frac 23 \right)$, because $A^{-1} \in L^{1,\infty}$ when $d=2$.
\end{proof}

\begin{remark}[On the improvement by Shen and Zhuge]
Let us comment on the upgrade of the regularity of $\nabla\bar{g}$ by Shen and Zhuge in \cite{ShenZhuge_arXiv16}. Using a weighted estimate, they are able to refine the bounds on $\mathcal V=\mathcal V_1-\mathcal V_2$ in a layer close the boundary in the following way (see \cite[equation (6.11)]{ShenZhuge_arXiv16}): for all $\sigma\in(0,1)$, there exists a constant $C(d,L,\lambda,\a,\sigma)<\infty$ such that
\begin{equation*}
\int_{\mathbb T^d}\left|N^T\nabla_\theta\left(\mathcal V_1(\theta,0)-\mathcal V_2(\theta,0)\right)\right|d\theta\leq \frac{C|n_1-n_2|}{\A^{1+\sigma}}\left(1+\frac{|n_1-n_2|}{\A}\right).
\end{equation*}
Consequently, they can prove (see \cite[Theorem 6.1]{ShenZhuge_arXiv16}) that for any $\sigma\in(0,1)$, there exists a constant $C(d,L,\lambda,\a,g,\sigma)<\infty$ such that for any $x_1,\, x_2\in\partial\Omega$ and $n_i := n(x_i)$, $i \in \{1,2\}$, if $n_2$ Diophantine with constant $\A$ and $|n_1-n_2|<\nu_0$, then
\begin{equation*}
|\bar{g}(x_1)-\bar{g}(x_2)|\leq \frac{C|n_1-n_2|}{\A^{1+\sigma}}\left(1+\frac{|n_1-n_2|}{\A}\right).
\end{equation*}
Following our proof of Proposition \ref{p.gbarregularity}, this yields the improved regularity $\nabla\bar{g}\in L^q$ for any $q<d-1$ in dimension $d\geq 3$, and $\bar{g}\in W^{s,1}$ for any $s<1$ in dimension $d=2$. Moreover, the homogenization error \eqref{e.goalest} is improved to
\begin{multline} \label{e.goalestimproved}
\big| u^\ep(x_0) - \bar{u}(x_0) \big| \\
\leq C\ep^{\frac12}+ C_\sigma\ep^{-10\delta} \sum_{\cu \in \P_\ep} \left( \frac{\size^{2+\sigma}(\cu)}{\ep^{1+\sigma}} \wedge 1 \right) \frac{\dist(x_0,\partial \Omega)}{|x_0-\bar{x}(\cu)|^d} \size(\cu)^{d-1},
\end{multline}
for any $\sigma>0$, with a constant $C_\sigma$ depending on $\sigma$.
\end{remark}

\section{Estimate of the boundary integral}
\label{s.estimates}

In the previous section, we encountered the following function, which represents the error in homogenization at a point $x_0\in \Omega \setminus \Gamma_\ep$:
\begin{equation}
\label{e.error}
E_\ep(x_0):= \sum_{\cu \in \P_\ep} \left( \frac{\size^3(\cu)}{\ep^2} \wedge 1 \right) \frac{\dist(x_0,\partial \Omega)}{|x_0-\bar{x}(\cu)|^d} \size(\cu)^{d-1},
\end{equation}
where $\Gamma_\ep$ denotes the boundary layer 
\begin{equation*}
\Gamma_\ep : = \Omega \cap \left( \bigcup_{\cu\in \P_\ep} 5\cu \right).
\end{equation*}
Here $\P_\ep$ is the collection of triadic cubes given by Proposition~\ref{p.CZdecomposition} for the function
\begin{equation*}
F(x):= \ep^{1-\delta} \A^{-1}(x),\quad x \in \partial \Omega,
\end{equation*}
$\delta\in\left(0,\frac1{50}\right)$ is a tiny, fixed exponent and~$\A(x)$ is the Diophantine constant for the unit vector~$n(x)$ which is normal to the boundary of~$\partial \Omega$ at~$x\in \partial \Omega$. Note that $\A$ is bounded from above by~1 and thus $F$ is bounded from below by a positive constant (namely $\ep^{1-\delta}$) and therefore Proposition~\ref{p.CZdecomposition} applies.

\smallskip

Observe that, by~\eqref{e.Diophest}, for every $n\in\N$ we have 
\begin{equation*}
\H^{d-1} \left( \left\{ x \in \partial \Omega \,:\, \ep^{1-\delta}\A^{-1} > 3^{n-2} \right\} \right) \leq C\left( \ep^{-(1-\delta)} 3^{n} \right)^{1-d} = C\ep^{(1-\delta)(d-1)} 3^{-n(d-1)}.
\end{equation*}
Therefore, applying Proposition~\ref{p.CZdecomposition}(iv) gives us that 
\begin{equation}
\label{e.CZinput2}
\# \!\left\{ \cu \in \P_\ep \,:\, \size(\cu) \geq 3^n \right\}
\leq C 3^{-2n(d-1)} \ep^{(1-\delta)(d-1)}. 
\end{equation}
It is easy to see this is equivalent to the statement that, for every $t>0$,
\begin{equation}
\label{e.sizewd-1}
\H^{d-1} \left(  \left\{ x\in \partial \Omega\,:\, \size(\cu_{\P_\ep}(x)) \geq t \right\} \right) \leq C \ep^{(1-\delta)(d-1)} t^{1-d}.
\end{equation}
In other words, $x\mapsto \size\left( \cu_{\P_\ep}(x)\right)$ belongs to $L^{d-1,\infty}(\partial\Omega)$ with the norm
\begin{equation} \label{e.sizewd-2}
\left\| \size(\cu_{\P_\ep}(\cdot)) \right\|_{L^{d-1,\infty}(\partial \Omega)} \leq C \ep^{1-\delta}.
\end{equation}
Notice that~\eqref{e.CZinput2} implies in particular that the largest cube in $P_\ep$ has size at most $C\ep^{\frac{1-\delta}2}$. Since $F$ is bounded below by $\ep^{1-\delta}$, we have 
\begin{equation}
\label{e.unifboundarylayer2}
\forall \cu\in\P_\ep, \quad c\ep^{1-\delta} \leq \size(\cu) \leq C\ep^{\frac{1-\delta}2}. 
\end{equation}
From these estimates and interpolation we see that (even in $d=2$)
\begin{equation}
\left\| \size(\cu_{\P_\ep}(\cdot)) \right\|_{L^{1}(\partial \Omega)} \leq C \ep^{1-2\delta}
\end{equation}
which implies
\begin{equation}
\label{e.gammaep}
\left| \Gamma_\ep \right|
 \leq C\ep^{1-2\delta}.
\end{equation}
The main purpose of this section is to estimate the $L^q$ norms of $E_\ep$ outside of the boundary layer $\Gamma_\ep$. 

\begin{lemma}
\label{l.layeryourcake}
We have
\begin{equation}
\label{e.Eepbounds}
\left\| E_\ep \right\|_{L^q(\Omega\setminus \Gamma_\ep)}^q \leq   C \cdot \left\{ 
 \begin{aligned}
  &  \ep^{\frac13 -2\delta} & \mbox{in} & \ d= 2,  \ q\in \left[ 1,\infty\right],  \\
 &  \ep^{ \frac23 -3\delta } & \mbox{in} & \ d= 3,  \ q\in \left[ 1,\infty\right],  \\
 &  \ep^{1-7\delta} & \mbox{in} & \ d\geq 4, \ q\in \left[ 1,\frac{d-1}3 \right], \\
 &  \ep^{\frac{d-1}{3}(1-7\delta)} & \mbox{in} & \ d\geq 4, \ q\in \left[ \frac{d-1}3,\infty \right].
  \end{aligned} 
 \right.
\end{equation}
\end{lemma}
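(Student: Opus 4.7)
The plan is to dominate $E_\ep(x_0)$ pointwise by a Poisson-type integral of a bounded function on $\partial\Omega$, then exploit the standard $L^q$ boundedness of such integrals, and finally compute the boundary $L^q$-norm of that bounded function directly from the weak-type estimate~\eqref{e.sizewd-1}. Define
\begin{equation*}
f(x):=\frac{\size^3(\cu_{\P_\ep}(x))}{\ep^2}\wedge 1,\qquad x\in\partial\Omega,
\end{equation*}
which is constant on each cell $\partial\Omega\cap\cu$, $\cu\in\P_\ep$.

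First I would establish the pointwise bound
\begin{equation*}
E_\ep(x_0)\leq C\int_{\partial\Omega} f(x)\,\frac{\dist(x_0,\partial\Omega)}{|x_0-x|^d}\,d\H^{d-1}(x)=:C\,Tf(x_0),\qquad x_0\in\Omega\setminus\Gamma_\ep.
\end{equation*}
Smoothness of $\Omega$ gives $\H^{d-1}(\partial\Omega\cap\cu)\asymp\size(\cu)^{d-1}$, while the exclusion $x_0\notin\Gamma_\ep$ ensures that $x_0$ stays macroscopically far from each cube relative to $\size(\cu)$, from which $|x_0-\bar{x}(\cu)|\asymp|x_0-x|$ uniformly in $x\in\partial\Omega\cap\cu$ (with $d$-dependent constants). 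Next I would show that $T$ is bounded from $L^q(\partial\Omega)$ to $L^q(\Omega)$ for every $q\in[1,\infty]$: after locally flattening $\partial\Omega$, one verifies directly that $\sup_{x\in\partial\Omega}\int_\Omega K(x_0,x)\,dx_0\leq C$ and $\sup_{x_0\in\Omega}\int_{\partial\Omega}K(x_0,x)\,d\H^{d-1}(x)\leq C$, giving the endpoints $L^1\to L^1$ (via Fubini) and $L^\infty\to L^\infty$; Marcinkiewicz interpolation fills in the rest.

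It then remains to estimate $\|f\|_{L^q(\partial\Omega)}^q$. The change of variables $s=(t\ep^2)^{1/3}$ in~\eqref{e.sizewd-1} yields
\begin{equation*}
\H^{d-1}\{f>t\}\leq C\min\bigl(1,\,\ep^{(d-1)(1/3-\delta)}\,t^{-(d-1)/3}\bigr),\qquad 0<t\leq 1,
\end{equation*}
with crossover at $t_\star\asymp\ep^{1-3\delta}$. Plugging this into $\|f\|_{L^q}^q=q\int_0^1 t^{q-1}\H^{d-1}\{f>t\}\,dt$ and splitting the integral at $t_\star$ produces three regimes according to the sign of $q-(d-1)/3$: subcritical ($q<(d-1)/3$) gives $\|f\|_{L^q}^q\leq C\ep^{q(1-3\delta)}$; supercritical ($q>(d-1)/3$) gives $\|f\|_{L^q}^q\leq C\ep^{(d-1)(1-3\delta)/3}$; and at the critical value $q=(d-1)/3$ a factor $|\log\ep|$ appears which is absorbed into an extra $\ep^{-\delta}$. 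In $d=2,3$ the threshold $(d-1)/3<1$ places every $q\geq 1$ in the supercritical regime, delivering the exponents $\tfrac{1}{3}$ and $\tfrac{2}{3}$. In $d\geq 4$ both regimes are present and matching them against the claimed (deliberately loose in $\delta$) exponents concludes~\eqref{e.Eepbounds}.

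The main obstacle is Step~1: the pointwise domination of $E_\ep$ by a true Poisson integral requires a genuine geometric separation between $x_0$ and each cube $\cu\in\P_\ep$, which is exactly what excluding the boundary layer $\Gamma_\ep$ provides (after possibly enlarging the dilation factor in the definition of $\Gamma_\ep$ by a $d$-dependent constant that gets absorbed into $C$). Step~2 is standard harmonic analysis, and Step~3 is bookkeeping, the only subtlety being the absorption of the logarithmic factor at the critical exponent into the small $\delta$-loss.
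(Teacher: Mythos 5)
Your proposal is correct and reaches the stated exponents (in fact with slightly smaller losses in $\delta$, which is harmless since the lemma's exponents are deliberately loose). The first and third steps coincide with the paper's: the pointwise domination of $E_\ep(x_0)$ by the Poisson-type integral of $f=\ep^{-2}\size^3(\cu_{\P_\ep}(\cdot))\wedge 1$ is exactly the paper's estimate~\eqref{e.layercakejack} (and, as you note, requires $x_0\notin\Gamma_\ep$ together with properties (ii) and (v) of Proposition~\ref{p.CZdecomposition} to compare $|x_0-\bar x(\cu)|$ with $|x_0-x|$ and $\size(\cu)^{d-1}$ with $\H^{d-1}(\partial\Omega\cap 2\cu)$), and the layer-cake computation of $\|f\|_{L^q(\partial\Omega)}^q$ from~\eqref{e.sizewd-1}, with the crossover at $t_\star\asymp\ep^{1-3\delta}$ and the logarithm at the critical exponent $q=\frac{d-1}{3}$ absorbed into a $\delta$-loss, is the paper's proof of~\eqref{e.Gepbounds}. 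Where you genuinely diverge is the middle step: the paper transfers the bound from $\partial\Omega$ to $\Omega\setminus\Gamma_\ep$ by decomposing the kernel over dyadic annuli $B_{2^{m+1}r}(x_0)\setminus B_{2^mr}(x_0)$, applying Jensen on each annulus, summing (which costs a factor $|\log r|^q$, later absorbed by sacrificing $\delta$), and then integrating over the level sets $\partial\Omega_r$ by the coarea formula; you instead observe that the operator with kernel $\dist(x_0,\partial\Omega)|x_0-x|^{-d}$ satisfies both Schur conditions and is therefore bounded on $L^q$ for every $q\in[1,\infty]$. Your route is cleaner and avoids the spurious logarithm in this step entirely; the paper's route is more hands-on but proves the same transfer. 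Both are standard and correct, so this is a legitimate alternative proof rather than a gap.
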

\begin{proof}
We begin by rewriting $E_\ep(x_0)$ in the following way. We take $\Gamma_\ep$ to be the boundary layer given by
\begin{equation*}
\Gamma_\ep:= \Omega \cap \left( \bigcup_{\cu\in \P_\ep} 2\cu \right). 
\end{equation*}
Then for each $x_0\in \Omega \setminus \Gamma_\ep$ and $\cu\in \P_\ep$, we have
\begin{equation*}
\max_{x\in \partial \Omega \cap \cu} |x_0-x| \leq C \min_{x\in   \partial \Omega \cap \cu} |x_0-x|.
\end{equation*}
Thus we have, for each $x_0\in \partial \Omega_r \setminus \Gamma_\ep$,
\begin{align}
\label{e.layercakejack}
E_\ep(x_0) 
& = \sum_{\cu \in \P_\ep} \left( \frac{\size^3(\cu)}{\ep^2}\wedge 1 \right) \frac{\dist(x_0,\partial \Omega)}{|x_0-\bar{x}(\cu)|^d} \size(\cu)^{d-1}
\\
& \leq Cr\ep^{-2} \int_{\partial \Omega} \left( \size^3(\cu_{\P_\ep}(x))\wedge \ep^2 \right) \frac{1}{|x_0-x|^d}\,d\H^{d-1}(x) \notag \\
& \leq C  \sum_{m=0}^{\lceil C|\log r| \rceil} (2^mr)^{1-d}\int_{\partial \Omega \cap B_{2^{m+1}r}(x_0)} \left( \frac{\size^3(\cu_{\P_\ep}(x))}{\ep^2}\wedge 1 \right) \,d\H^{d-1}(x). \notag
\end{align}
Denote the $m$-th summand by
\begin{equation*}
E_{\ep,r,m}(x_0):= (2^mr)^{1-d}\ep^{-2}\int_{\partial \Omega \cap B_{2^{m+1}r}(x_0)} \left( \size^3(\cu_{\P_\ep}(x))\wedge \ep^2 \right) \,d\H^{d-1}(x).
\end{equation*}
We proceed by estimating, for each fixed $m\in\N$, the $L^q$ norm of each of the functions $E_{\ep,r,m}$ for each $q\in [1,\infty)$. We compute 
\begin{align}
\label{e.Eermsetup}
\lefteqn{
\int_{\partial \Omega_r} \left| E_{\ep,r,m}(x_0) \right|^q \,d\H^{d-1}(x_0)
} \qquad & \\ \notag
& \leq C(2^mr)^{1-d} \int_{\partial \Omega_r} \int_{\partial\Omega \cap B_{2^{m+1}}(x_0)}  \left( \frac{\size^3(\cu_{\P_\ep}(x))}{\ep^2} \wedge 1 \right)^q\,d\H^{d-1}(x)\,d\H^{d-1}(x_0) \\ \notag
& \leq C \int_{\partial \Omega} \left( \frac{\size^3(\cu_{\P_\ep}(x))}{\ep^2} \wedge 1 \right)^q\,d\H^{d-1}(x).
\end{align}
The estimate of the integral on the right side is now split into cases depending on the exponent~$q$ and the dimension~$d$. For convenience, denote
\begin{equation*}
G_\ep(x):= \size^3(\cu_{\P_\ep}(x)) \wedge {\ep^2}, \quad x\in\partial\Omega.
\end{equation*}
The claim is that  
\begin{equation}
\label{e.Gepbounds}
\ep^{-2}\left\| G_\ep \right\|_{L^q(\partial \Omega)}^q 
 \leq  \left\{ 
 \begin{aligned}
  & C \ep^{\frac13 -\delta} & \mbox{in} & \ d= 2,  \ q\in \left[ 1,\infty\right],  \\
 & C \ep^{\frac23 -2\delta} & \mbox{in} & \ d= 3,  \ q\in \left[ 1,\infty\right],  \\
 & C \ep^{1-6\delta} & \mbox{in} & \ d\geq 4, \ q\in \left[ 1,\frac{d-1}3 \right], \\
 & C \ep^{\frac{d-1}{3}(1-6\delta)} & \mbox{in} & \ d\geq 4, \ q\in \left[ \frac{d-1}3,\infty \right].
  \end{aligned} 
 \right.
\end{equation}
Using~\eqref{e.CZinput2} and~\eqref{e.unifboundarylayer2}, we compute to obtain, for $d\geq 4$,
\begin{align*}
\left\| G_\ep \right\|_{L^{\frac{d-1}3}(\partial \Omega)}^{\frac{d-1}3}
& = \int_{0}^{\ep^2} t^{\frac{d-1}3-1}\H^{d-1} \left( \left\{ x\in \partial \Omega\,:\, G_\ep(x) \geq t \right\} \right) \,dt \\
& = \int_{c\ep^3}^{\ep^2} t^{\frac{d-1}3-1} \H^{d-1} \left( \left\{ x\in \partial \Omega \,:\, \size(\cu_{\P_\ep} (x))^3 > t \right\} \right) \,dt \\
& \leq C\ep^{(1-\delta)(d-1)} \int_{c\ep^3}^{\ep^2} t^{-1} \,dt \\
& = C \ep^{(1-\delta)(d-1)} \left| \log\ep \right| \\
& \leq C \ep^{(1-2\delta)(d-1)}.
\end{align*}
Thus 
\begin{equation*}
\left\| G_\ep \right\|_{L^{\frac{d-1}3}(\partial \Omega)} \leq C \ep^{3(1-2\delta)}.
\end{equation*}
This yields the third line of~\eqref{e.Gepbounds}. 
Since we trivially have the bound 
\begin{equation}
\label{e.Gepinfty}
\left\| G_\ep \right\|_{L^\infty(\partial\Omega)} \leq \ep^2,
\end{equation}
interpolation gives the last line of~\eqref{e.Gepbounds}. 
In dimension $d=3$, we have 
\begin{align*}
\left\| G_\ep \right\|_{L^{1}(\partial \Omega)}
& = \int_{0}^{\ep^2} \H^{d-1} \left( \left\{ x\in \partial \Omega \,:\,  \size(\cu_{\P_\ep} (x))^3 > t \right\} \right) \,dt \\
& \leq C\ep^{2(1-\delta)} \int_{0}^{\ep^2} t^{-\frac23} \,dt \\
& = C \ep^{\frac83-2\delta},
\end{align*}
while, in dimension $d=2$, a similar computation gives
\begin{align*}
\left\| G_\ep \right\|_{L^{1}(\partial \Omega)}
& = \int_{0}^{\ep^2} \H^{d-1} \left( \left\{ x\in \partial \Omega \,:\,  \size(\cu_{\P_\ep} (x))^3 > t \right\} \right) \,dt \\
& \leq C\ep^{1-\delta} \int_{0}^{\ep^2} t^{-\frac13} \,dt \\
& = C \ep^{\frac 73-\delta}.
\end{align*}
The previous two displays, interpolation and~\eqref{e.Gepinfty} give us the first two lines of~\eqref{e.Gepbounds} and completes the demonstration of~\eqref{e.Gepbounds}.

\smallskip

Combining~\eqref{e.layercakejack},~\eqref{e.Eermsetup} and~\eqref{e.Gepbounds}, we obtain
\begin{equation}
\label{e.Eepboundsp}
\left\| E_\ep \right\|_{L^q(\partial\Omega_r\setminus \Gamma_\ep)}^q \leq C \left| \log r\right|^q  \cdot  \left\{ 
 \begin{aligned}
  &  \ep^{\frac13 -\delta} & \mbox{in} & \ d= 2,  \ q\in \left[ 1,\infty\right],  \\
 &  \ep^{ \frac23 -2\delta } & \mbox{in} & \ d= 3,  \ q\in \left[ 1,\infty\right],  \\
 &  \ep^{1-6\delta} & \mbox{in} & \ d\geq 4, \ q\in \left[ 1,\frac{d-1}3 \right], \\
 &  \ep^{\frac{d-1}{3}(1-6\delta)} & \mbox{in} & \ d\geq 4, \ q\in \left[ \frac{d-1}3,\infty \right].
  \end{aligned} 
 \right.
\end{equation}
Since $\ep \leq r \leq \diam(\Omega) \leq C$, we replace $\left|\log r\right|$ by~$\left|\log \ep\right|$ and then discard the logarithm by giving up some of the exponent to obtain
\begin{equation}
\label{e.Eepboundsp2}
\left\| E_\ep \right\|_{L^q(\partial\Omega_r\setminus \Gamma_\ep)}^q \leq C \cdot \left\{ 
 \begin{aligned}
  &  \ep^{\frac13 -2\delta} & \mbox{in} & \ d= 2,  \ q\in \left[ 1,\infty\right],  \\
 &  \ep^{ \frac23 -3\delta } & \mbox{in} & \ d= 3,  \ q\in \left[ 1,\infty\right],  \\
 &  \ep^{1-7\delta} & \mbox{in} & \ d\geq 4, \ q\in \left[ 1,\frac{d-1}3 \right], \\
 &  \ep^{\frac{d-1}{3}(1-7\delta)} & \mbox{in} & \ d\geq 4, \ q\in \left[ \frac{d-1}3,\infty \right].
  \end{aligned} 
 \right.
\end{equation}
Integrating over all $\ep \leq r \leq \diam(\Omega)$ gives us~\eqref{e.Eepbounds}.
\end{proof}

We now give the proof of the main result. 

\begin{proof}[{Proof of Theorem~\ref{t.main}}]
Fix $q\in [2,\infty]$. Using~\eqref{e.gammaep} and the Agmon-type $L^\infty$ bounds for~$u^\ep$ given in~\cite[Theorem 3(ii)]{AL1}), we have
\begin{equation*}
\left\| u^\ep - u \right\|_{L^q( \Gamma_\ep)}^q \leq \left| \Gamma_\ep \right| \left( \left\| u^\ep \right\|_{L^\infty(\Omega)} +  \left\| u \right\|_{L^\infty(\Omega)}\right)^q \leq C \ep^{1-2\delta}.
\end{equation*}
According to Proposition~\ref{p.goalest} and Lemma~\ref{l.layeryourcake}, we have that, for every $q\in [2,\infty)$, 
\begin{align*}
\left\| u^\ep - u \right\|_{L^q(\Omega\setminus \Gamma_\ep)}^q 
 \leq C\ep^{\frac q2}+ C \left\| E_\ep \right\|^q_{L^q(\Omega\setminus \Gamma_\ep)} 
\leq C\ep^{-10\delta} \cdot \left\{ 
 \begin{aligned}
  &  \ep^{\frac13} & \mbox{in} & \ d= 2, \\
 &  \ep^{ \frac23} & \mbox{in} & \ d= 3, \\
 &  \ep^{1} & \mbox{in} & \ d\geq 4. \\
  \end{aligned} 
 \right.
\end{align*}
Adding the previous two displays and shrinking $\delta$ gives the theorem.
\end{proof}

\smallskip

\begin{remark}[On the optimal exponents in $d=2$ and $3$]
With the remark of Zhongwei Shen, the error term becomes 
\begin{equation}
\widetilde{E}_\ep(x_0):= C_\sigma\sum_{\cu \in \P_\ep} \left( \frac{\size^{2+\sigma}(\cu)}{\ep^{1+\sigma}} \wedge 1 \right) \frac{\dist(x_0,\partial \Omega)}{|x_0-\bar{x}(\cu)|^d} \size(\cu)^{d-1},
\end{equation}
with $C_\sigma$ a constant depending on $\sigma\in(0,1)$. It is now clear from the above computations, modified to deal with the error term $\widetilde{E}_\ep(x_0)$ instead of $E_\ep(x_0)$, that we can get the rates stated in \eqref{e.optrateslowd} in dimensions $d=2$ and $3$.
\end{remark}

\smallskip

\noindent \textbf{Funding and conflict of interest.} The second author was supported by the Academy of Finland project \#258000. The fourth author was partially funded by a PEPS ``Jeune Chercheur'' project of the French CNRS. The authors declare that they have no conflict of interest. 

\smallskip

\noindent \textbf{Acknowledgement.} The authors would like to thank Zhongwei Shen for kindly pointing out to us, after the first version of this paper appeared, that our method also gives the optimal convergence rates in dimensions $d=2$ and $3$.

\small
\bibliographystyle{abbrv}
\bibliography{boundarylayers}

\end{document}